\title{Reflect-Push Methods Part I: Two Dimensional Techniques}
\author{Nikola Kuzmanovski \and Jamie Radcliffe}
\date{\vspace{-5ex}}
\begin{document}
\maketitle
\begin{abstract}
	We determine all maximum weight downsets in the product of two chains, 
	where the weight function is a strictly increasing function of the rank.
	Many discrete isoperimetric problems can be reduced to the maximum weight downset problem.
	Our results generalize Lindsay's edge-isoperimetric theorem in two dimensions in several directions.
	They also imply and strengthen (in several directions) a result of Ahlswede and Katona concerning graphs with maximal number of adjacent pairs of edges.
	We find all optimal shifted graphs in the Ahlswede-Katona problem.
	Furthermore, the results of Ahlswede-Katona are extended to posets with a rank increasing and rank constant weight function. 
	Our results also strengthen a special case of a recent result by Keough and Radcliffe concerning graphs with the fewest matchings.
	All of these results are achieved by applications of a key lemma that we call the reflect-push method.
	This method is geometric and combinatorial.
	Most of the literature on edge-isoperimetric inequalities focuses on finding a solution, and there are no general methods for finding all possible solutions.
	Our results give a general approach for finding all compressed solutions for the above edge-isoperimetric problems.
	
	By using the Ahlswede-Cai local-global principle, one can conclude that lexicographic solutions are optimal for many cases of higher dimensional isoperimetric problems.
	With this and our two dimensional results we can prove Lindsay's edge-isoperimetric inequality in any dimension.
	Furthermore, our results show that lexicographic solutions are the unique solutions for which compression techniques can be applied in this general setting.
\end{abstract}

\section{Introduction}\label{intro}

Isoperimetric problems on graphs have been considered for their own sake and also for a variety of applications.
One of the earliest such result is the edge-isoperimetric inequality on hypercubes that was discovered 
by Harper \cite{HARPERLH1964OAoN}, Lindsay \cite{LINDSEYJH1964AoNt} and Bernstein \cite{BernsteinA.J.1967MCAo}, 
and it has been rediscovered many times \cite{ClementsG.F.1971SoLP,HartSergiu1976Anot, Kletiman}.
It says that the maximal number of edges contained in a set of $m$ vertices in a hypercube is achieved by the first $m$ vertices in the lexicographic order.
This problem is equivalent to finding a set of $m$ vertices that minimizes the number of boundary edges.
It turns out that these two classical edge-isoperimetric problems are equivalent for any regular graph.
Viewing hypercubes as the $d$-fold Cartesian products of an edge, one can ask the same question in the product of cliques.
This problem was solved by Lindsay \cite{LINDSEYJH1964AoNt} and lexicographic sets are solutions to it.

The graphs mentioned so far have several nice properties. 
The lexicographic sets that are solutions to the edge-isoperimetric problems are nested.
These graphs are Cartesian products of smaller graphs which enables us to use pushing-down compression techniques.
One can also study discrete isoperimetric problems in graphs that are not Cartesian products or don't have nested solutions.
The earliest known case to the authors is a result by Ahlswede and Katona in \cite{Ahlswede1978GraphsWM}.
Ahlswede and Katona showed that the lex and/or colex graphs have the most pairs of adjacent edges,
among all graphs with a fixed number of vertices and edges.
Equivalently, the lex and/or colex graphs contain the fewest matchings of size $2$.
Keough and Radcliffe extended the results of Ahlswede and Katona to minimizing the total number of matchings and minimizing the number of $k$-matchings.
Again, the lex and colex graph are solutions to these matchings problem.

The problem of Ahlswede and Katona is just an edge-isoperimetric problem in disguise.
Consider the Johnson graph $J(n,d)$, which has vertex set $\binom{[n]}{d}$ and two vertices (i.e. $d$-sets) are adjacent if they have an intersection of size $d-1$.
The Ahlswede and Katona problem is equivalent to the edge-isoperimetric problem on $J(n,2)$.
Note that it doesn't matter which edge-isoperimetric problem is considered because $J(n,d)$ is regular.

The reflect-push method, presented in a general form in Lemma \ref{reflect-push}, 
gives a unified geometric way to handle the edge-isoperimetric problems for all the above mentioned two dimensional graphs.
Furthermore, we can get much stronger results and figure out all compressed optimal downsets, see Theorem \ref{rectangleProof}, Corollary \ref{fullRectangleProof} and Theorem \ref{triangleProof}.
It turns out that every optimal set is an initial segment of the lexicographic or colexicographic order,
or a symmetrization (see Definition \ref{symmetrizationAnyPoset}) of an initial segment of one of these orders.
These results are more general and stated in terms of the maximal weight downset problem and capture all of these previous results as special cases.

We are mainly concerned with two dimensional techniques in this paper.
However, 
after finding all optimal downsets in the product of two chains with 
a rank constant and rank increasing weight function,
one discovers that the only nested solutions are lexicographic or colexicographic.
This immediately gives as corollary that there are lexicographic solutions in higher dimensions from the Ahlswede-Cai local-global principle \cite{AHLSWEDE1997479} (see Theorem \ref{lgbAC}).
Furthermore, this tells us that in the general setting there is a unique order that works with the pushing-down compression approach.
See Section \ref{appliations} for more details.

One might start to wonder if solutions to these types problems are always lexicographic.
This has been studied before and the answer is no.
The first authors to present such a result were Bollob\'as and Leader in \cite{Bollobs1991EdgeisoperimetricII}.
They solved the two classical edge-isoperimetric problems in grids of the form $(P_n)^d$.
Note that the induced edges problem and the boundary edges problem are not equivalent here because grids are not regular graphs.
It turns out that the boundary edges problem does not have nested solutions (for most $n$), while the induced edges problem has nested solutions that are not lexicographic.
The solution to induced edges problem of Bollob\'as and Leader was later generalized by Ahlswede and Bezrukov in \cite{AHLSWEDE199575}, 
for the arbitrary product of trees, but a different method was used along lines of local-global principles.
The solutions are again nested and follow the same type of order as in the grid.
More recently, Bezrukov, Das and Elsässer \cite{BezrukovS.L.2000AEPf} proved that all the powers of the Petersen graph have nested solutions which are not lexicographic.
Using those ideas, Carlson \cite{CarlsonThomasA.2002Tepf} proved that powers of $C_5$ have nested solutions which are again not lexicographic.

There is an important connection between edge-isoperimetric problems and intersection problems.
For this, we introduce a generalization of the Johnson graphs.
The intersection graph $I(n,d,t)$ has vertex set $\binom{[n]}{d}$ and two vertices are adjacent if their intersection is at least $t$.
The maximal clique problem on $I(n,d,1)$ is equivalent to the problem of finding a maximal intersecting family among sets of size $d$.
An answer to this question is provided by the Erd\"os-Ko-Rado Theorem first presented in \cite{EKR}.
The Complete Intersection Theorem of Ahlswede and Khachatrian in \cite{AKComplete, AKPushPull}, determines all maximal cliques in $I(n,d,t)$.
A solution to the edge-isoperimetric problem in $I(n,d,t)$ would be a generalization of the Complete Intersection Theorem.
There has been a lot of interest in this area
\cite{ACCounter, ACHyper, Ahlswede1978GraphsWM, Balogh, AhlswedeBookBey, BEKneser:2003, Brown-KramerJoshua2010Fssw,DGS, Harper1991OnAP, Katz1971RearrangementsO}.
However, very little is known in the case $d>2$.
There are only partial results and nobody has been able to determine an optimal vertex set for each size.
Our results find all shifted optimal sets for $d=2$ and this could give ideas to what type of behavior can occur for $d>3$.

The paper is organized as follows.
Section \ref{notation} introduces notation and terminology.
In Section \ref{rectangles} we find all possible optimal downsets in the product of two chains when the weight function is rank constant and strictly increasing by rank.
We call the products of two chains rectangles. 
Section \ref{rectanglesExact} contains results that completely determines when a certain potentially optimal set (under the conditions from the results in section \ref{rectangles}), 
is optimal.
Section \ref{triangles} of the paper examines posets obtained from the product of two chains, by only keeping the points $(x,y)$ with $x\leq y$.
We call these posets right triangles.
The results obtained about rectangles are used to easily determine all optimal downsets in right triangles.
In Section \ref{appliations} we discuss applications of these results.
We conclude in Section \ref{remarks} with some remarks and open problems.

\section{Notation and Terminology}\label{notation}

\begin{dfn}[Natural Numbers]\label{intervals}
	The set $\N$ includes $0$.
	For $n\in \N$ we define $[n]=\{1,\dots, n\}$ and $[n]_0 = \{0,\dots , n-1\}$.
	We also define $[\infty] = \N \setminus \{0\}$.
\end{dfn}

\begin{dfn}[Downsets/Ideals]\label{downsets}
	For a poset $\mathscr{P}$ and set $A\subseteq \mathscr{P}$, 
	we say that $A$ is a \textit{downset with respect to} $\mathscr{P}$ iff $a\in \mathscr{P}$ and $b\in A$ such that $a\leq b$, then $a\in A$.
	A downset is often called an \textit{ideal} in the literature.
\end{dfn}

\begin{dfn}[Weight Functions]\label{weightFunctions}
	Consider a poset $\mathscr{P}$ and a function $\wt: \mathscr{P} \rightarrow \R$.
	We say that $\wt$ is a \textit{weight function on} $\mathscr{P}$ and that $(\mathscr{P}, \wt)$ is a \textit{weighted poset}.
	For $a\in \mathscr{P}$ we say that $\wt(a)$ is \textit{the weight of} $a$.
	For a finite $A\subseteq \mathscr{P}$ we define \textit{the weight of $A$} to be $\wt(A) = \sum_{a\in A} \wt(a)$. 
\end{dfn}

\begin{dfn}[Optimal Downsets]\label{optimalDownsets}
	Suppose that we have a weighted poset $(\mathscr{P}, \wt)$.
	We say that a downset $A\subseteq \mathscr{P}$ is \textit{optimal under} $\wt$ iff
	\begin{align*}
		\wt(A) = \max_{\substack{S\subseteq \mathscr{P}\\ |A|=|S|\\ S \text{ is a downset}}} \wt(S),
	\end{align*}
	and we also call $A$ a \textit{maximum weight downset} in this case.
\end{dfn}

\begin{prb}[Maximum Weight Downset/Ideal Problem]\label{maximumWeightIdealProblem}
	Suppose that we have a weighted poset $(\mathscr{P}, \wt)$.
	For every $m\in [|\mathscr{P}|+1]_0$ find an optimal set $A\subseteq \mathscr{P}$ such that $|A|=m$.
	
	Furthermore, if there exists a sequence $A_0\subseteq A_1\subseteq \cdots \subseteq A_{|\mathscr{P}|}$ such that $|A_i|=i$ and $A_i$ is optimal for all $i$,
	then we say that $\mathscr{P}$ has \textit{nested solutions} under the maximum weight downset/ideal problem.
\end{prb}

\begin{dfn}[Grid Points/Multisets]\label{multisets}
	Suppose that $S$ is a set.
	A \textit{grid point/multiset} on $S$ is a function $f:S \rightarrow \N$.
	If $S$ is finite then we define the weight/size of $f$ by
	\begin{align*}
		|f| = \sum_{x\in S} f(x),
	\end{align*}
	and we say that $f$ has dimension $|S|$.
\end{dfn}

\begin{dfn}[Grids/Restricted Multisets]\label{setOfMultisets}
	Suppose that we have a set $S= \{a_1,\dots, a_d\}$ and consider $\ell_1,\dots, \ell_d\in \N \cup \{\infty\}$,
	with $d\geq 1$.
	We define the \textit{grid/set of all multisets} $S$ and \textit{lengths/repetition limits} $(\ell_1,\dots, \ell_d)$ by
	\begin{align*}
		M_S(\ell_1,\dots, \ell_d) = \{f:S\rightarrow \N \bigm | f(a_1)< \ell_1,\dots, f(a_d)< \ell_d\}.
	\end{align*}
\end{dfn}

\begin{dfn}[Grid Lattice/Lattice of Multisets]\label{latticeOfMultisets}
	Suppose that $f,g$ are multisets on $S$ with $|S|=d\geq 1$ and consider $\ell_1,\dots, \ell_d\in \N \cup \{\infty\}$.
	We say that $f$ is a  \textit{submultiset} of $g$ and write $f\subseteq g$ if and only if for all $x\in S$ we have $f(x) \leq g(x)$.
	Then $\subseteq$ is a partial order on $M_S(\ell_1,\dots, \ell_d)$.
	The \textit{lattice of multisets of dimension} $d$ \textit{on} $S$ \textit{and lengths} $(\ell_1,\dots, \ell_d)$ is the poset 
	\begin{align*}
		\mathscr{M}_S(\ell_1,\dots, \ell_d) = (M_S(\ell_1,\dots, \ell_d), \subseteq).
	\end{align*}
\end{dfn}

\begin{dfn}[Shadows]\label{downShadowsDef}
	Suppose that $\mathscr{P}$ is a poset and $a,b\in \mathscr{P}$.
	We say that $a$ \textit{covers} $b$ if and only if $b < a$ and there is no $x\in \mathscr{P}$ such that $b < x < a$.
	Also, we say that $b$ is a \textit{lower shadow point} of $a$ if $a$ covers $b$, and we say that $b$ is an \textit{upper shadow point} of $a$ if $b$ covers $a$. 
	By $\dSdw_{\mathscr{P}}(a)$ and $\uSdw_{\mathscr{P}}(a)$ we denote the \textit{set of all lower shadow points of} $a$ in $\mathscr{P}$
	and the \textit{set of all upper shadow points of} $a$ in $\mathscr{P}$ respectively.
	The subscript $\mathscr{P}$ will often be omitted.
\end{dfn}

\begin{rem}
	All tosets (totally ordered sets) considered in this paper will be isomorphic to $[n]$ with the standard order, where $n\in \N \cup \{\infty\}$.
\end{rem}

\begin{dfn}[Indices, Elements and Intervals]\label{indicesElementsAndIntervals}
	Let $\mathscr{T}$ be a toset.
	Then there is a bijective function $\sigma: \mathscr{T} \rightarrow [n]$ such that for any $a,b\in \mathscr{T}$ we have $a\leq b$ iff $\sigma(a) \leq \sigma(b)$.
	For any $x,y\in \mathscr{T}$ with $x \leq y$ and any $p,q\in [n]$ with $p\leq q$ we define:
	\begin{enumerate}
		\item The \textit{index of} $x$, $\mathscr{T}(x) = \sigma(x)$.
		\item The \textit{element of} $p$, $\mathscr{T}^{-1}(p) = \sigma^{-1}(p)$.
		\item The \textit{interval between} $x$ \textit{and} $y$,
		\begin{align*}
			\mathscr{T}[x,y] &= \{a\in \mathscr{T} \bigm | x\leq a \leq y\}.
		\end{align*} 
		\item The \textit{interval between} $p$ \textit{and} $q$,
		\begin{align*}
			\mathscr{T}^{-1}[p,q] &= \mathscr{T}[\mathscr{T}^{-1}(p), \mathscr{T}^{-1}(q)].
		\end{align*}
	\end{enumerate}
	We call $\mathscr{T}^{-1}[q]$ the \textit{initial segment of size $q$ in} $\mathscr{T}$.
	We sometimes just use the total order of $\mathscr{T}$, which lets say we call $\mathcal{O}$,
	and we call $\mathscr{T}^{-1}[q]$ the \textit{initial segment of size $q$ of} $\mathcal{O}$.
\end{dfn}

\begin{prp}[Decomposition of Multiset Lattices]\label{decompositionOfMultisetLatices}
	Suppose that $d\in \N$ with $d\geq 1$
	Consider tosets $\mathscr{T}_1,\dots, \mathscr{T}_d$ and put $\mathscr{T}=\mathscr{T}_1\times \cdots \times \mathscr{T}_d$.
	One has,
	\begin{align*}
		\mathscr{T} \iso \mathscr{M}_{[d]}(|\mathscr{T}_1|,\dots, |\mathscr{T}_d|),
	\end{align*}
	where the isomorphism sends $(x_1,\dots, x_d)\in \mathscr{T}$ to $(\mathscr{T}(x_1)-1,\dots, \mathscr{T}(x_d)-1)$.
	We call this isomorphism the \textit{standard multiset isomorphism of} $\mathscr{T}$ and denote it by $\mu_{\mathscr{T}}$.
\end{prp}

\begin{dfn}[Standard Weights on Multisets]\label{standardWeightsOnMultisets}
	Suppose that $d\in \N$ with $d\geq 1$
	Consider tosets $\mathscr{T}_1,\dots, \mathscr{T}_d$ and put $\mathscr{T}=\mathscr{T}_1\times \cdots \times \mathscr{T}_d$.
	The \textit{standard weight function} on $\mathscr{T}$ is the weight function that for each $x\in \mathscr{T}$ assigns the weight $|\mu_{\mathscr{T}}(x)|$.
	We will often call the number $|\mu_{\mathscr{T}}(x)|$, the \textit{rank} of $x$.
\end{dfn}

\begin{dfn}[Cube Diagrams]\label{cubeDiagrams}
	Suppose that $\mathscr{T}_1,\dots, \mathscr{T}_d$ are tosets and put $\mathscr{T}=\mathscr{T}_1\times \dots \times \mathscr{T}_d$, 
	where $d\in \N$ is nonzero.
	Take $x=(x_1,\dots, x_d)\in \mathscr{T}$.
	Then we can consider $(\mathscr{T}_1(x_1) - 1,\dots, \mathscr{T}_d(x_d) - 1)\in \R^d$.
	For the point $(\mathscr{T}_1(x_1) - 1,\dots, \mathscr{T}_d(x_d) - 1)$ we consider the \textit{upwards cube of length $1$ in} $\R^d$,
	\begin{align*}
		\prod_{i=1}^d [\mathscr{T}_i(x_i),\mathscr{T}_i(x_i)+1]
	\end{align*}
	Thus, for each $x=(x_1,\dots, x_d)\in \mathscr{T}$ there is a unique upwards cube in $\R^d$.
	For a set $A\subseteq \mathscr{T}$ we call the collection of all upwards cubes from elements in $A$ the \textit{cube diagram of} $A$ in $\mathscr{T}$.
\end{dfn}

\begin{figure}
	\centering
	\begin{subfigure}[t]{0.23\textwidth}
		\includegraphics[width=\textwidth]{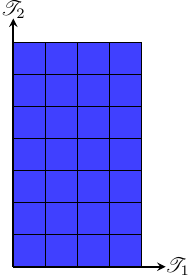}
		\caption{$\emptyset$.}
	\end{subfigure}
	\hfill
	\begin{subfigure}[t]{0.23\textwidth}
		\includegraphics[width=\textwidth]{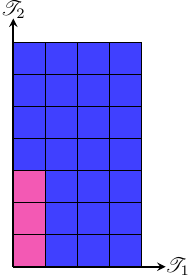}
		\caption{$\{(0,0),(0,1),(0,2)\}$.}
	\end{subfigure}
	\hfill
	\begin{subfigure}[t]{0.23\textwidth}
		\includegraphics[width=\textwidth]{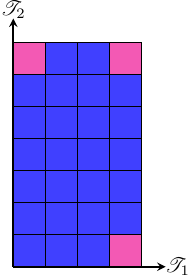}
		\caption{$\{(0,6),(3,6),(3,0)\}$.}
	\end{subfigure}
	\hfill
	\begin{subfigure}[t]{0.23\textwidth}
		\includegraphics[width=\textwidth]{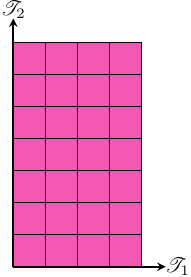}
		\caption{$\mathscr{M}_{[d]}(4,7)$.}
	\end{subfigure}

	\caption{Cube diagrams for some subsets in $\mathscr{M}_{[d]}(4,7)$.}
	\label{cubeDiagrams_4x7}
\end{figure}

\begin{figure}
	\centering
	\begin{subfigure}[t]{0.75\textwidth}
		\includegraphics[width=\textwidth]{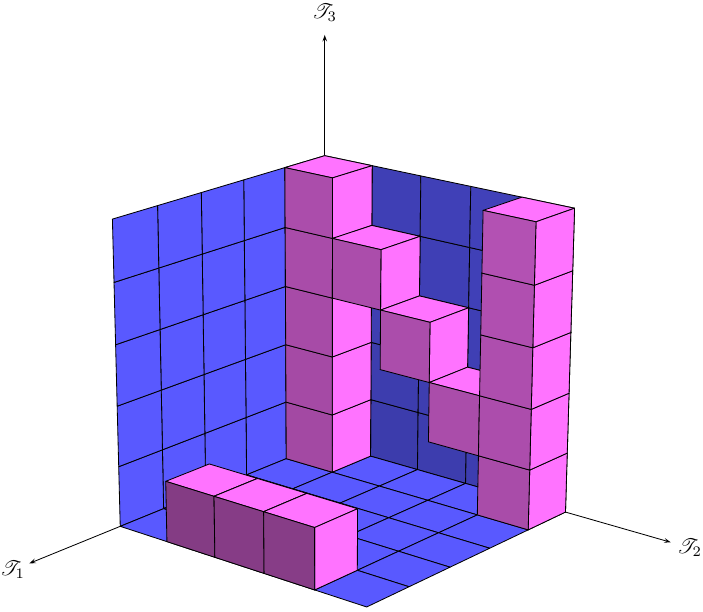}
	\end{subfigure}
	
	\begin{subfigure}[t]{0.46\textwidth}
		\includegraphics[width=\textwidth]{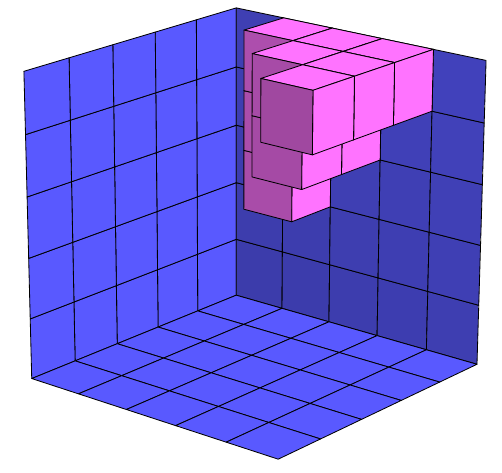}
	\end{subfigure}
	\hfill
	\begin{subfigure}[t]{0.46\textwidth}
		\includegraphics[width=\textwidth]{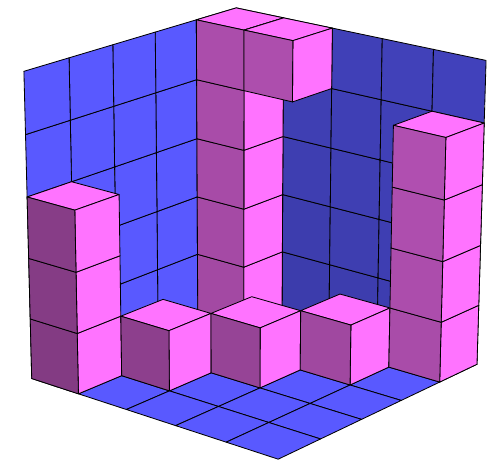}
	\end{subfigure}
	
	\caption{Cube diagrams of some sets in $\mathscr{M}_{[3]}(5,5,5)$.}\label{cubeDiagrams_5x5x5}
\end{figure}

\begin{exm}
	Some cube diagrams of sets in $\mathscr{M}_{[2]}(4,7)$ are shown in Figure \ref{cubeDiagrams_4x7}.
	Some cube diagrams of sets in $\mathscr{M}_{[3]}(5,5,5)$ are shown in Figure \ref{cubeDiagrams_5x5x5}.
\end{exm}

\begin{dfn}[Lexicographic Order]\label{lexicographicOrder}
	Suppose that we have tosets $\mathscr{T}_1, \dots , \mathscr{T}_d$ for $d\in \N$ with $d\geq 1$.
	Consider $\mathscr{T} = \mathscr{T}_1\times \cdots \times \mathscr{T}_d$ and $x=(x_1,\dots, x_d),y=(y_1,\dots, y_d)\in \mathscr{T}$.
	We define the \textit{lexicographic order} on $\mathscr{T}$, $\mathcal{L}_{\mathscr{T}}$ to be a total order on $\mathscr{T}$,
	such that $x<_{\mathcal{L}_{\mathscr{T}}} y$ 
	iff for some $i\in \{1,\dots, d-1\}$ we have $x_1=y_1,\dots, x_i=y_i$ and $x_{i+1} <_{\mathscr{T}_{i+1}} y_{i+1}$.
	We abuse notation and treat $\mathscr{T}$ as its ground set, and define the toset $\mathscr{T}_{\mathcal{L}}=(\mathscr{T}, \mathcal{L}_{\mathscr{T}})$.
\end{dfn}

\begin{figure}
	\centering
	\begin{subfigure}[t]{0.18\textwidth}
		\includegraphics[width=\textwidth]{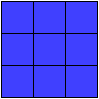}
	\end{subfigure}
	\hfill
	\begin{subfigure}[t]{0.18\textwidth}
		\includegraphics[width=\textwidth]{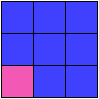}
	\end{subfigure}
	\hfill
	\begin{subfigure}[t]{0.18\textwidth}
		\includegraphics[width=\textwidth]{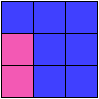}
	\end{subfigure}
	\hfill
	\begin{subfigure}[t]{0.18\textwidth}
		\includegraphics[width=\textwidth]{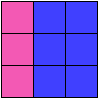}
	\end{subfigure}
	\hfill
	\begin{subfigure}[t]{0.18\textwidth}
		\includegraphics[width=\textwidth]{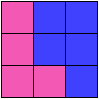}
	\end{subfigure}
	
	\vspace{0.5cm}
	
	\begin{subfigure}[t]{0.18\textwidth}
		\includegraphics[width=\textwidth]{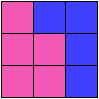}
	\end{subfigure}
	\hfill
	\begin{subfigure}[t]{0.18\textwidth}
		\includegraphics[width=\textwidth]{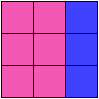}
	\end{subfigure}
	\hfill
	\begin{subfigure}[t]{0.18\textwidth}
		\includegraphics[width=\textwidth]{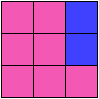}
	\end{subfigure}
	\hfill
	\begin{subfigure}[t]{0.18\textwidth}
		\includegraphics[width=\textwidth]{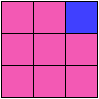}
	\end{subfigure}
	\hfill
	\begin{subfigure}[t]{0.18\textwidth}
		\includegraphics[width=\textwidth]{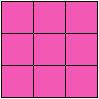}
	\end{subfigure}

	\caption{Lexicographic order in $\mathscr{M}_{[2]}(3,3)$.}\label{Lex_3x3}
\end{figure}

\begin{exm}
	A visualization of lexicographic order in $\mathscr{M}_{[2]}(3,3)$ can be seen in Figure \ref{Lex_3x3}.
\end{exm}

\begin{dfn}[Symmetric Group]\label{symmetricGroup}
	For $d\in \N$ with $d\geq 1$, by $\mathfrak{S}_d$ we denote the \textit{set of all permutations on} $[d]$.
\end{dfn}

\begin{dfn}[Domination Order]\label{dominationOrder}
	Suppose that $d\in \N$ with $d\geq 1$ and consider tosets $\mathscr{T}_1,\dots, \mathscr{T}_d$.
	Also, put $\mathscr{T}= \mathscr{T}_1\times \cdots \times \mathscr{T}_d$.
	For any $\pi \in \mathfrak{S}_d$ we define $\mathcal{D}_{\pi}$, the {\em domination order induced from $\pi$ on} $\mathscr{T}$,
	such that for any $x=(x_1,\dots, x_d),y=(y_1,\dots, y_d)\in \mathscr{T}$ we have
	\begin{align*}
		x \leq_{\mathcal{D}_\pi} y \text{ iff } (x_{\pi(1)},\dots, x_{\pi(d)}) \leq_{\mathscr{T}_{\mathcal{L}}} (y_{\pi(1)},\dots, y_{\pi(d)}).
	\end{align*}
	Then we define the toset $\mathscr{T}_{\pi} = (\mathscr{T}, \mathcal{D}_{\pi})$.
\end{dfn}

\begin{rem}
	The lexicographic order on a product of tosets is a domination order.
	It is the domination order induced by the identity permutation.
	Using the notation developed so far, the statement in the previous sentence becomes $\mathscr{T}_{\mathcal{L}} = \mathscr{T}_{\id_{[d]}}$.
\end{rem}

\begin{dfn}[Colexicographic Order]\label{colexicographicOrder}
	Suppose that $d\in \N$ with $d\geq 1$ and consider tosets $\mathscr{T}_1,\dots, \mathscr{T}_d$
	Also, put $\mathscr{T}= \mathscr{T}_1\times \cdots \times \mathscr{T}_d$.
	Let $\pi \in \mathfrak{S}_d$ be the permutation such that $\pi(i) = d-i+1$ for all $i\in \{1,\dots, d\}$.
	The \textit{colexicographic order} $\mathcal{C}_{\mathscr{T}}$ on $\mathscr{T}$ is defined to be the domination order $\mathcal{D}_{\pi}$.
	We also write $\mathscr{T}_{\mathcal{C}}$ for $\mathscr{T}_\pi$.
	Another way to view the colexicographic order is to take any $x=(x_1,\dots, x_d),y=(y_1,\dots, y_d)\in \mathscr{T}$ and define $x<_{\mathcal{C}_{\mathscr{T}}} y$ 
	iff for some $i\in \{2,\dots, d+1\}$ we have $x_d=y_d,\dots, x_i=y_i$ and $x_{i-1} <_{\mathscr{T}_{i+1}} y_{i-1}$.
\end{dfn}

\begin{figure}
	\centering
	\begin{subfigure}[t]{0.19\textwidth}
		\includegraphics[width=\textwidth]{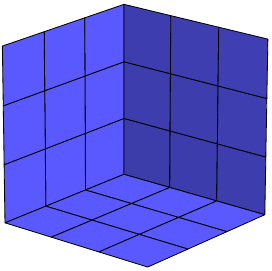}
	\end{subfigure}
	\hfill
	\begin{subfigure}[t]{0.19\textwidth}
		\includegraphics[width=\textwidth]{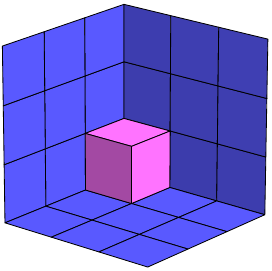}
	\end{subfigure}
	\hfill
	\begin{subfigure}[t]{0.19\textwidth}
		\includegraphics[width=\textwidth]{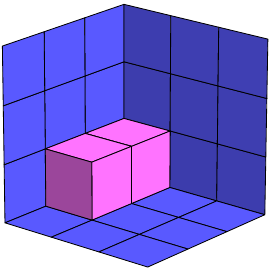}
	\end{subfigure}
	\hfill
	\begin{subfigure}[t]{0.19\textwidth}
		\includegraphics[width=\textwidth]{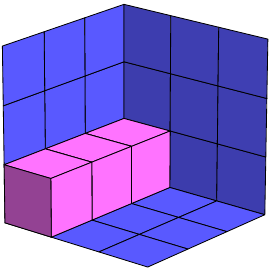}
	\end{subfigure}
	\hfill
	\begin{subfigure}[t]{0.19\textwidth}
		\includegraphics[width=\textwidth]{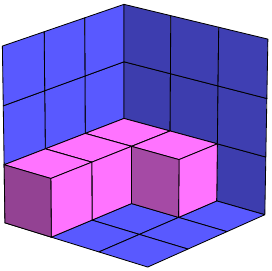}
	\end{subfigure}
	
	\vspace{0.1cm}
	
	\begin{subfigure}[t]{0.19\textwidth}
		\includegraphics[width=\textwidth]{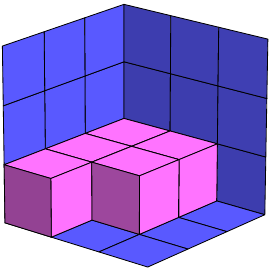}
	\end{subfigure}
	\hfill
	\begin{subfigure}[t]{0.19\textwidth}
		\includegraphics[width=\textwidth]{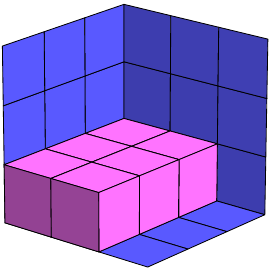}
	\end{subfigure}
	\hfill
	\begin{subfigure}[t]{0.19\textwidth}
		\includegraphics[width=\textwidth]{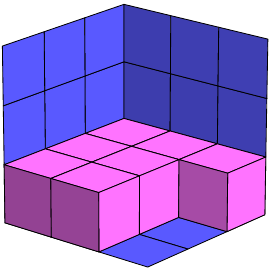}
	\end{subfigure}
	\hfill
	\begin{subfigure}[b]{0.19\textwidth}
		\centering
		\raisebox{0.5\hsize}{\includegraphics[width=0.5\textwidth]{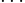}}
	\end{subfigure}
	\hfill
	\begin{subfigure}[t]{0.19\textwidth}
		\includegraphics[width=\textwidth]{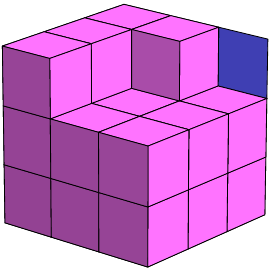}
	\end{subfigure}
	
	\vspace{0.1cm}
	
	\begin{subfigure}[t]{0.19\textwidth}
		\includegraphics[width=\textwidth]{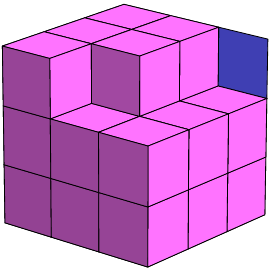}
	\end{subfigure}
	\hfill
	\begin{subfigure}[t]{0.19\textwidth}
		\includegraphics[width=\textwidth]{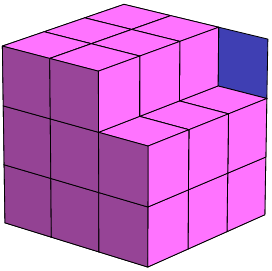}
	\end{subfigure}
	\hfill
	\begin{subfigure}[t]{0.19\textwidth}
		\includegraphics[width=\textwidth]{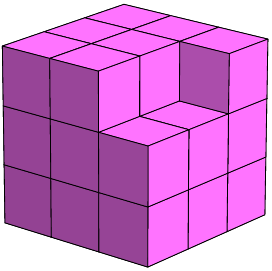}
	\end{subfigure}
	\hfill
	\begin{subfigure}[t]{0.19\textwidth}
		\includegraphics[width=\textwidth]{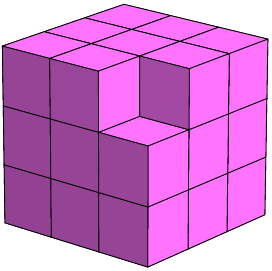}
	\end{subfigure}
	\hfill
	\begin{subfigure}[t]{0.19\textwidth}
		\includegraphics[width=\textwidth]{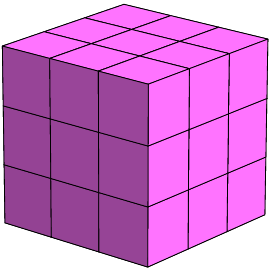}
	\end{subfigure}
	
	\caption{Colexicographic order in $\mathscr{M}_{[3]}(3,3,3)$.}
	\label{Colex_3x3x3}
\end{figure}

\begin{exm}
	A visualization of colexicographic order in $\mathscr{M}_{[3]}(3,3,3)$ can be seen in Figure \ref{Colex_3x3x3}.
\end{exm}

\section{All Optimal Downsets in Rectangles}\label{rectangles}

\begin{dfn}[Subproducts]\label{subproducts}
	Suppose that $d\geq 1$, 
	we have tosets $\mathscr{T}_1,\dots, \mathscr{T}_d$, 
	and put $\mathscr{T}=\mathscr{T}_1\times \cdots \times \mathscr{T}_d$.
	For a set of coordinates $S=\{p_1 < \dots < p_k \}\subseteq [d]$ we define the \textit{subproduct of} $\mathscr{T}$ \textit{under} $S$ by
	\begin{align*}
		\mathscr{T}_S = \mathscr{T}_{p_1} \times \cdots \times \mathscr{T}_{p_k},
	\end{align*} 
	where $\mathscr{T}_{\emptyset} = \emptyset$.
	We say that $\mathscr{T}_S$ \textit{has dimension} $k$.
	Let $\overline{S} = [d]\setminus S$ and we can write $\overline{S}=\{q_1 < \cdots < q_{d-k}\}$.
	For $x=(x_{q_1},\dots, x_{q_{d-k}})\in \mathscr{T}_{\overline{S}}$ we define the \textit{subproduct at} $x$ \textit{under} $S$ \textit{of} $\mathscr{T}$ by
	\begin{align*}
		\mathscr{T}_S(x) = \{(y_1,\dots, y_d)\in \mathscr{T} \bigm | y_a=x_a \text{ for all } a\in \overline{S}\}.
	\end{align*}
\end{dfn}

\begin{figure}
	\centering
	\begin{subfigure}[t]{0.3\textwidth}
		\includegraphics[width=\textwidth]{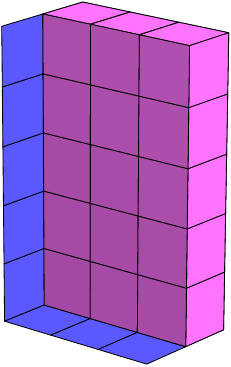}
		\caption{$(\mathscr{M}_{[3]}(2,3,5))_{\{2,3\}}(0)$.}
	\end{subfigure}
	\hfill
	\begin{subfigure}[t]{0.3\textwidth}
		\includegraphics[width=\textwidth]{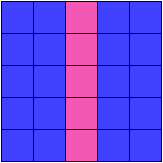}
		\caption{$(\mathscr{M}_{[2]}(5,5))_{\{2\}}(2)$.}
	\end{subfigure}
	\hfill
	\begin{subfigure}[t]{0.3\textwidth}
		\includegraphics[width=\textwidth]{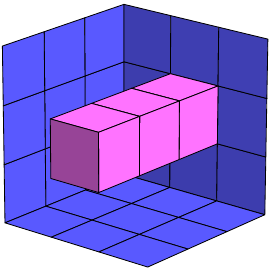}
		\caption{$(\mathscr{M}_{[3]}(3,3,3))_{\{1\}}((1,1))$.}
	\end{subfigure}
	
	\caption{Some subproducts of multiset lattices.}
	\label{subproductsExample}
\end{figure}

\begin{exm}
	Some example of subproducts can be seen in Figure \ref{subproductsExample}.
\end{exm}

\begin{dfn}[Reflections and Symmetrization]\label{reflectionsAndSymmetrization}
	Let $\mathscr{M} = \mathscr{M}_{[d]}(\ell_1,\dots, \ell_d)$ be a finite multiset lattice and consider coordinates $c_1,c_2\in [d]$ such that $\ell_{c_1} = \ell_{c_2}$.
	For $f\in \mathscr{M}$ we define the \textit{reflection of} $f$ \textit{about} $\{c_1,c_2\}$ to be $f_{\{c_1,c_2\}} \in \mathscr{M}$,
	such that $f_{\{c_1,c_2\}}(c_1) = f(c_2)$, $f_{\{c_1,c_2\}}(c_2) = f(c_1)$, 
	and for all $c\in [d]\setminus \{c_1,c_2\}$ we have $f_{\{c_1,c_2\}}(c) = f(c)$.
	Then for $f\in \mathscr{M}$ we define the \textit{symmetrization of} $f$ \textit{about} $(c_1,c_2)$ by $f_{(c_1,c_2)} \in \mathscr{M}$,
	such that $f_{(c_1,c_2)} = f_{\{c_1,c_2\}}$ if $f(c_1) < f(c_2)$, and $f_{(c_1,c_2)} = f$ if $f(c_1)\geq f(c_2)$. 
	Finally, for a set $A\subseteq \mathscr{M}$ the \textit{symmetrization of} $A$ \textit{about} $(c_1,c_2)$ \textit{in} $\mathscr{M}$ is
	\begin{align*}
		\Sym_{\mathscr{M}}(A,c_1,c_2) = \{f\in \mathscr{M}\bigm | f\in A \text{ and } f_{\{c_1,c_2\}}\in A \} 
										\cup \{f_{(c_1,c_2)}\in \mathscr{M} \bigm | f\in A\}.
	\end{align*}
\end{dfn}

\begin{exm}
	On Figure \ref{reflectionExample} one can see reflections and symmetrizations of single elements.
	On Figure \ref{symmetrizationExample} one can observe the symmetrization of an entire set.
\end{exm}

\begin{figure}
	\centering
	\begin{subfigure}[t]{0.3\textwidth}
		\includegraphics[width=\textwidth]{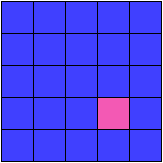}
		\caption{$(3,1) \in \mathscr{M}_{[2]}(5,5)$.}
	\end{subfigure}
	\begin{subfigure}[b]{0.2\textwidth}
		\centering
		\raisebox{0.75\hsize}{\includegraphics[width=\textwidth]{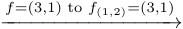}}
	\end{subfigure}
	\begin{subfigure}[t]{0.3\textwidth}
		\includegraphics[width=\textwidth]{section2/2D_Pictures/cube_diagram_5x5_reflection_1.pdf}
		\caption{$(3,1) \in \mathscr{M}_{[2]}(5,5)$.}
	\end{subfigure}
	
	\vspace{0.5cm}
	
	\begin{subfigure}[t]{0.3\textwidth}
		\includegraphics[width=\textwidth]{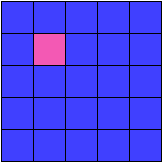}
		\caption{$(1,3) \in \mathscr{M}_{[2]}(5,5)$.}
	\end{subfigure}
	\begin{subfigure}[c]{0.2\textwidth}
		\centering
		\raisebox{0.75\hsize}[0pt][0pt]{\includegraphics[width=\textwidth]{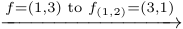}}
	\end{subfigure}
	\begin{subfigure}[t]{0.3\textwidth}
		\includegraphics[width=\textwidth]{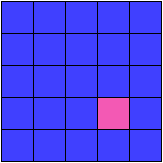}
		\caption{$(3,1) \in \mathscr{M}_{[2]}(5,5)$.}
	\end{subfigure}

	\caption{Points in $\mathscr{M}_{[2]}(5,5)$ and their symmetrizations about $(1,2)$.}
	\label{reflectionExample}
\end{figure}

\begin{figure}
	\centering
	\begin{subfigure}[t]{0.3\textwidth}
		\includegraphics[width=\textwidth]{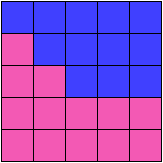}
		\caption{$A$.}
	\end{subfigure}
	\begin{subfigure}[b]{0.2\textwidth}
		\centering
		\raisebox{0.75\hsize}{\includegraphics[width=\textwidth]{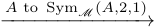}}
	\end{subfigure}
	\begin{subfigure}[t]{0.3\textwidth}
		\includegraphics[width=\textwidth]{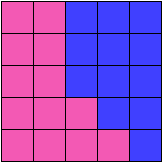}
		\caption{$\Sym_{\mathscr{M}}(A,2,1)$.}
	\end{subfigure}
	
	\caption{The symmetrization of a set $A$ about $(2,1)$ in $\mathscr{M}=\mathscr{M}_{[2]}(5,5)$.}
	\label{symmetrizationExample}
\end{figure}

\begin{dfn}[Symmetrization in Arbitrary Posets]\label{symmetrizationAnyPoset}
	Suppose that we have a poset $\mathscr{P}$.
	Furthermore, suppose that we have a subposet $\mathscr{Q} \subseteq \mathscr{P}$,
	that is isomorphic to a finite multiset lattice $\mathscr{M} = \mathscr{M}_{[d]}(\ell_1,\dots, \ell_d)$.
	For a set $A\subseteq \mathscr{P}$ and coordinates $c_1,c_2\in [d]$ with $\ell_{c_1} = \ell_{c_2}$,
	we define \textit{the symmetrization of $A$ about $(c_1,c_2)$ and with respect to} $\mathscr{Q}$ as
	\begin{align*}
		\Sym_{\mathscr{P}}(\mathscr{Q},A,c_1,c_2) = (A\setminus \mathscr{Q})\cup \Sym_{\mathscr{M}}(A\cap \mathscr{Q}, c_1,c_2).
	\end{align*}
\end{dfn}

\begin{dfn}[Packed Posets]
	Suppose that we have a finite multiset lattice $\mathscr{T}=\mathscr{T}_1\times \cdots \times \mathscr{T}_d$
	and take a subset of coordinates $S=\{x_1<\dots <x_k\}\subseteq [d]$.
	Consider a subposet $\mathscr{Q} \subseteq \mathscr{T}$ that is isomorphic to a multiset lattice $\mathscr{M}=\mathscr{M}_{S}(\ell_1,\dots, \ell_d)$.
	We say that $\mathscr{Q}$ is \textit{packed} if $\mathscr{Q} =\mathscr{T}_{x_1}^{-1}[a_{1}, b_{1}]\times \cdots \times \mathscr{T}_{x_k}^{-1}[a_k, b_k]$,
	for some $a_i,b_i\in [|\mathscr{T}_{x_i}|]_0$ with $a_i\leq b_i$.
\end{dfn}

\begin{exm}
	On Figure \ref{subposets} we can see an example of packed poset and an example of a poset that is not packed.
	On Figure \ref{symmetrizationPackedExample} we can see a symmetrization with respect to a packed poset.
\end{exm}

\begin{figure}
	\centering
	\begin{subfigure}[t]{0.3\textwidth}
		\includegraphics[width=\textwidth]{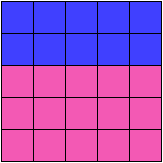}
		\caption{A packed poset.}
	\end{subfigure}
	\hspace{0.5cm}
	\begin{subfigure}[t]{0.3\textwidth}
		\includegraphics[width=\textwidth]{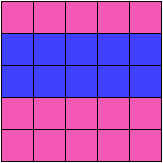}
		\caption{A poset that is not packed.}
	\end{subfigure}
	\caption{Isomorphic copies of $\mathscr{M}_{[2]}(5,3)$ inside $\mathscr{M}_{[2]}(5,5)$.}
\label{subposets}
\end{figure}

\begin{figure}
	\centering
	\begin{subfigure}[t]{0.38\textwidth}
		\includegraphics[width=\textwidth]{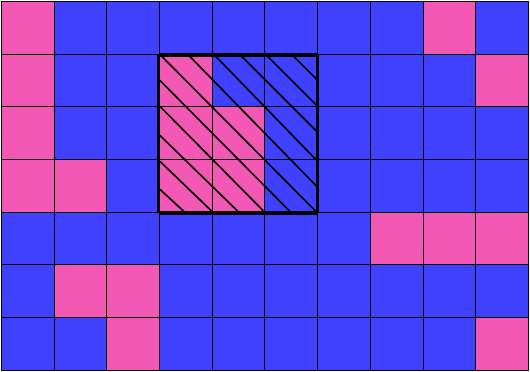}
		\caption{The set $A$ and poset $\mathscr{Q}$ (shaded).}
	\end{subfigure}
	\hfill
	\begin{subfigure}[b]{0.19\textwidth}
		\raisebox{0.7\hsize}{\includegraphics[width=\textwidth]{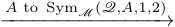}}
	\end{subfigure}
	\hfill
	\begin{subfigure}[t]{0.38\textwidth}
		\includegraphics[width=\textwidth]{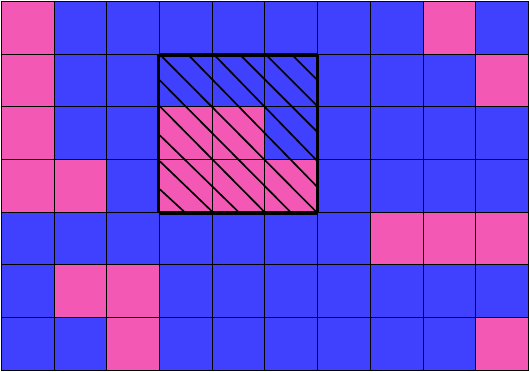}
		\caption{The set $\Sym_{\mathscr{M}}(\mathscr{Q},A,1,2)$.}
	\end{subfigure}

	\caption{The symmetrization of a set $A$ inside $\mathscr{M}=\mathscr{M}_{[2]}(10,7)$ about $(1,2)$ with respect to the packed subposet $\mathscr{Q} \iso \mathscr{M}_{[2]}(3,3)$.}
	\label{symmetrizationPackedExample}
\end{figure}

\begin{dfn}[Rank Increasing Weight Functions]\label{rankIncreasing}
	Suppose that $\mathscr{M}$ is a multiset lattice and $\wt$ is a weight function on $\mathscr{M}$.
	We say that $\wt$ is \textit{rank increasing} if whenever $x,y, \in \mathscr{M}$ 
	with $x$ having smaller rank than $y$ ($|\mu_{\mathscr{M}}(x)| < |\mu_{\mathscr{M}}(y)|$),
	then $\wt(x) < \wt(y)$.
\end{dfn}

\begin{dfn}[Rank Constant Weight Functions]\label{rankConstant}
	Suppose that $\mathscr{M}$ is a multiset lattice and $\wt$ is a weight function on $\mathscr{M}$.
	We say that $\wt$ is \textit{rank constant} if whenever $x,y, \in \mathscr{M}$ 
	with $x$ having the same rank as $y$ ($|\mu_{\mathscr{M}}(x)| = |\mu_{\mathscr{M}}(y)|$),
	then $\wt(x) = \wt(y)$.
\end{dfn}

\begin{rem}
	The standard weight function on any multiset lattice is rank increasing and rank constant.
\end{rem}



\begin{lem}[Reflect-Push Method]\label{reflect-push}
	Suppose that $\mathscr{M} = \mathscr{M}_{[d]}(\ell_1,\dots, \ell_d)$ is a finite multiset lattice with a rank increasing and rank constant weight function.
	Furthermore, suppose that the following hold:
	\begin{enumerate}
		\item We have a downset $A\subseteq \mathscr{M}$.
		\item There is a packed poset $\mathscr{Q} \subseteq \mathscr{M}$.
		\item We can find a set $O \subseteq A\cap \mathscr{Q}$ such that $A\setminus O$ is a downset.
		\item There exist coordinates $c_1,c_2\in [d]$ for which we define the set 
		\begin{align*}
			R = \{f_{\{c_1,c_2\}} \bigm | f\in O\},
		\end{align*}
		where the reflections are happening in $\mathscr{Q}$ and the lengths in the directions of $c_1$ and $c_2$ are the same inside $\mathscr{Q}$.
		\item We have a set $P\subseteq \mathscr{M}\setminus A$ such that $(A\setminus O) \cup P$ is a downset.
		\item There is a bijective function $\sigma: R \rightarrow P$ such that for all $f\in R$ we have $\wt(f) \leq \wt(\sigma(f))$.
	\end{enumerate}
	Then:
	\begin{enumerate}
		\item $\wt(A) \leq \wt((A\setminus O) \cup P)$.
		\item If we can find an $f\in R$ such that $\wt(f) < \wt(\sigma(f))$, then $\wt(A) < \wt((A\setminus O) \cup P)$ and hence $A$ is not optimal.
	\end{enumerate}
\end{lem}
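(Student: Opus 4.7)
The plan is to directly compare $\wt(A)$ and $\wt((A\setminus O)\cup P)$ by rewriting the difference as $\wt(P)-\wt(O)$, and to bound this from below via the bijection $\sigma$. The two key ingredients are (i) that the reflection $f\mapsto f_{\{c_1,c_2\}}$ preserves rank in $\mathscr{M}$, so by rank-constancy $\wt(O)=\wt(R)$, and (ii) the pointwise weight inequality along $\sigma$, which upon summation gives $\wt(R)\leq\wt(P)$.

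First I would note that because $\ell_{c_1}=\ell_{c_2}$ inside $\mathscr{Q}$, the map $f\mapsto f_{\{c_1,c_2\}}$ is a well-defined involution on $\mathscr{Q}$, and in particular a bijection $O\to R$. Combined with $|R|=|P|$ coming from $\sigma$, this yields $|O|=|P|$, so $|(A\setminus O)\cup P|=|A|$: we are comparing two downsets of the same cardinality. Next I would establish $\wt(O)=\wt(R)$: for each $f\in O$, the reflected element $f_{\{c_1,c_2\}}$ differs from $f$ only by swapping the values at the coordinates $c_1$ and $c_2$, so it has the same coordinate sum, hence the same $\mathscr{M}$-rank (the packed structure of $\mathscr{Q}$ ensures that ranks in $\mathscr{Q}$ and in $\mathscr{M}$ differ only by a constant shift that is common to all elements of $\mathscr{Q}$). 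Rank-constancy of $\wt$ then gives $\wt(f)=\wt(f_{\{c_1,c_2\}})$, and summing over $O$ yields $\wt(O)=\wt(R)$.

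Finally, summing $\wt(f)\leq\wt(\sigma(f))$ over $f\in R$ gives $\wt(R)\leq\wt(P)$, so
\begin{equation*}
\wt((A\setminus O)\cup P)=\wt(A\setminus O)+\wt(P)\geq\wt(A\setminus O)+\wt(R)=\wt(A\setminus O)+\wt(O)=\wt(A),
\end{equation*}
which is (1). For (2), if some $f\in R$ satisfies $\wt(f)<\wt(\sigma(f))$, the summed inequality becomes strict, so $\wt((A\setminus O)\cup P)>\wt(A)$; since $(A\setminus O)\cup P$ is by hypothesis a downset of the same size as $A$, $A$ fails to be optimal. The main obstacle is really just careful bookkeeping: one needs to verify that reflection inside the packed subposet $\mathscr{Q}$ preserves ambient $\mathscr{M}$-rank, so that rank-constancy of $\wt$ transfers the argument from $\mathscr{Q}$ to $\mathscr{M}$. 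Note that only rank-constancy is invoked here; the rank-increasing hypothesis will presumably be needed only in the downstream optimality classifications.
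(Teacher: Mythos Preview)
Your proposal is correct and follows essentially the same argument as the paper: reflection preserves rank and hence weight (so $\wt(O)=\wt(R)$), the bijection $\sigma$ gives $\wt(R)\le\wt(P)$ and $|O|=|P|$, and combining these yields $\wt(A)\le\wt((A\setminus O)\cup P)$, with strict inequality when some $\wt(f)<\wt(\sigma(f))$. Your treatment is in fact a bit more explicit than the paper's about why reflection inside a packed $\mathscr{Q}$ preserves ambient $\mathscr{M}$-rank and why only rank-constancy is needed here.
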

\begin{proof}
	First, reflections preserve weight and symmetrization preserves size, which gives that $|O|=|R|$ and $\wt(O) = \wt(R)$.
	Next, $|O|=|P|$ because $\sigma$ is a bijection, whence $A$ and $(A\setminus O) \cup P$ are downsets of the same size.
	However, we also know that $\wt(R) \leq \wt(P)$ by definition of $\sigma$.
	So, we have $\wt(O) \leq \wt(P)$.
	Thus, $wt((A\setminus O) \cup P) = \wt(A) -\wt(O) + \wt(P) \geq \wt(A) +0 = \wt(A)$.
	Therefore, $\wt(A) \leq \wt((A\setminus O) \cup P)$.
	This proves the first claim.
	For the second claim, if such an $f\in R$ exists then all the inequalities above become strict and hence $A$ is not optimal. 
\end{proof}

The power of the reflect-push method can be observed in Theorem \ref{rectangleProof}.
It is the main tool for its proof.

\begin{thm}\label{rectangleProof}
	Suppose that $\mathscr{M} = \mathscr{M}_{[2]}(\ell_1,\ell_2) = \mathscr{T}_1 \times \mathscr{T}_2$ 
	is a finite multiset lattice with a rank increasing and rank constant weight function.
	If $A\subseteq \mathscr{M}$ is an optimal downset then one of the following statements must hold:
	\begin{enumerate}[label=\textbf{T.\arabic*}]
		\item\label{type1} $A$ is an initial segment of a domination order: that is, there exists $\pi\in \mathfrak{S}_2$ such that $A=\mathscr{M}_{\pi}^{-1}[|A|]$.
		\item\label{type2} $A$ is a symmetrization (using a packed poset) of an initial segment of a domination order: that is,
		there exist $\pi\in \mathfrak{S}_2$, a packed poset $\mathscr{Q}\subseteq \mathscr{M}$ and coordinates $c_1,c_2\in [2]$,
		such that $A = \Sym_{\mathscr{M}}(\mathscr{Q},\mathscr{M}_{\pi}^{-1}[|A|], c_1,c_2 )$.
	\end{enumerate}
\end{thm}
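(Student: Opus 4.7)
The plan is to apply Lemma~\ref{reflect-push} in its contrapositive form: if a downset $A$ is neither of type T.1 nor T.2, then I will exhibit a packed poset $\mathscr{Q}$ together with sets $O \subseteq A \cap \mathscr{Q}$ and $P \subseteq \mathscr{M} \setminus A$ satisfying the six hypotheses of the reflect-push lemma with a \emph{strict} rank increase, so that conclusion (2) of the lemma forces $A$ to be suboptimal. Throughout I encode $A$ by its column shape $\lambda = (a_0, \ldots, a_{\ell_1 - 1})$ where $a_i = |\{y : (i, y) \in A\}|$; by the downset property $a_0 \ge a_1 \ge \cdots \ge a_{\ell_1 - 1}$.

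First I would catalog the shapes covered by T.1 and T.2. A lex initial segment of size $m = q\ell_2 + r$ has shape $(\ell_2^q, r, 0^{\ell_1 - q - 1})$ and a colex initial segment of size $m = q\ell_1 + r$ has shape $((q+1)^r, q^{\ell_1 - r})$. In dimension two, any packed subposet used in T.2 must be a square of the form $\mathscr{T}_1^{-1}[\alpha, \alpha + k] \times \mathscr{T}_2^{-1}[\beta, \beta + k]$, since reflecting the two coordinates requires equal side-lengths in $\mathscr{Q}$. A T.2 shape is therefore obtained from one of the two initial-segment shapes by diagonally reflecting the portion lying in $\mathscr{Q}$, subject to the resulting set remaining a downset; a small combinatorial lemma describes precisely this limited family of "mixed" shapes that transition from a colex-type pattern to a lex-type pattern across $\mathscr{Q}$.

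Now suppose $\lambda$ matches neither catalog. I would pick a specific maximal element $(i, a_i - 1)$ of $A$ that witnesses this failure, namely a corner whose presence is not accounted for by either initial-segment pattern or by a symmetrization in a compatible packed square, and then locate a non-element $(i', j') \in \mathscr{M} \setminus A$ of strictly larger rank that is \emph{accessible}, meaning $(A \setminus \{(i, a_i - 1)\}) \cup \{(i', j')\}$ is again a downset. Concretely this requires choosing $(i', j')$ so that $(i'-1, j')$ and $(i', j'-1)$ lie in $A \setminus \{(i, a_i - 1)\}$ or outside $\mathscr{M}$. I then set $O = \{(i, a_i - 1)\}$, take $\mathscr{Q}$ to be a packed square containing both $(i, a_i - 1)$ and its diagonal reflection $(a_i - 1, i)$, let $P = \{(i', j')\}$, and let $\sigma : R \to P$ be the obvious bijection. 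Rank-increasingness of $\wt$ forces $\wt(f) < \wt(\sigma(f))$ for the unique $f \in R$, so Lemma~\ref{reflect-push}(2) yields the strict improvement and contradicts optimality of $A$.

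The main obstacle I anticipate is the existence lemma for the accessible higher-rank target $(i', j')$: one must prove that whenever $\lambda$ falls outside the T.1 / T.2 catalog, such a push target can be located. This requires a case analysis along the staircase breakpoints of $\lambda$ — handling multiple interior plateaus, partial columns of unexpected height, and drops of size greater than one that do not come from a square reflection — and in each case producing the target explicitly from the local structure of $\lambda$ near the irregularity. Once this case analysis is complete, combined with the catalog of allowed shapes it yields the contrapositive and hence the theorem.
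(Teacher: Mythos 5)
Your high-level strategy---prove the contrapositive by exhibiting a reflect-push move that strictly increases the weight of any downset outside types \ref{type1} and \ref{type2}---matches the paper's, but the restriction to singleton $O$ and $P$ creates a genuine gap. The ``existence lemma'' you defer to the end, namely that every downset outside the catalogue has a removable maximal element together with an accessible element of $\mathscr{M}\setminus A$ of strictly larger rank, is false. Take $\mathscr{M}=\mathscr{M}_{[2]}(5,5)$ with the standard weight and let $A=\{(0,0),(0,1),(1,0),(1,1)\}$ be the $2\times 2$ square. Its only removable element is $(1,1)$, of rank $2$, and after deleting it the only points of $\mathscr{M}\setminus A$ whose lower shadows lie in the remainder are $(0,2)$ and $(2,0)$, also of rank $2$; no single swap is strictly improving. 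Yet $A$ has weight $4$ while the lexicographic segment of size $4$ has weight $6$, and $A$ is neither an initial segment of a domination order nor a symmetrization of one about a packed square (a diagonal reflection inside a packed square $[a,a+k]\times[b,b+k]$ sending $(0,2)$ to $(1,0)$ would force $b=a$ and $2=1$). The squares $[k]_0\times[k]_0$ for $k\geq 3$ are worse still: there every available single swap strictly \emph{decreases} the weight.

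What the paper's proof uses, and your plan omits, are two ingredients that resolve exactly such examples. First, the sets $O$, $R$, $P$ fed into Lemma \ref{reflect-push} are whole segments of a row or column rather than single points; moving a block is what makes the minimum of $P$ beat the minimum of $R$ even when no individual exchange helps. Second, and more importantly, the argument repeatedly replaces $A$ by an equal-weight symmetrization $B=\Sym_{\mathscr{M}}(\mathscr{Q},A,c_1,c_2)$ and then shows that $B$ is not optimal, which transfers back to $A$ (this is how Lemmas \ref{case1.2}, \ref{case2.3} and \ref{case2.4} operate). For the square above one must first slide mass around at constant weight before any strictly improving push becomes available. Without block moves and these intermediate weight-preserving normalizations, the case analysis on your staircase $\lambda$ cannot be closed, so the proposal as written does not yield the theorem.
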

\begin{proof}
	If $A= \mathscr{M}$ or $A= \emptyset$, and $\ell_1 = 1$ or $\ell_2=1$, then $A = \mathscr{M}^{-1}_{\mathcal{L}}[|A|]=\mathscr{M}^{-1}_{\mathcal{C}}[|A|]$.
	So, we suppose $A\neq \mathscr{M}$ and $A\neq \emptyset$, and we have $\ell_1 > 1$ and $\ell_2>1$.
	Let $x\in \mathscr{T}_1$ be the smallest element such that $\mathscr{M}_{\{2\}}(x)\cap A \neq \mathscr{M}_{\{2\}}(x)$.
	Similarly, let $y\in \mathscr{T}_2$ be the smallest element such that $\mathscr{M}_{\{1\}}(y)\cap A \neq \mathscr{M}_{\{1\}}(y)$.
	Then $(x,y)$ is the unique element in $\mathscr{M}_{\{2\}}(x)\cap \mathscr{M}_{\{1\}}(y)$.
	This general setup can be observed in Figure \ref{generalSetupRectangleProof}.
	Let $V$ be all the elements in $\mathscr{M}_{\{2\}}(x)$ after $(x,y)$,
	and $H$ be all the elements in $\mathscr{M}_{\{1\}}(y)$ after $(x,y)$ (Figure \ref{generalSetupRectangleProof_V_and_H}).
	With this setup, we will continue the proof of Theorem \ref{rectangleProof} after we have proved lemmas \ref{case1.1}--\ref{case2.4}.
	
	\begin{figure}
		\centering
		\includegraphics[width=0.5\textwidth]{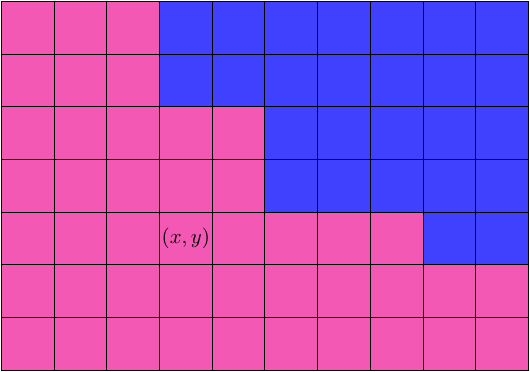}
		\caption{The general setup for the proof of Theorem \ref{rectangleProof}.}
		\label{generalSetupRectangleProof}
	\end{figure}
	
	\begin{figure}
		\centering
		\begin{subfigure}[t]{0.32\textwidth}
			\includegraphics[width=\textwidth]{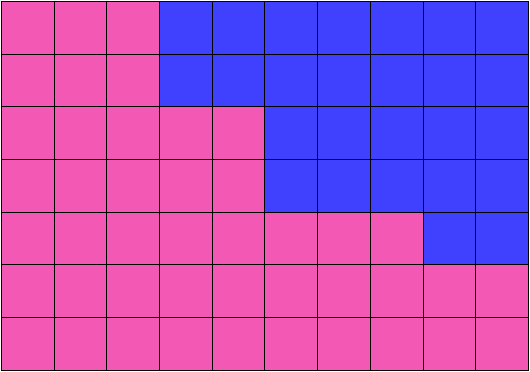}
			\caption{The set $A$.}
		\end{subfigure}
		\hfill
		\begin{subfigure}[t]{0.32\textwidth}
			\includegraphics[width=\textwidth]{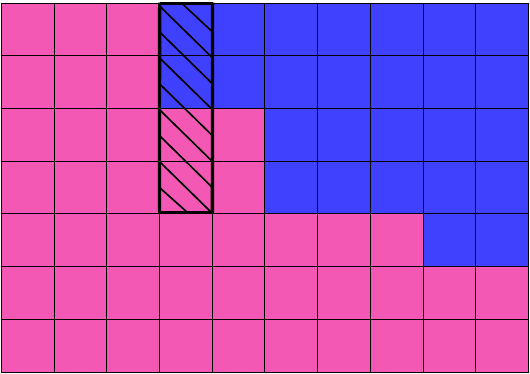}
			\caption{The set $V$ (shaded).}
		\end{subfigure}
		\hfill
		\begin{subfigure}[t]{0.32\textwidth}
			\includegraphics[width=\textwidth]{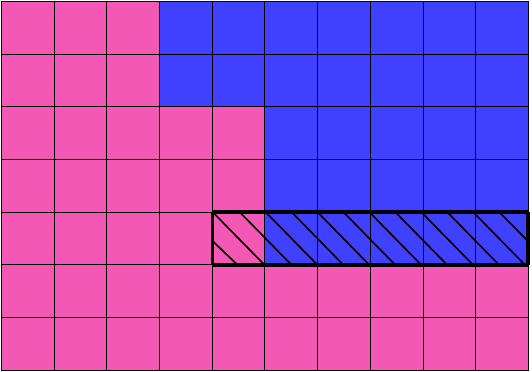}
			\caption{The set $H$ (shaded).}
		\end{subfigure}
		\caption{The sets $A$, $V$ and $H$.}
		\label{generalSetupRectangleProof_V_and_H}
	\end{figure}
	
	\begin{lem}\label{case1.1}
		Suppose that $H\cap A=\emptyset$ or $V\cap A=\emptyset$.
		If $(x,y)\in A$ then $x=0$ or $y=0$ and hence $A$ satisfies the conclusion of Theorem \ref{rectangleProof}.
	\end{lem}
	\begin{proof}
		Without loss of generality suppose that $V\cap A =\emptyset$.
		First, suppose that $|H| \geq |V|$.
		If $x=0$ then $A=\mathscr{M}^{-1}_{\mathcal{C}}[|A|]$ (Figure \ref{Case_1_1_x=0}).
		\begin{figure}
			\centering
			\includegraphics[width=0.5\textwidth]{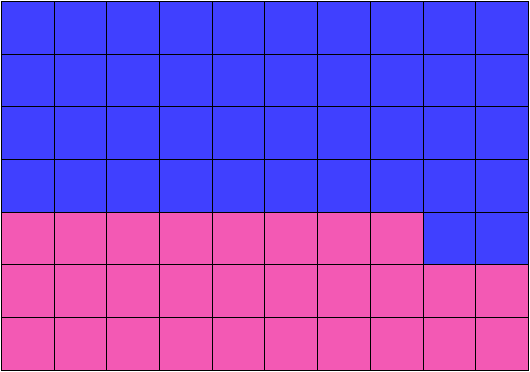}
			\caption{Lemma \ref{case1.1} with $V\cap A =\emptyset$, $|H| \geq |V|$ and $x=0$.}
			\label{Case_1_1_x=0}
		\end{figure}
		
		Assume for the purposes of a contradiction that $x>0$.
		Let
		\begin{align*}
			n = \min\{|H\setminus A|, |V|\}.
		\end{align*}
		We are going to construct a a new set $A'$ from $A$ by using the reflect-push method.
		The first step is a reflection and the second step is a push.
		
		\textit{Lemma \ref{case1.1} Reflect:} Define the packed poset (Figure \ref{Case_1_1_setup_rectangleProof})
		\begin{align*}
			\mathscr{Q} = \{(a,b) \bigm | x-1\leq a \leq x-1+|V| \text{ and } y\leq b \leq y+|V|. \}
		\end{align*}
		Let $O$ be the last $n$ elements in $\mathscr{M}_{\{2\}}(x-1)$ (Figure \ref{Case_1_1_setup_rectangleProof}).
		Then we define the set of reflections of elements in $O$ inside $\mathscr{Q}$ to be (Figure \ref{Case_1_1_setup_rectangleProof})
		\begin{align*}
			R = \{f_{\{1,2\}}\in \mathscr{Q} \bigm | f\in O\cap \mathscr{Q}\}
		\end{align*}
		
		\textit{Lemma \ref{case1.1} Push:} We are no going to push the elements in $R$ forward in $\mathscr{T}_2$.
		Define the set $P$ (Figure \ref{Case_1_1_setup_rectangleProof}) to be the first $n$ elements in $H\setminus A$.
		Then $|P|=|R|$ and the minimum element of $P$ comes after the minimum element in $R$, since $(x,y)\in A$.
		Therefore, the weight of $P$ is strictly greater than the weight of $R$.
		
		\begin{figure}
			\centering
			\begin{subfigure}[t]{0.32\textwidth}
				\includegraphics[width=\textwidth]{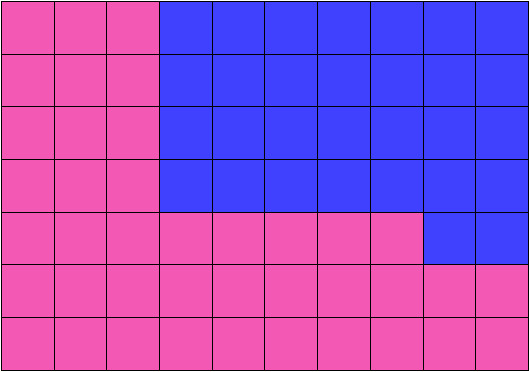}
				\caption{The set $A$.}
			\end{subfigure}
			\hfill
			\begin{subfigure}[t]{0.32\textwidth}
				\includegraphics[width=\textwidth]{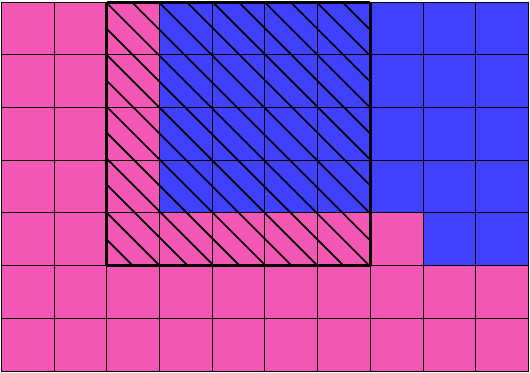}
				\caption{The packed poset $\mathscr{Q}$ (shaded).}
			\end{subfigure}
			\hfill
			\begin{subfigure}[t]{0.32\textwidth}
				\includegraphics[width=\textwidth]{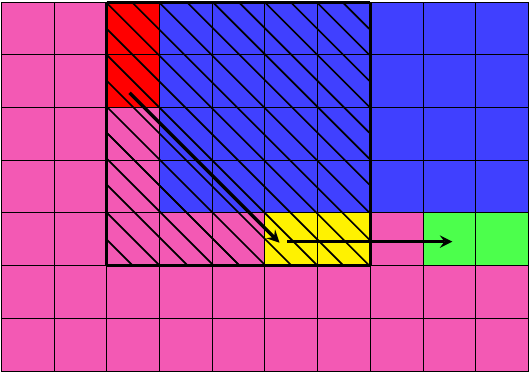}
				\caption{The sets $O$ (start of first arrow/red), $R$ (start of second arrow/yellow) and $P$ (end of second arrow/green).}
			\end{subfigure}
			\caption{Lemma \ref{case1.1} with $V\cap A =\emptyset$, $|H| \geq |V|$, $x>0$ and $n = |H\setminus A|$.}
			\label{Case_1_1_setup_rectangleProof}
		\end{figure}
		
		Finally, we form the set $A'$ (Figure \ref{Case_1_1_rectangleProof}) as follows
		\begin{align*}
			A' = (A\setminus O) \cup P.
		\end{align*}
		By Lemma \ref{reflect-push} the weight of $A'$ is strictly grater than the weight of $A$, which gives a contradiction.
		\begin{figure}
			\centering
			\begin{subfigure}[t]{0.4\textwidth}
				\includegraphics[width=\textwidth]{section2/2D_Pictures/cube_diagram_7x10_proof_case_1_1_H_geq_V.pdf}
				\caption{The set $A$.}
			\end{subfigure}
			\begin{subfigure}[b]{0.1\textwidth}
				\centering
				\raisebox{1.4\hsize}{\includegraphics[width=\textwidth]{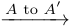}}
			\end{subfigure}
			\begin{subfigure}[t]{0.4\textwidth}
				\includegraphics[width=\textwidth]{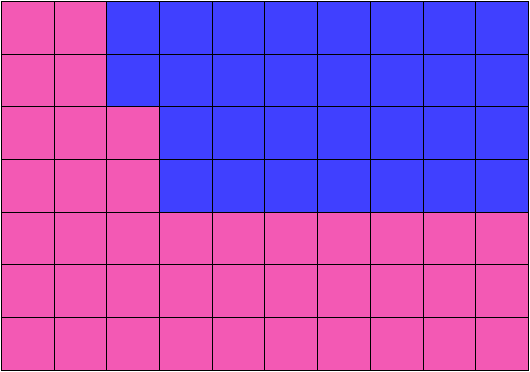}
				\caption{The set $A'$.}
			\end{subfigure}
			\caption{Lemma \ref{case1.1} with $V\cap A =\emptyset$, $|H| \geq |V|$, $x>0$ and $n = |H\setminus A|$: constructing $A'$, see Figure \ref{Case_1_1_setup_rectangleProof} for the individual steps.}
			\label{Case_1_1_rectangleProof}
		\end{figure}
		
		Next, suppose that $|H|< |V|$.
		If $y=0$ then we have that $A$ is a symmetrization of $\mathscr{M}^{-1}_{\mathcal{L}}[|A|]$ (Figure \ref{Case_1_1_y=0}).
		\begin{figure}
			\centering
			\includegraphics[width=0.5\textwidth]{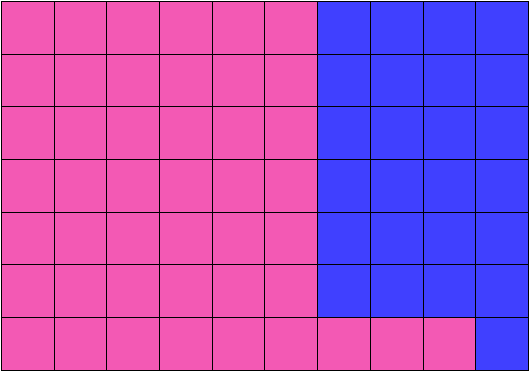}
			\caption{Lemma \ref{case1.1} with $V\cap A =\emptyset$, $|H| < |V|$ and $y=0$.}\label{Case_1_1_y=0}
		\end{figure}
		Assume to the contrary that $y>0$. 
		Then we form a new set $B$ (Figure \ref{Case_1_1_2_rectangleProof}) from $A$ by replacing $H\cap A$ with the first $|H\cap A|$ elements of $V$,
		and we now use a similar reflect-push method like in the case when $|H|\geq |V|$, to show that $B$ is not optimal (giving us a contradiction),
		with $V$ and $H$ exchanging places for $B$ compared to the $x>0$ case.
		\begin{figure}
			\centering
			\begin{subfigure}[t]{0.4\textwidth}
				\includegraphics[width=\textwidth]{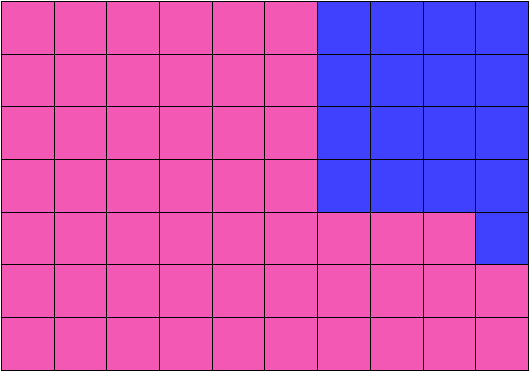}
				\caption{The set $A$.}
			\end{subfigure}
			\begin{subfigure}[b]{0.1\textwidth}
				\centering
				\raisebox{1.4\hsize}{\includegraphics[width=\textwidth]{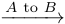}}
			\end{subfigure}
			\begin{subfigure}[t]{0.4\textwidth}
				\includegraphics[width=\textwidth]{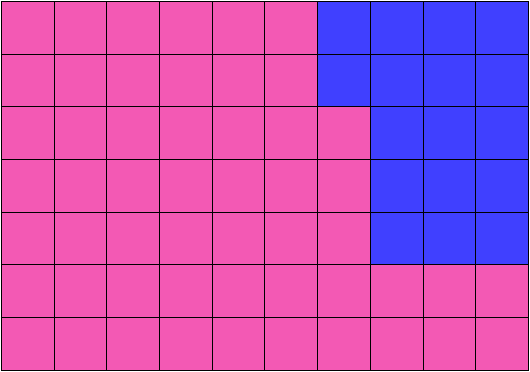}
				\caption{The set $B$.}
			\end{subfigure}
			\caption{Lemma \ref{case1.1} with $V\cap A =\emptyset$, $|H| < |V|$ and $y>0$: constructing $B$.}
			\label{Case_1_1_2_rectangleProof}
		\end{figure}
	\end{proof}
	
	\begin{lem}\label{case1.2}
		Suppose that $H\cap A=\emptyset$ or $V\cap A=\emptyset$.
		If $(x,y)\not\in A$ then $x\leq 1$ or $y\leq 1$ and hence $A$ satisfies the conclusion of Theorem \ref{rectangleProof}.
	\end{lem}
	\begin{proof}
		Without loss of generality suppose that $V\cap A =\emptyset$ and $|H|\geq |V|$.
		If $x\leq 1$ then $A$ is a symmetrization of $\mathscr{M}^{-1}_{\mathcal{C}}[|A|]$ (Figure \ref{Case_1_2_rectangleProofOptimal}).
		\begin{figure}
			\centering
			\begin{subfigure}[t]{0.45\textwidth}
				\includegraphics[width=\textwidth]{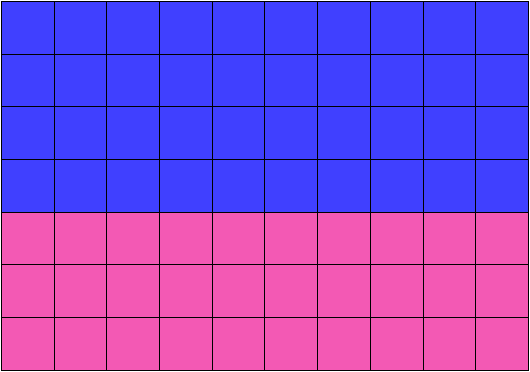}
				\caption{The case $x=0$.}
			\end{subfigure}
			\hfill
			\begin{subfigure}[t]{0.45\textwidth}
				\includegraphics[width=\textwidth]{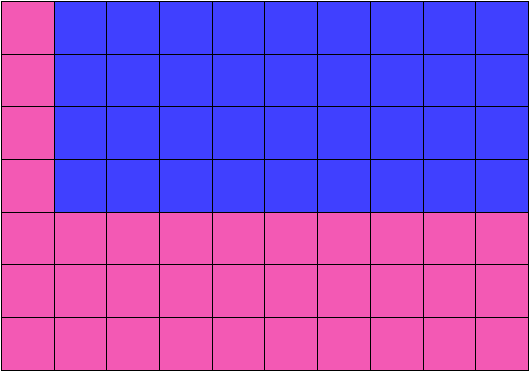}
				\caption{The case $x=1$.}
			\end{subfigure}
			\caption{Lemma \ref{case1.2} with $V\cap A =\emptyset$ and $|H|\geq |V|$.}
			\label{Case_1_2_rectangleProofOptimal}
		\end{figure}
		Assume for the purposes of a contradiction that $x>1$.
		Then we form a new set $B$ (Figure \ref{Case_1_2_rectangleProof}) by removing from $A$ the last $|V|$ elements in $\mathscr{M}_{2}(x-1)$,
		and replacing them with the first elements $|V|$ elements in $\{(x,y)\}\cup H$.
		Note that $B$ is a downset that has the same weight as $A$, and falls under case in Lemma \ref{case1.2},
		which gives that $B$ is not optimal and thus $A$ is not optimal, a contradiction.
		\begin{figure}
			\centering
			\begin{subfigure}[t]{0.4\textwidth}
				\includegraphics[width=\textwidth]{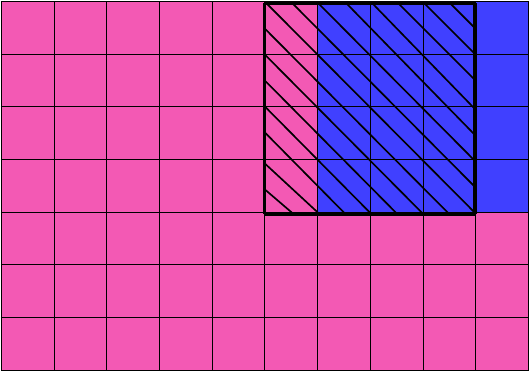}
				\caption{The set $A$.}
			\end{subfigure}
			\begin{subfigure}[b]{0.1\textwidth}
				\centering
				\raisebox{1.4\hsize}{\includegraphics[width=\textwidth]{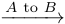}}
			\end{subfigure}
			\begin{subfigure}[t]{0.4\textwidth}
				\includegraphics[width=\textwidth]{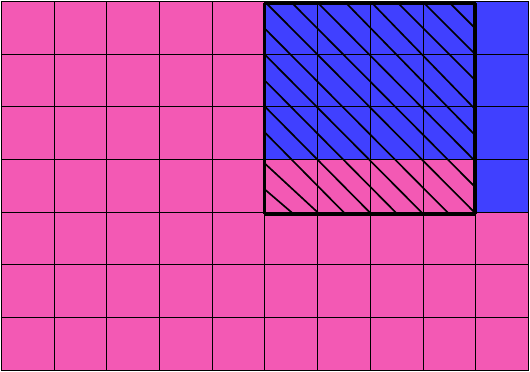}
				\caption{The set $B$}
			\end{subfigure}
			\caption{Lemma \ref{case1.2} with $V\cap A =\emptyset$ and $|H|\geq |V|$: constructing $B$.}
			\label{Case_1_2_rectangleProof}
		\end{figure}
	\end{proof}
	
	Putting Lemma \ref{case1.1} and \ref{case1.2} we see that Theorem \ref{rectangleProof} holds when $H\cap A=\emptyset$ or $V\cap A=\emptyset$.
	So, we need to handle the case when $V \cap A \neq \emptyset$ and $H \cap A \neq \emptyset$.
	This will again be accomplished by a series of reflect-push attacks.
	Let $p = \min\{|V\cap A|, |H\cap A|\}$.
	
	\begin{lem}\label{case2.1}
		Suppose that $V \cap A \neq \emptyset$ and $H \cap A \neq \emptyset$.
		Then $(x+p, y+p) \in A$.
	\end{lem}
	\begin{proof}
		Assume to the contrary that this is not the case.
		We are going to construct a new downset that has a bigger weight than $A$ in two steps.
		This will be another reflect-push method, but the pushing operation will be more advanced now.
		Without loss of generality we can assume that $|V\cap A|\geq |H\cap A|$.
		
		\textit{Lemma \ref{case2.1} Reflect:}
		Define the packed poset (Figure \ref{Case_2_1_setup_rectangleProof})
		\begin{align*}
			\mathscr{Q} = \{(a,b)\in \mathscr{M}\bigm | x\leq a \leq x+p, y \leq b \leq y+p\}.
		\end{align*}
		\begin{figure}
			\centering
			\begin{subfigure}[t]{0.32\textwidth}
				\includegraphics[width=\textwidth]{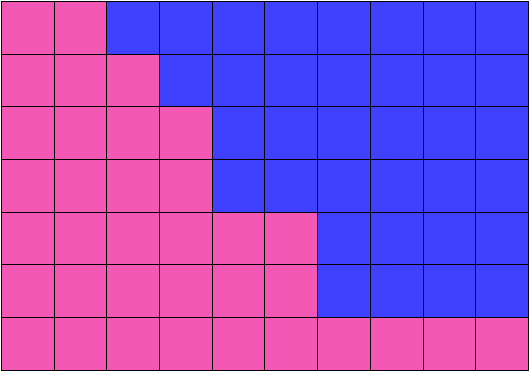}
				\caption{The set $A$.}
			\end{subfigure}
			\hfill
			\begin{subfigure}[t]{0.32\textwidth}
				\includegraphics[width=\textwidth]{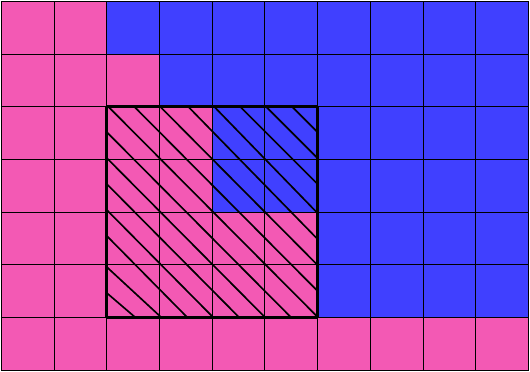}
				\caption{the packed poset $\mathscr{Q}$ (shaded).}
			\end{subfigure}
			\hfill
			\begin{subfigure}[t]{0.32\textwidth}
				\includegraphics[width=\textwidth]{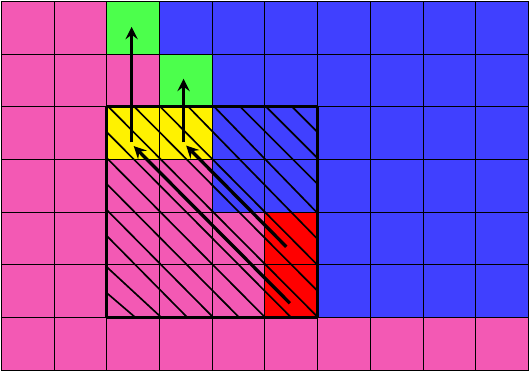}
				\caption{The sets $O$ (start of first arrows/red), $R$ (start of second arrows/yellow) and $P$ (end of second arrows/green).}
			\end{subfigure}
			
			\caption{Lemma \ref{case2.1} with $|V\cap A|\geq |H\cap A|$.}
			\label{Case_2_1_setup_rectangleProof}
		\end{figure}
		Without loss of generality we can assume that $A = \Sym_{\mathscr{M}}(\mathscr{Q},A,2,1)$,
		if not then we replace $A$ with $\Sym_{\mathscr{M}}(\mathscr{Q},A,2,1)$ which has the same weight, 
		and we show that $\Sym_{\mathscr{M}}(\mathscr{Q},A,2,1)$ is not optimal.
		Put (Figure \ref{Case_2_1_setup_rectangleProof})
		\begin{align*}
			O = A\cap \mathscr{M}_{\{2\}}(x+p).
		\end{align*}
		Next, we define $R$ (Figure \ref{Case_2_1_setup_rectangleProof}) to be the set of all reflections (in $\mathscr{Q}$) of elements in $O$.
		
		\textit{Lemma \ref{case2.1} Push:} For each $f\in R$ define the pushed-reflection by $f_{\{1,2\}}'$,
		where $f_{\{1,2\}}'$ is the first element not in $A$ that is above (larger second coordinate) $f_{\{1,2\}}$ (this is the reflection of $f$ in $\mathscr{Q}$).
		Note that $f_{\{1,2\}} \in A$ for every $f\in O$, since we assumed $A = \Sym_{\mathscr{M}}(\mathscr{Q},A,2,1)$.
		Let $P$ (Figure \ref{Case_2_1_setup_rectangleProof}) be the set of all pushed-reflections of elements in $R$.
		Well, $|P|=|R|$.
		Also, every pushed reflection in $P$ has weight that is larger than the original element in $R$.
		
		We then define the set (Figure \ref{Case_2_1_rectangleProof})
		\begin{align*}
			A'=(A\setminus O)\cup P.
		\end{align*}
		\begin{figure}
			\centering
			\begin{subfigure}[t]{0.4\textwidth}
				\includegraphics[width=\textwidth]{section2/2D_Pictures/cube_diagram_7x10_proof_Case_2_1.pdf}
				\caption{The set $A$.}
			\end{subfigure}
			\begin{subfigure}[b]{0.1\textwidth}
				\centering
				\raisebox{1.4\hsize}{\includegraphics[width=\textwidth]{section2/2D_Pictures/Case_1_1_rectangleProof_arrow.pdf}}
			\end{subfigure}
			\begin{subfigure}[t]{0.4\textwidth}
				\includegraphics[width=\textwidth]{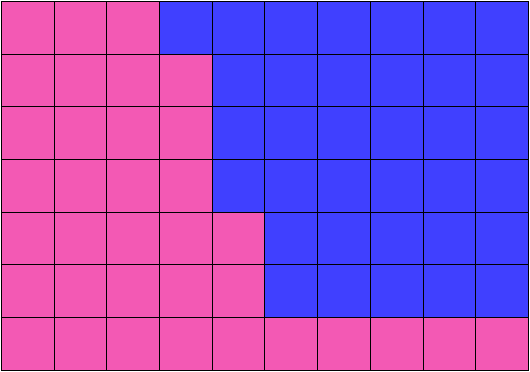}
				\caption{The set $A'$.}
			\end{subfigure}
			
			\caption{Lemma \ref{case2.1} with $|V\cap A|\geq |H\cap A|$: constructing $A'$, see Figure \ref{Case_2_1_setup_rectangleProof} for individual steps.}
			\label{Case_2_1_rectangleProof}
		\end{figure}
		By Lemma \ref{reflect-push} we have that $A'$ has larger weight than $A$.
		Thus, $A$ is not optimal, a contradiction.
		Therefore, $(x+p, y+p) \in A$.
	\end{proof}
	
	Next, we define $(x+p,v_y)\in \mathscr{M}$ to be first element in $\mathscr{M}_{\{2\}}(x+p)\setminus A$.
	Similarly, we define $(v_x,y+p)\in \mathscr{M}$ to be first element in $\mathscr{M}_{\{1\}}(y+p)\setminus A$.
	From Lemma \ref{case2.1} we know that $v_y> y+p$ and $v_x>x+p$.
	
	\begin{lem}\label{case2.2}
		Suppose that $V \cap A \neq \emptyset$ and $H \cap A \neq \emptyset$.
		We have
		\begin{enumerate}
			\item If $|V\cap A|\geq |H\cap A|$ then $(x, v_y) \not\in A$.
			\item If $|H\cap A|\geq |V\cap A|$ then $(v_x, y) \not\in A$.
		\end{enumerate}
	\end{lem}
	\begin{proof}
		We handle the first claim with $|V\cap A| \geq |H\cap A|$ as the second claim follows by symmetry.
		Assume to the contrary that $(x, v_y) \in A$.
		Note that we have $v_y < \ell_1-1$ because of the definition of $x$.
		We are going to construct a new downset that has a bigger weight than $A$ in two steps.
		This will be another reflect-push method.
		The reflection is going to use a different, but similar packed poset to the one in Lemma \ref{case2.1}.
		The pushing operation will be similar to the one in Lemma \ref{case2.1}, but even more advanced.
		
		\textit{Lemma \ref{case2.2} Reflect:}
		Define the packed poset (Figure \ref{Case_2_2_setup_rectangleProof})
		\begin{align*}
			\mathscr{Q} = \{(a,b)\in \mathscr{M} \bigm | x \leq a \leq x+p \text{ and } v_y-p \leq b \leq v_y\}.
		\end{align*}
		\begin{figure}
			\centering
			\begin{subfigure}[t]{0.32\textwidth}
				\includegraphics[width=\textwidth]{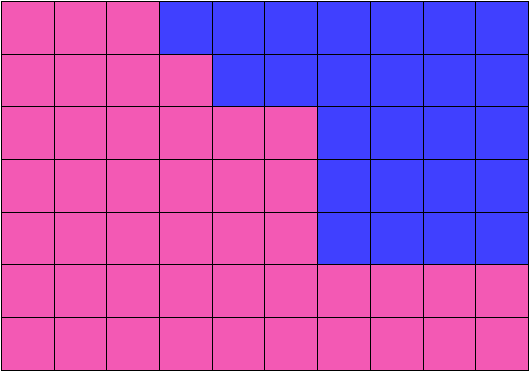}
				\caption{The set $A$.}
			\end{subfigure}
			\hfill
			\begin{subfigure}[t]{0.32\textwidth}
				\includegraphics[width=\textwidth]{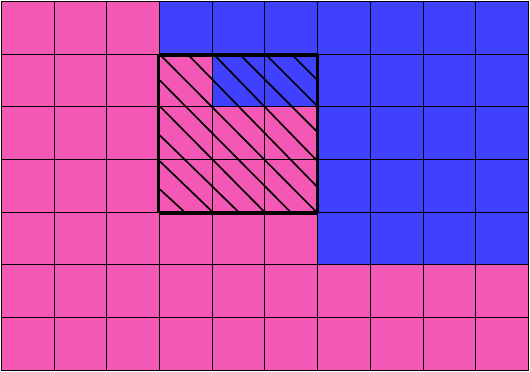}
				\caption{The packed poset $\mathscr{Q}$ (shaded).}
			\end{subfigure}
			\hfill
			\begin{subfigure}[t]{0.32\textwidth}
				\includegraphics[width=\textwidth]{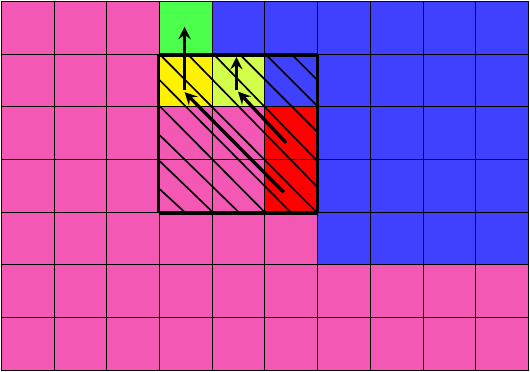}
				\caption{The sets $O$ (start of first arrows/red), $R$ (start of second arrows/yellow) and $P$ (end of second arrows/green).}
			\end{subfigure}
			
			\caption{Lemma \ref{case2.2} with $|V\cap A|\geq |H\cap A|$. Notice that for one box (lime colored) the second arrow starts and ends inside of it, this is to denote that it is not pushed upwards.}
			\label{Case_2_2_setup_rectangleProof}
		\end{figure}
		Put (Figure \ref{Case_2_2_setup_rectangleProof})
		\begin{align*}
			O = A\cap \mathscr{M}_{2}(x+p)\cap \mathscr{Q}.
		\end{align*}
		Next, we define $R$ (Figure \ref{Case_2_2_setup_rectangleProof}) to be the set of all reflections (in $\mathscr{Q}$) of elements in $O$.
		
		\textit{Lemma \ref{case2.2} Push:} For each $f\in R$ define the pushed-reflection by $f_{\{1,2\}}'$,
		where $f_{\{1,2\}}'$ is the first element not in $A$ that is $f_{\{1,2\}}$ or is above (larger second coordinate) $f_{\{1,2\}}$ (this is the reflection of $f$ in $\mathscr{Q}$).
		Let $P$ (Figure \ref{Case_2_2_setup_rectangleProof}) be the set of all pushed-reflections of elements in $R$.
		Well, $|P|=|R|$.
		Also, $(x,v_y)\in A$, whence the pushed reflection of $(x,v_y) \in R$ is above $(x,v_y)$, and thus has larger weight.
		
		We then define the set (Figure \ref{Case_2_2_rectangleProof})
		\begin{align*}
			A'=(A\setminus O)\cup P.
		\end{align*}
		\begin{figure}
			\centering
			\begin{subfigure}[t]{0.4\textwidth}
				\includegraphics[width=\textwidth]{section2/2D_Pictures/cube_diagram_7x10_proof_Case_2_2.pdf}
				\caption{The set $A$.}
			\end{subfigure}
			\begin{subfigure}[b]{0.1\textwidth}
				\centering
				\raisebox{1.4\hsize}{\includegraphics[width=\textwidth]{section2/2D_Pictures/Case_1_1_rectangleProof_arrow.pdf}}
			\end{subfigure}
			\begin{subfigure}[t]{0.4\textwidth}
				\includegraphics[width=\textwidth]{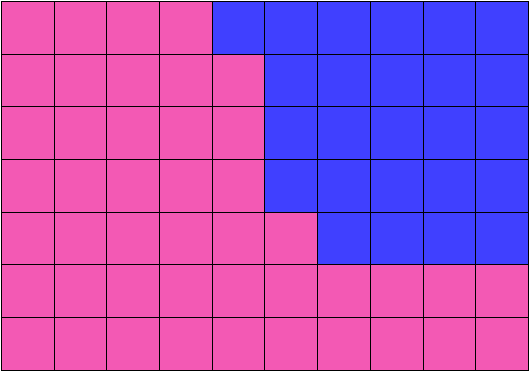}
				\caption{The set $A'$.}
			\end{subfigure}
			
			\caption{Lemma \ref{case2.2} with $|V\cap A|\geq |H\cap A|$: constructing $A'$, see Figure \ref{Case_2_2_setup_rectangleProof} for individual steps.}
			\label{Case_2_2_rectangleProof}
		\end{figure}
		By Lemma \ref{reflect-push} we have that $A'$ has larger weight than $A$.
		Thus, $A$ is not optimal, a contradiction.
		Therefore, $(x, v_y) \not\in A$.
	\end{proof}

	\begin{lem}\label{case2.3}
		Suppose that $V \cap A \neq \emptyset$ and $H \cap A \neq \emptyset$.
		We have:
		\begin{enumerate}
			\item If $|V\cap A|\geq |H\cap A|$ then $v_y = \ell_2-1$.
			\item If $|H\cap A|\geq |V\cap A|$ then $v_x = \ell_1-1$.
		\end{enumerate}
	\end{lem}
	\begin{proof}
		We handle the first claim with $|V\cap A|\geq |H\cap A|$ as the second claim follows by symmetry.
		Assume to the contrary that $v_y < \ell_2-1$.
		By Lemma \ref{case2.2} we have that $(x,v_y)\not\in A$.
		Then we can just do a symmetrization (Figure \ref{Case_2_3_rectangleProof}) and construct a set $B$ with the same weight as $A$, 
		but now Lemma \ref{case2.1} or Lemma \ref{case2.2} gives us that $B$ is not optimal, whence $A$ is not optimal.
			\begin{figure}
				\centering
				\begin{subfigure}[t]{0.4\textwidth}
					\includegraphics[width=\textwidth]{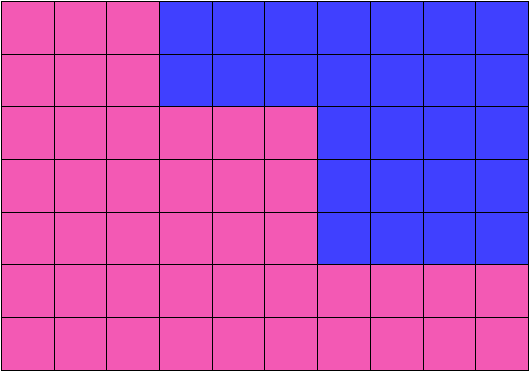}
					\caption{The set $A$.}
				\end{subfigure}
				\begin{subfigure}[b]{0.1\textwidth}
					\centering
					\raisebox{1.4\hsize}{\includegraphics[width=\textwidth]{section2/2D_Pictures/Case_1_2_rectangleProof_arrow.pdf}}
				\end{subfigure}
				\begin{subfigure}[t]{0.4\textwidth}
					\includegraphics[width=\textwidth]{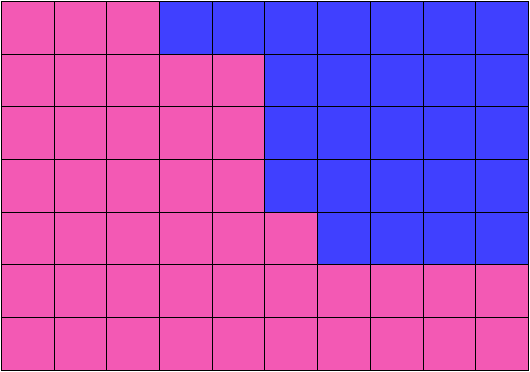}
					\caption{The set $B$.}
				\end{subfigure}
				\caption{Lemma \ref{case2.3} with $|V\cap A|\geq |H\cap A|$ and $v_y = \ell_2-2<\ell_2-1$: constructing $B$, and $B$ is not optimal by Lemma \ref{case2.1}}
				\label{Case_2_3_rectangleProof}
			\end{figure}
	\end{proof}

	\begin{lem}\label{case2.4}
		Suppose that $V \cap A \neq \emptyset$ and $H \cap A \neq \emptyset$.
		We have:
		\begin{enumerate}
			\item If $|V\cap A|\geq |H\cap A|$ then $x= 0$.
			\item If $|H\cap A|\geq |V\cap A|$ then $y= 0$.
		\end{enumerate}
	\end{lem}
	\begin{proof}
		We prove the first claim with $|V\cap A|\geq |H\cap A|$, as the second claim follows by symmetry.
		Assume to the contrary that $x>0$.
		Note that $v_y=\ell_2 - 1$ by Lemma \ref{case2.3}.
		Then we can do a symmetrization (Figure \ref{Case_2_4_rectangleProof}) to get a set $B$ of the same weight as $A$,
		but $B$ will not be optimal by Lemma \ref{case1.1}.
		Note that even if $|V\cap A|=|H\cap A|$, we always need to fill up $|H\cap A|$ elements, but we also always have at least $|V\cap A|+1$ elements to use.
		\begin{figure}
			\centering
			\begin{subfigure}[t]{0.4\textwidth}
				\includegraphics[width=\textwidth]{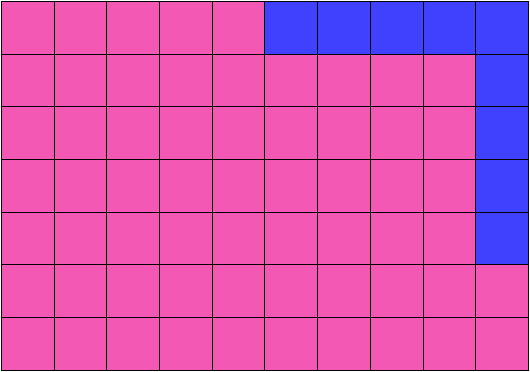}
				\caption{The set $A$.}
			\end{subfigure}
			\begin{subfigure}[b]{0.1\textwidth}
				\centering
				\raisebox{1.4\hsize}{\includegraphics[width=\textwidth]{section2/2D_Pictures/Case_1_2_rectangleProof_arrow.pdf}}
			\end{subfigure}
			\begin{subfigure}[t]{0.4\textwidth}
				\includegraphics[width=\textwidth]{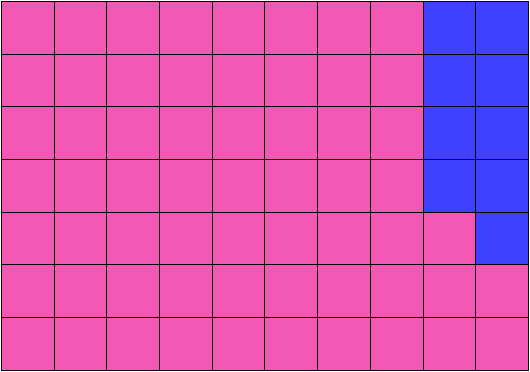}
				\caption{The set $B$.}
			\end{subfigure}
			
			\caption{Lemma \ref{case2.4} with $|V\cap A| \geq |H\cap A|$ and $x> 0$: constructing $B$, and $B$ is not optimal by Lemma \ref{case1.1}.}
			\label{Case_2_4_rectangleProof}
		\end{figure}
	\end{proof}
	
	Now we can finish the proof of Theorem \ref{rectangleProof}.
	Putting together Lemma \ref{case2.1}, \ref{case2.2}, \ref{case2.3} and \ref{case2.4}, 
	we have that $A$ is a symmetrization of $\mathscr{M}^{-1}_{\mathcal{L}}[|A|]$ (Figure \ref{Case_2_5_rectangleProof}) 
	or $A$ is a symmetrization of $\mathscr{M}^{-1}_{\mathcal{C}}[|A|]$, 
	whenever $V \cap A \neq \emptyset$ and $H \cap A \neq \emptyset$.
	Therefore, either \ref{type1} or \ref{type2} from Theorem \ref{rectangleProof} holds, 
	since we handled the case $H\cap A=\emptyset$ or $V\cap A=\emptyset$ with Lemma \ref{case1.1} and Lemma \ref{case1.2}.
	\begin{figure}
		\centering
		\includegraphics[width=0.5\textwidth]{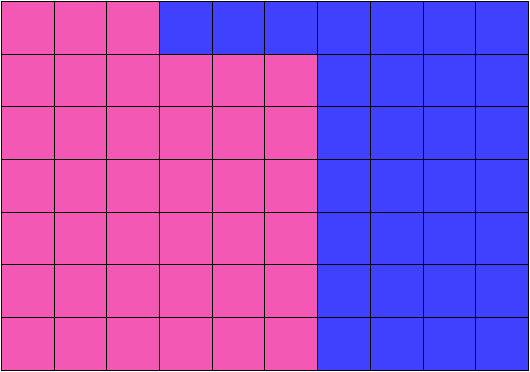}
		\caption{The case $V \cap A \neq \emptyset$ and $H \cap A \neq \emptyset$ when $A$ is a symmetrization of $\mathscr{M}^{-1}_{\mathcal{L}}[|A|]$.}\label{Case_2_5_rectangleProof}
	\end{figure}

\end{proof}

If one follows the proof of Theorem \ref{rectangleProof} carefully,
then a much more specific statement comes out.
The second statement of the theorem, that an optimal set is a symmetrization of a domination order,
can be even more detailed.
We can state exactly which packed posets are used for the symmetrization.
This is captured in Corollary \ref{fullRectangleProof}.

\begin{cor}\label{fullRectangleProof}
	Suppose that $\mathscr{M} = \mathscr{M}_{[2]}(\ell_1,\ell_2)$ is a finite multiset lattice with a rank increasing and rank constant weight function.
	If $A\subseteq \mathscr{M}$ is an optimal downset then exactly one of the following statements must hold:
	\begin{enumerate}[label=\textbf{T.\arabic*}]
		\item\label{type1Cor} $A$ is an initial segment of a domination order: that is, there exists $\pi\in \mathfrak{S}_2$ such that $A=\mathscr{M}_{\pi}^{-1}[|A|]$.
		\item\label{type2Cor}  $A$ is a symmetrization (using a packed poset) of an initial segment of a domination order: that is,
		there exist $\pi\in \mathfrak{S}_2$, a packed poset $\mathscr{Q}\subseteq \mathscr{M}$ and coordinates $c_1,c_2\in [2]$,
		such that $A = \Sym_{\mathscr{M}}(\mathscr{Q},\mathscr{M}_{\pi}^{-1}[|A|], c_1,c_2 )$.
		Furthermore, we can say exactly witch packed poset $\mathscr{Q}$ is used.
		Exactly one of the following two cases must happen:
		\begin{enumerate}[label=\textbf{T.2.\arabic*}]
			\item\label{type2.1}  Let $f\in \mathscr{M}$ be the last element in the order $\mathcal{D}_{\pi}$ such that $f \in \mathscr{M}_{\pi}^{-1}[|A|]$.
			Then put $p = |\mathscr{M}_{\{\pi(2)\}}(f(\pi(1)))\cap  \mathscr{M}_{\pi}^{-1}[|A|]|$ and $q=|\mathscr{M}_{\{\pi(1)\}}(0)\setminus \mathscr{M}_{\pi}^{-1}[|A|]|$.
			If $0<p -1 \leq q$
			then
			\begin{align*}
				\mathscr{Q} = \{g\in \mathscr{M} \bigm | &0\leq g(\pi(2)) \leq f(\pi(2)) \text{ and }\\ 
														&f(\pi(1)) \leq g(\pi(1)) \leq f(\pi(1)) +p-1\}.
			\end{align*}
			\item\label{type2.2}  Let $f\in \mathscr{M}$ be the first element in the order $\mathcal{D}_{\pi}$ such that $f \not\in \mathscr{M}_{\pi}^{-1}[|A|]$.
			Then put $p= |\mathscr{M}_{\{\pi(2)\}}(f(\pi(1)))\setminus \mathscr{M}_{\pi}^{-1}[|A|]|$ and $q = |\mathscr{M}_{\{\pi(1)\}}(\ell_{\pi(2)}-1)\cap \mathscr{M}_{\pi}^{-1}[|A|]|$.
			If $0<p-1 \leq q$ then
			\begin{align*}
				\mathscr{Q} = \{g\in \mathscr{M} \bigm | &f(\pi(2)) + 1\leq g(\pi(2)) \leq \ell_{\{\pi(2)\}}-1 \text{ and }\\ 
														&f(\pi(1))-p-1 \leq g(\pi(1)) \leq f(\pi(1))\}.
			\end{align*}
		\end{enumerate}
	\end{enumerate}
\end{cor}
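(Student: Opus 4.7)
The plan is to extract the extra information implicit in the proof of Theorem \ref{rectangleProof}. That theorem already shows every optimal downset is either T.1 (an initial segment of some domination order) or T.2 (a symmetrization of such an initial segment using a packed poset). What remains is to verify that the options are mutually exclusive (so ``exactly one''), and to pin down the explicit form of $\mathscr{Q}$ in the T.2 case. The proof is thus a careful bookkeeping of the reflect-push arguments already carried out for Theorem \ref{rectangleProof}.

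For exclusivity, I would observe that the hypothesis $0<p-1$ in both T.2.1 and T.2.2 forces $\mathscr{Q}$ to have side length at least two, so the symmetrization genuinely moves at least one element. A set produced by such a nontrivial symmetrization cannot coincide with any initial segment of a domination order, because $\mathscr{M}^{-1}_{\mathcal{L}}[|A|]$ and $\mathscr{M}^{-1}_{\mathcal{C}}[|A|]$ are the only two such initial segments of size $|A|$ in two dimensions, and each has a rigid columnwise or rowwise shape that is disrupted by the symmetrization. Exclusivity of T.2.1 versus T.2.2 follows because the two descriptions place $\mathscr{Q}$ in geometrically distinct positions: T.2.1 attaches $\mathscr{Q}$ to the last element $f$ of the initial segment (producing a ``tail'' perpendicular to the partial column of the initial segment), whereas T.2.2 attaches $\mathscr{Q}$ to the first missing element (producing a gap in the tops of the complete columns). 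These shapes cannot agree when both descriptions are nontrivial.

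For the identification of $\mathscr{Q}$, I would revisit each subcase in the proof of Theorem \ref{rectangleProof} in which $A$ is concluded to be a genuine symmetrization rather than a pure initial segment. In Lemma \ref{case1.1} with $V\cap A=\emptyset$, $|H|<|V|$, and $y=0$ (and symmetrically when $H\cap A=\emptyset$), the set $A$ is obtained from a lex initial segment by moving the elements of the partial column into a horizontal tail along the bottom row; the packed poset used for this reflection is the square with bottom-left corner $(x,0)$ and side length $m+1$ (where $m$ is the height of the original partial column), which is exactly the shape prescribed in T.2.1 with $\pi=\mathrm{id}$ and $f$ being the top of the partial column. The symmetric case in Lemma \ref{case1.2} together with the configurations arising after Lemmas \ref{case2.1}--\ref{case2.4} are handled analogously: in each case, tracing the reflect-push produces a packed poset of the form described by T.2.1 or T.2.2, depending on whether the symmetrization operates at the ``end'' of the initial segment or near the ``top'' of the partial column (equivalently, relative to the first missing element in the domination order).

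The main obstacle is the bookkeeping: translating the local variables $x$, $y$, $p$, $v_y$, $v_x$ used in the proof of Theorem \ref{rectangleProof} into the global data $(f,p,q,\mathscr{Q})$ in the corollary, and keeping track of the four possible orientations (lex or colex, and the two coordinate swaps). Once the notation is aligned, the conditions $p-1>0$ and $p-1\le q$ fall out from simple geometric observations already present in the proof: the lower bound guarantees that $\mathscr{Q}$ is at least $2\times 2$ so that the reflect-push actually moves an element, while the upper bound records that $\mathscr{Q}$ must fit inside $\mathscr{M}$ in the direction of reflection, which is exactly the amount of room recorded by $q$.
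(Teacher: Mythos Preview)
Your proposal is correct and takes essentially the same approach as the paper: the paper does not supply a separate proof of this corollary, but simply remarks (in the paragraph preceding it) that the detailed description of $\mathscr{Q}$ ``comes out'' if one follows the proof of Theorem~\ref{rectangleProof} carefully, which is exactly the bookkeeping exercise you outline. Your added discussion of the mutual exclusivity of T.1, T.2.1, and T.2.2 goes slightly beyond what the paper spells out, but is in the same spirit.
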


Theorem \ref{rectangleProof} and Corollary \ref{fullRectangleProof} give the structure of all possible optimal downsets in any $\mathscr{M}(\ell_1,\ell_2)$.
In particular, they strengthen Lindsay's edge-isoperimetric inequality (see Section \ref{appliations}) in two dimensions by providing all cases of optimality 
for downsets and any rank increasing and rank constant weight function,
compared to the optimality of the initial segments of lexicographic order with the standard weight function in the original theorem.

For every set size $m$ the potential optimal sets based on Theorem \ref{rectangleProof} can be divided into equivalence classes.
The equivalence relation for this is that for every set size $m$, two downsets are equivalent if they are both symmetrizations of the same domination order.
In most cases there will be two equivalence classes, one for $\mathcal{L}$ and one for $\mathcal{C}$.
However, note that it could be the case that there is one equivalence class in some cases.
For this example, take $\ell_1=\ell_2$, and notice that the initial segments of the domination orders are symmetrizations of each other.
So, to figure out if a downset of size $m$ is optimal we first need to figure out if it is a symmetrization of a domination order,
if it is not then it is not optimal,
and if it is a symmetrization then we figure out which initial segment of size $m$ of the domination orders is optimal.
We call symmetrizations of initial segments of $\mathcal{L}$, \textit{lexicographic type sets},
and symmetrizations of initial segments of $\mathcal{C}$, \textit{colexicographic type sets}
The next section completely determines when one domination order is better than another, and when both give an optimal downset.

\section{Cases of Equality Between Optimal Downsets in Rectangles}\label{rectanglesExact}

\begin{cor}\label{squares}
	Suppose that $\mathscr{M} = \mathscr{M}_{[2]}(\ell_1,\ell_2)$ with a rank increasing and rank constant weight function.
	If $\ell_1=\ell_2$ and $m\in [\ell_1\ell_2+1]_0$ then $\wt(\mathscr{M}_{\mathcal{C}}^{-1} (m)) = \wt(\mathscr{M}_{\mathcal{L}}^{-1} (m))$.
	That is, all lexicographic type sets and all colexicographic type sets are optimal.
\end{cor}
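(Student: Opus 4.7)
The plan is to exploit the coordinate-swap symmetry of $\mathscr{M}=\mathscr{M}_{[2]}(\ell,\ell)$ that becomes available precisely when $\ell_1=\ell_2=\ell$. Define the map $\phi\colon \mathscr{M}\to\mathscr{M}$ by $\phi(a,b)=(b,a)$. Since the rank of $(a,b)$ equals $a+b$, which is symmetric in its arguments, and the weight function is rank constant, $\phi$ preserves the weight of every element. Moreover, $\phi$ is a poset automorphism, and by the alternative description of $\mathcal{C}_{\mathscr{M}}$ as the domination order $\mathcal{D}_\pi$ induced by the transposition $\pi(1)=2,\pi(2)=1$, one checks directly from Definitions \ref{lexicographicOrder} and \ref{colexicographicOrder} that $x<_{\mathcal{L}}y$ if and only if $\phi(x)<_{\mathcal{C}}\phi(y)$. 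In particular $\phi$ maps $\mathscr{M}_{\mathcal{L}}^{-1}[m]$ bijectively onto $\mathscr{M}_{\mathcal{C}}^{-1}[m]$ while preserving weight, which already yields the equality $\wt(\mathscr{M}_{\mathcal{L}}^{-1}[m])=\wt(\mathscr{M}_{\mathcal{C}}^{-1}[m])$ stated in the corollary.

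Next I would argue that the symmetrization operation from Definitions \ref{reflectionsAndSymmetrization} and \ref{symmetrizationAnyPoset} preserves total weight. Individual reflections $f\mapsto f_{\{c_1,c_2\}}$ preserve rank and therefore weight. Partitioning the ambient packed subposet into orbits of the reflection, a fixed point contributes equally to $A$ and to $\Sym_{\mathscr{M}}(A,c_1,c_2)$, while on a two-element orbit $\{f,g\}$ a short case-check on the four possibilities for $A\cap\{f,g\}$ shows that $|A\cap\{f,g\}|=|\Sym_{\mathscr{M}}(A,c_1,c_2)\cap\{f,g\}|$; both orbit elements have the same rank and hence the same weight, so the contribution to the total weight is unchanged. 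Consequently every symmetrization of an initial segment of $\mathcal{L}$ (or of $\mathcal{C}$) has the same weight as that initial segment itself, and this extends verbatim to symmetrization with respect to a packed subposet as in Definition \ref{symmetrizationAnyPoset} since only the elements lying in $\mathscr{Q}$ are permuted.

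Combining these two observations, every lexicographic type set of size $m$ and every colexicographic type set of size $m$ has weight equal to the common value $\wt(\mathscr{M}_{\mathcal{L}}^{-1}[m])=\wt(\mathscr{M}_{\mathcal{C}}^{-1}[m])$. Since $\mathscr{M}$ is finite an optimal downset of size $m$ exists, and by Theorem \ref{rectangleProof} it must be a lex or colex type set. Hence all such sets attain the maximum weight and are simultaneously optimal, which completes the proof. The only mildly delicate point is the weight-preservation of symmetrization in the second paragraph; the rest is an immediate consequence of the reflection symmetry of the square lattice together with Theorem \ref{rectangleProof}.
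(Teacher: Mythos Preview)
Your proof is correct and follows essentially the same approach as the paper. The paper's two-line argument observes that when $\ell_1=\ell_2$ the lex and colex initial segments are symmetrizations of each other (via the full square as packed poset), hence have equal weight, and then invokes Theorem~\ref{rectangleProof}; your coordinate-swap map $\phi$ is exactly this reflection, and your explicit verification that symmetrization preserves weight simply spells out what the paper takes for granted (cf.\ the first line of the proof of Lemma~\ref{reflect-push}).
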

\begin{proof}
	Since $\ell_1=\ell_2$ we have that $\mathscr{M}_{\mathcal{C}}^{-1} (m)$ and $\mathscr{M}_{\mathcal{L}}^{-1} (m)$ are symmetrizations of each other.
	Therefore, the claim follows from Theorem \ref{rectangleProof}.
\end{proof}

\begin{lem}\label{lemmaForAll}
	Suppose that $\mathscr{M} = \mathscr{M}_{[2]}(\ell_1,\ell_2)$ with $1<\ell_1< \ell_2$ and a rank increasing and rank constant weight function.
	Also, let $A\subseteq \mathscr{M}$ be an initial segment of $\mathcal{C}$.
	If $A$ is optimal then $|A| \leq \ell_1$ or $|A|\geq \ell_1(\ell_2-1)$ or $|A| = k\ell_1$ for some $k\in \N$.
\end{lem}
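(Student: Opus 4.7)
The plan is to prove the contrapositive. Suppose $m = k\ell_1 + r$ with $1 \le k \le \ell_2 - 2$ and $1 \le r \le \ell_1 - 1$; then $A$ consists of the full rows $y = 0, \dots, k - 1$ together with the partial row $\{(0, k), (1, k), \dots, (r - 1, k)\}$. The strategy is to show that $B := \mathscr{M}_{\mathcal{L}}^{-1}[m]$, the $\mathcal{L}$-initial segment of the same size, satisfies $\wt(B) > \wt(A)$, so that $A$ cannot be optimal.

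Since $\wt$ is rank constant and rank increasing, write $w_j$ for the common weight of rank-$j$ elements (with $w_{-1} := 0$) and $C_j(X) := |\{f \in X : \operatorname{rank}(f) \ge j\}|$. Summation by parts yields
\begin{equation*}
\wt(B) - \wt(A) = \sum_{j \ge 0} (w_j - w_{j-1}) \bigl( C_j(B) - C_j(A) \bigr),
\end{equation*}
and since each $w_j - w_{j-1} > 0$, it suffices to establish $C_j(B) \ge C_j(A)$ for every $j$, with strict inequality at some $j$. Equivalently, when the ranks of the elements of $A$ and of $B$ are each listed in increasing order, the $i$-th entry from $B$ is at least the $i$-th entry from $A$, strictly so for some $i$.

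The geometric picture is that $A$ is short and wide (horizontal rows of length at most $\ell_1$, stacked to height at most $k + 1$), whereas $B$ is tall and narrow (vertical columns of length $\ell_2 > \ell_1$, capped by a partial column of width $t$, where $m = s\ell_2 + t$). Since $\ell_2 > \ell_1$, the column layout of $B$ spreads cells across strictly more rows than the row layout of $A$ can, pushing elements of $B$ to strictly higher ranks; the rank multiset of $B$ therefore weakly dominates that of $A$, with strict domination somewhere.

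The main obstacle is the bookkeeping required to verify $C_j(B) \ge C_j(A)$ at every threshold $j$; this splits into a few subcases depending on whether $m < \ell_2$ (so $B$ is a single partial column) or $m \ge \ell_2$ (so $B$ has $s \ge 1$ full columns plus a partial one), and further on the range of $j$ relative to $k, s, \ell_1, \ell_2$. Each sub-case reduces to a direct row-by-row and column-by-column tally, using the parameterizations $m = k\ell_1 + r = s\ell_2 + t$ to match contributions to $C_j$. Strict inequality at some $j$ follows because $\ell_2 > \ell_1$ forces either $B$'s maximum rank to exceed $A$'s maximum rank $\ell_1 + k - 2$, or, in the corner case of tied maxima, $B$ to carry strictly more elements at some high rank just below the maximum. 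A cleaner alternative in the spirit of the paper is to apply Lemma \ref{reflect-push} directly to $A$ using a square packed subposet $\mathscr{Q}$ straddling row $k$, reflecting the partial row into a vertical sliver and pushing it into the empty rows above $A$, where $\ell_1 < \ell_2$ supplies the room to land at strictly larger ranks and invoke the second clause of Lemma \ref{reflect-push}.
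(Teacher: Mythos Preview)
Your primary approach---comparing the rank multisets of the colex segment $A$ and the lex segment $B=\mathscr{M}_{\mathcal{L}}^{-1}[m]$ via summation by parts---is a genuinely different route from the paper's, and the strategy is sound: weak rank-multiset domination of $B$ over $A$ with at least one strict position does imply $\wt(B)>\wt(A)$ for every rank-increasing rank-constant weight. However, what you have written is an outline, not a proof. The substance of the argument is precisely the ``bookkeeping'' you defer: establishing $C_j(B)\ge C_j(A)$ for each $j$ across the subcases $m<\ell_2$ versus $m\ge\ell_2$, and locating a $j$ with strict inequality. None of this is carried out. Your closing sentence sketches a second approach via Lemma~\ref{reflect-push}, again only at the level of a plan.

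The paper argues differently and stays within its reflect-push framework. It symmetrizes $A$ about an $\ell_1\times\ell_1$ packed square $\mathscr{Q}_1$ (the bottom-left square) or $\mathscr{Q}_2$ (slid up so its top row meets the first empty cell of column $\ell_1-1$), producing a set of the same weight but with a staircase corner; it then either invokes the internal Lemmas~\ref{case2.1} or~\ref{case2.2} from the proof of Theorem~\ref{rectangleProof}, or applies a one-step reflect-push using the vertical room furnished by $\ell_1<\ell_2$. Three subranges of $|A|$ are treated: $|A|<\ell_1(\ell_1-1)$, $\ell_1(\ell_1-1)<|A|<\ell_1^2$, and $|A|>\ell_1^2$. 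Your rank-domination route, if completed, would be more elementary and self-contained, and would in fact yield the sharper uniform statement that $B$ beats $A$ for \emph{every} admissible weight simultaneously; the paper's route is more modular within the article, reusing structural lemmas already established. Either is acceptable, but as it stands you still owe the verification of $C_j(B)\ge C_j(A)$ and the strict-inequality witness.
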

\begin{proof}
	Assume to the contrary that $\ell_1<|A|< \ell_1(\ell_2-1)$ and $|A| \neq k\ell_1$ for all $k\in \N$.
	First, define the packed poset
	\begin{align*}
		\mathscr{Q}_1 = \{(a,b)\in \mathscr{M} \bigm | 0\leq a,b \leq \ell_1-1\}.
	\end{align*}
	If $|A| < \ell_1(\ell_1-1)$ (note that we have $\ell_1 > 2$ in this case) then we consider $\Sym_{\mathscr{M}}(\mathscr{Q}_1, A, 2,1)$.
	We show that $\Sym_{\mathscr{M}}(\mathscr{Q}_1, A, 2,1)$ is not optimal with two separate cases.
	If $(\ell_1-2,\ell_1-2)\not\in \Sym_{\mathscr{M}}(\mathscr{Q}_1, A, 2,1)$ (Figure \ref{Case_1_lemmaForAll}) 
	then $\Sym_{\mathscr{M}}(\mathscr{Q}_1, A, 2,1)$ is not optimal by Lemma \ref{case2.1}.
	If $(\ell_1-2,\ell_1-2)\in \Sym_{\mathscr{M}}(\mathscr{Q}_1, A, 2,1)$ (Figure \ref{Case_1_1_lemmaForAll})
	then $\Sym_{\mathscr{M}}(\mathscr{Q}_1, A, 2,1)$ is not optimal by Lemma \ref{case2.2},
	since $(0,\ell_1-1)\in \Sym_{\mathscr{M}}(\mathscr{Q}_1, A, 2,1)$
	which is not optimal by either Lemma \ref{case2.1} or Lemma \ref{case2.2} since $\ell_1 < \ell_2$,
	whence we have a contradiction.
	\begin{figure}
		\centering
		\begin{subfigure}[t]{0.23\textwidth}
			\includegraphics[width=\textwidth]{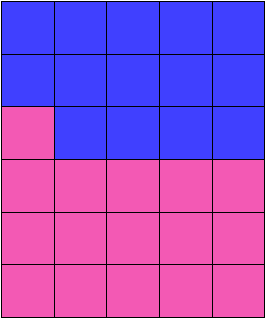}
			\caption{$A$.}
		\end{subfigure}
		\hfill
		\begin{subfigure}[b]{0.1\textwidth}
			\centering
			\raisebox{1.38\hsize}{\includegraphics[width=\textwidth]{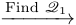}}
		\end{subfigure}
		\hfill
		\begin{subfigure}[t]{0.23\textwidth}
			\includegraphics[width=\textwidth]{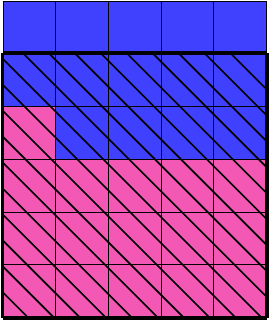}
			\caption{$\mathscr{Q}_1$ (shaded).}
		\end{subfigure}
		\hfill
		\begin{subfigure}[b]{0.18\textwidth}
			\centering
			\raisebox{0.766666667\hsize}{\includegraphics[width=\textwidth]{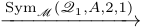}}
		\end{subfigure}
		\hfill
		\begin{subfigure}[t]{0.23\textwidth}
			\includegraphics[width=\textwidth]{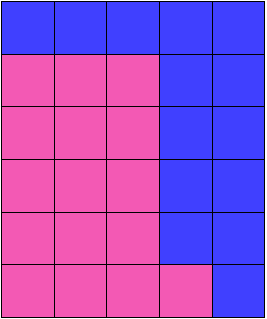}
			\caption{$\Sym_{\mathscr{M}}(\mathscr{Q}_1, A, 2,1)$.}
		\end{subfigure}
		
		\caption{Lemma \ref{lemmaForAll} with $|A| < \ell_1(\ell_1-1)$ and $(\ell_1-2,\ell_1-2)\not\in \Sym_{\mathscr{M}}(\mathscr{Q}_1, A, 2,1)$ : $\Sym_{\mathscr{M}}(\mathscr{Q}_1, A, 2,1)$ is not optimal by Lemma \ref{case2.1}.}
		\label{Case_1_lemmaForAll}
	\end{figure}
	\begin{figure}
		\centering
		\begin{subfigure}[t]{0.23\textwidth}
			\includegraphics[width=\textwidth]{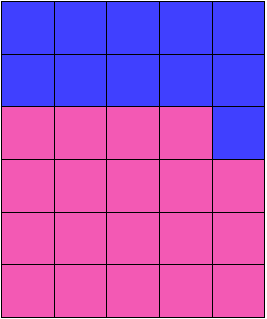}
			\caption{$A$.}
		\end{subfigure}
		\hfill
		\begin{subfigure}[b]{0.1\textwidth}
			\centering
			\raisebox{1.38\hsize}{\includegraphics[width=\textwidth]{section3/2D_Pictures/Case_1_lemmaForAll_arrow_1.pdf}}
		\end{subfigure}
		\hfill
		\begin{subfigure}[t]{0.23\textwidth}
			\includegraphics[width=\textwidth]{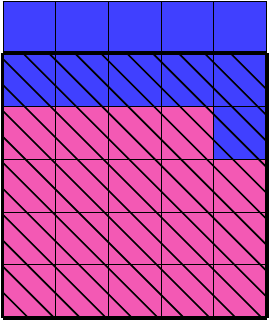}
			\caption{$\mathscr{Q}_1$ (shaded).}
		\end{subfigure}
		\hfill
		\begin{subfigure}[b]{0.18\textwidth}
			\centering
			\raisebox{0.766666667\hsize}{\includegraphics[width=\textwidth]{section3/2D_Pictures/Case_1_lemmaForAll_arrow_2.pdf}}
		\end{subfigure}
		\hfill
		\begin{subfigure}[t]{0.23\textwidth}
			\includegraphics[width=\textwidth]{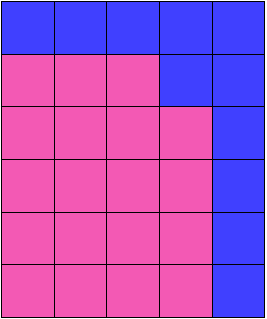}
			\caption{$\Sym_{\mathscr{M}}(\mathscr{Q}_1, A, 2,1)$.}
		\end{subfigure}
		
		\caption{Lemma \ref{lemmaForAll} with $|A| < \ell_1(\ell_1-1)$ and $(\ell_1-2,\ell_1-2)\in \Sym_{\mathscr{M}}(\mathscr{Q}_1, A, 2,1)$ : $\Sym_{\mathscr{M}}(\mathscr{Q}_1, A, 2,1)$ is not optimal by Lemma \ref{case2.2}.}
		\label{Case_1_1_lemmaForAll}
	\end{figure}
	
	So, suppose that $|A| > \ell_1(\ell_1-1)$.
	Let $(\ell_1-1, y)\in \mathscr{M}$ be the first element in $\mathscr{M}_{\{2\}}(\ell_1-1)\setminus A$.
	Define the packed poset
	\begin{align*}
		\mathscr{Q}_2 = \{(a,b)\in \mathscr{M} \bigm | 0\leq a \leq \ell_1-1 \text{ and } y-\ell_1 + 1 \leq b\leq y\}
	\end{align*}
	We handle two cases, one when $\ell_1(\ell_1-1)< |A| < \ell_1\ell_1$ and another when $|A| > \ell_1\ell_1$.
	The only difference between them is that we use $\mathscr{Q}_1$ if $\ell_1(\ell_1-1)< |A| < \ell_1\ell_1$,
	and we use $\mathscr{Q}_2$ if $|A| > \ell_1\ell_1$.
	If $\ell_1(\ell_1-1)< |A| < \ell_1\ell_1$ then we consider then we consider $\Sym_{\mathscr{M}}(\mathscr{Q}_1, A, 2,1)$ (Figure \ref{Case_2_1_lemmaForAll}),
	and $\Sym_{\mathscr{M}}(\mathscr{Q}_1, A, 2,1)$ is not optimal by a simple reflect-push method (Figure \ref{Case_2_1_lemmaForAll}).
	\begin{figure}
		\centering
		\begin{subfigure}[t]{0.23\textwidth}
			\includegraphics[width=\textwidth]{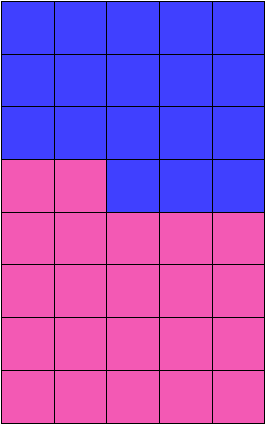}
			\caption{$A$.}
		\end{subfigure}
		\hfill
		\begin{subfigure}[b]{0.1\textwidth}
			\centering
			\raisebox{1.84\hsize}{\includegraphics[width=\textwidth]{section3/2D_Pictures/Case_1_lemmaForAll_arrow_1.pdf}}
		\end{subfigure}
		\hfill
		\begin{subfigure}[t]{0.23\textwidth}
			\includegraphics[width=\textwidth]{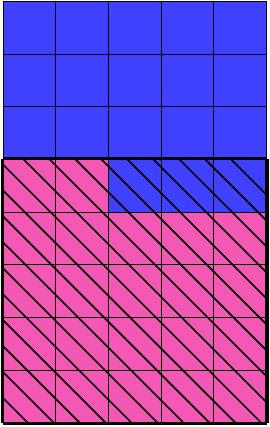}
			\caption{$\mathscr{Q}_1$ (shaded).}
		\end{subfigure}
		\hfill
		\begin{subfigure}[b]{0.18\textwidth}
			\centering
			\raisebox{1.022222222\hsize}{\includegraphics[width=\textwidth]{section3/2D_Pictures/Case_1_lemmaForAll_arrow_2.pdf}}
		\end{subfigure}
		\hfill
		\begin{subfigure}[t]{0.23\textwidth}
			\includegraphics[width=\textwidth]{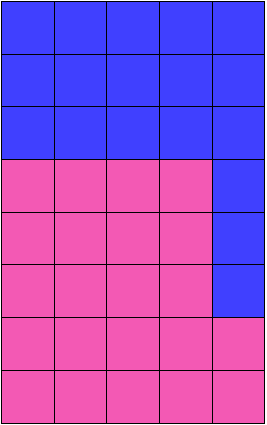}
			\caption{$\Sym_{\mathscr{M}}(\mathscr{Q}_1, A, 2,1)$.}
		\end{subfigure}
		
		\vspace{0.5cm}
		
		\begin{subfigure}[t]{0.23\textwidth}
			\includegraphics[width=\textwidth]{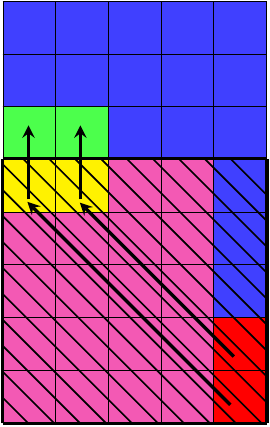}
			\caption{Setup for the reflect-push method.}
		\end{subfigure}
		\begin{subfigure}[b]{0.18\textwidth}
			\centering
			\raisebox{1.022222222\hsize}{\includegraphics[width=\textwidth]{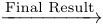}}
		\end{subfigure}
		\begin{subfigure}[t]{0.23\textwidth}
			\includegraphics[width=\textwidth]{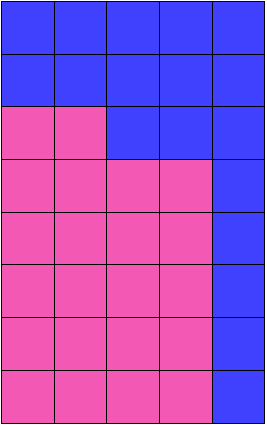}
			\caption{A set which has larger weight than $A$.}
		\end{subfigure}
		
		\caption{Lemma \ref{lemmaForAll} with $\ell_1(\ell_1-1)< |A| < \ell_1\ell_1$: $\Sym_{\mathscr{M}}(\mathscr{Q}_1, A, 2,1)$ is not optimal by a reflect-push method.}
		\label{Case_2_1_lemmaForAll}
	\end{figure}
	If $|A| > \ell_1\ell_1$ then we consider then we consider $\Sym_{\mathscr{M}}(\mathscr{Q}_2, A, 2,1)$ (Figure \ref{Case_2_2_lemmaForAll}),
	and $\Sym_{\mathscr{M}}(\mathscr{Q}_2, A, 2,1)$ is not optimal by a simple reflect-push method (Figure \ref{Case_2_2_lemmaForAll}).
	\begin{figure}
		\centering
		\begin{subfigure}[t]{0.23\textwidth}
			\includegraphics[width=\textwidth]{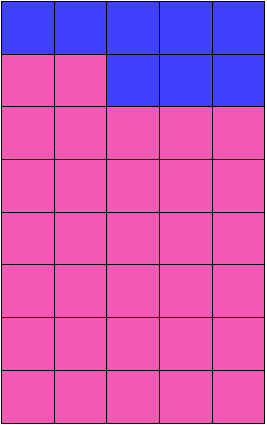}
			\caption{$A$.}
		\end{subfigure}
		\hfill
		\begin{subfigure}[b]{0.1\textwidth}
			\centering
			\raisebox{1.84\hsize}{\includegraphics[width=\textwidth]{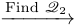}}
		\end{subfigure}
		\hfill
		\begin{subfigure}[t]{0.23\textwidth}
			\includegraphics[width=\textwidth]{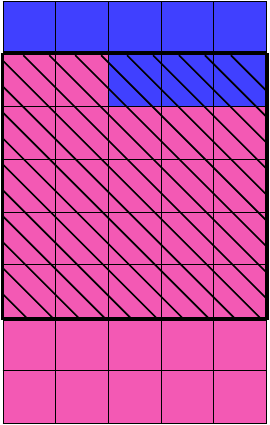}
			\caption{$\mathscr{Q}_2 (shaded)$.}
		\end{subfigure}
		\hfill
		\begin{subfigure}[b]{0.18\textwidth}
			\centering
			\centering
			\raisebox{1.022222222\hsize}{\includegraphics[width=\textwidth]{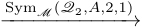}}
		\end{subfigure}
		\hfill
		\begin{subfigure}[t]{0.23\textwidth}
			\includegraphics[width=\textwidth]{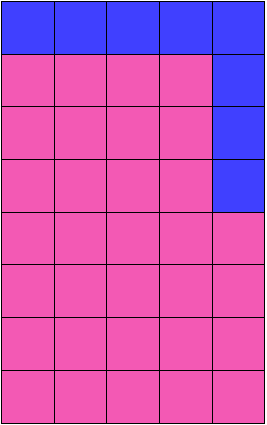}
			\caption{$\Sym_{\mathscr{M}}(\mathscr{Q}_2, A, 2,1)$.}
		\end{subfigure}
		
		\vspace{0.5cm}
		
		\begin{subfigure}[t]{0.23\textwidth}
			\includegraphics[width=\textwidth]{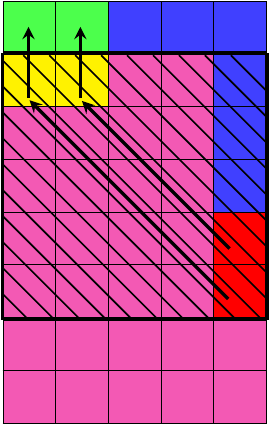}
			\caption{Setup for the reflect-push method.}
		\end{subfigure}
		\begin{subfigure}[b]{0.18\textwidth}
			\centering
			\raisebox{1.022222222\hsize}{\includegraphics[width=\textwidth]{section3/2D_Pictures/Case_2_1_lemmaForAll_arrow_3.pdf}}
		\end{subfigure}
		\begin{subfigure}[t]{0.23\textwidth}
			\includegraphics[width=\textwidth]{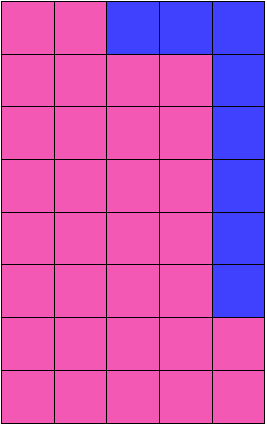}
			\caption{A set with a higher weight than $A$.}
		\end{subfigure}
		
		\caption{Lemma \ref{lemmaForAll} with $|A| > \ell_1\ell_1$: $\Sym_{\mathscr{M}}(\mathscr{Q}_2, A, 2,1)$ is not optimal by a reflect-push method.}
		\label{Case_2_2_lemmaForAll}
	\end{figure}
	Therefore, we get a contradiction in all cases and the claim holds.
\end{proof}

\begin{cor}\label{rectanglesOneMore}
	Suppose that $\mathscr{M} = \mathscr{M}_{[2]}(\ell_1,\ell_2)$ with $1<\ell_1 = \ell_2 -1$ and a rank increasing and rank constant weight function.
	Also, let $A\subseteq \mathscr{M}$ be a downset.
	\begin{enumerate}
		\item Suppose that $A = \mathscr{M}_{\mathcal{C}}^{-1} (|A|)$, 
		then $A$ is optimal if and only if $|A| \leq \ell_1$ or $|A|\geq \ell_1(\ell_2-1)$ or $|A| = k\ell_1$ for some $k\in \N$.
		\item If $A = \mathscr{M}_{\mathcal{L}}^{-1} (|A|)$ then $A$ is optimal.
	\end{enumerate}
\end{cor}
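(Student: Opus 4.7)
The plan is to reduce the corollary to a head-to-head weight comparison between the lex and colex initial segments $L_m := \mathscr{M}_{\mathcal{L}}^{-1}(m)$ and $C_m := \mathscr{M}_{\mathcal{C}}^{-1}(m)$. By Theorem \ref{rectangleProof}, every optimal downset of size $m$ is $L_m$, $C_m$, or a symmetrization of one of them. A reflection inside a packed subposet permutes two equal-length coordinate intervals and therefore leaves the rank of each element unchanged (the two offsets simply swap inside the coordinate sum); by rank-constancy of $\wt$, every symmetrization is weight-preserving. Hence the maximum weight of a size-$m$ downset equals $\max(\wt(L_m), \wt(C_m))$, and $C_m$ (respectively $L_m$) is optimal iff $\wt(C_m) \ge \wt(L_m)$ (resp.\ $\wt(L_m) \ge \wt(C_m)$).

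The ``only if'' direction of part (1) follows immediately from Lemma \ref{lemmaForAll}, which applies since $\ell_1 < \ell_2$. For the remaining directions I plan to establish that $\wt(L_m) = \wt(C_m)$ whenever $m$ lies in the allowed set, by showing that $L_m$ and $C_m$ carry identical rank multisets. When $m \le \ell_1$, both sets are one-axis chains with rank multiset $\{0, 1, \ldots, m-1\}$. When $m \ge \ell_1(\ell_2-1)$, writing $m = \ell_1\ell_2 - j$ with $0 \le j \le \ell_1$, the $j$ elements missing from $L_m$ form the top of the rightmost column while the $j$ missing from $C_m$ form the right end of the top row; each missing set has rank multiset equal to the interval $\{\ell_1+\ell_2-j-1, \ldots, \ell_1+\ell_2-2\}$. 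For $m = k\ell_1$ with $1 \le k \le \ell_2$ I induct on $k$: passing from $k$ to $k+1$ adds to colex the row $b = k$ with ranks $\{k, \ldots, k+\ell_1-1\}$, while it adds to lex the top $k$ cells of column $k-1$ (ranks $\{\ell_1, \ldots, \ell_1+k-1\}$) together with the bottom $\ell_1 - k$ cells of column $k$ (ranks $\{k, \ldots, \ell_1-1\}$); these two pieces reassemble into exactly the same multiset $\{k, \ldots, k+\ell_1-1\}$, completing the inductive step.

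Putting the pieces together, for $m$ in the allowed set $\wt(L_m) = \wt(C_m)$ and so both $L_m$ and $C_m$ are optimal, which proves part (2) at these sizes and the ``if'' direction of (1). For $m$ outside the allowed set, Lemma \ref{lemmaForAll} says $C_m$ is not optimal, which via the reduction from the first paragraph forces $\wt(L_m) > \wt(C_m)$, so $L_m$ is again the unique optimizer type and part (2) holds there as well. The main obstacle is the inductive step at $m = k\ell_1$: the precise split of the $\ell_1$ newly added lex elements into ``carry-over'' ranks in $\{\ell_1, \ldots, \ell_1+k-1\}$ and ``fresh'' low ranks in $\{k, \ldots, \ell_1-1\}$, which together assemble into a single colex row, relies essentially on the identity $\ell_2 = \ell_1 + 1$; without it the two rank multisets would diverge.
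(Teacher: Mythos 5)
Your proposal is correct, and its overall skeleton matches the paper's: both arguments rest on Theorem \ref{rectangleProof} to reduce optimality to the comparison $\wt(\mathscr{M}_{\mathcal{L}}^{-1}(m))$ versus $\wt(\mathscr{M}_{\mathcal{C}}^{-1}(m))$ (using that reflections preserve rank and hence, by rank-constancy, weight), and both invoke Lemma \ref{lemmaForAll} for the ``only if'' direction of the first claim. Where you genuinely diverge is in the mechanism for establishing $\wt(\mathscr{M}_{\mathcal{L}}^{-1}(m)) = \wt(\mathscr{M}_{\mathcal{C}}^{-1}(m))$ at the allowed sizes. The paper argues geometrically: for $m = k\ell_1$ it symmetrizes the colex segment about the bottom-left $\ell_1\times\ell_1$ packed square and observes (essentially by picture) that the result is a symmetrization of the lex segment, the identity $\ell_1+1=\ell_2$ being what makes the two shapes coincide; the boundary cases $m\le\ell_1$ and $m\ge\ell_1(\ell_2-1)$ are handled by noting the segments are already symmetrizations of each other. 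You instead verify equality of the rank multisets directly: trivially for $m\le\ell_1$, by comparing the complements for $m\ge\ell_1(\ell_2-1)$, and by an induction on $k$ for $m=k\ell_1$ in which the $\ell_1$ cells added to the lex segment (the top $k$ of column $k-1$ together with the bottom $\ell_1-k$ of column $k$) reassemble into the rank interval $\{k,\dots,k+\ell_1-1\}$ of the new colex row. I checked this bookkeeping, including the boundary step $k=\ell_1$, and it is right; your closing remark that the split hinges on $\ell_2=\ell_1+1$ is exactly the same dependence the paper exploits. Your route is more computational and self-contained (no appeal to figures), at the cost of losing the geometric explanation of \emph{why} the two segments agree; the paper's route makes the coincidence visible as a symmetrization but leaves the verification implicit. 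Your handling of the second claim (for $m$ outside the allowed set, $\mathscr{M}_{\mathcal{C}}^{-1}(m)$ not optimal forces $\wt(\mathscr{M}_{\mathcal{L}}^{-1}(m)) > \wt(\mathscr{M}_{\mathcal{C}}^{-1}(m))$, so the lex segment attains the maximum) is the same one-line argument the paper uses.
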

\begin{proof}
	We prove the first claim.
	So, suppose that $A = \mathscr{M}_{\mathcal{C}}^{-1} (|A|)$.
	Define (Figure \ref{Case_1_rectanglesOneMore})
	\begin{align*}
		\mathscr{Q} = \{(a,b)\in \mathscr{M} \bigm | 0\leq a,b \leq \ell_1-1\}.
	\end{align*}
	\begin{figure}
		\centering
		\begin{subfigure}[t]{0.23\textwidth}
			\includegraphics[width=\textwidth]{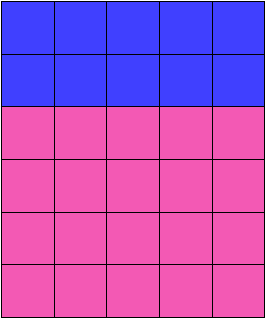}
			\caption{$A$.}
		\end{subfigure}
		\hfill
		\begin{subfigure}[b]{0.1\textwidth}
			\centering
			\raisebox{1.38\hsize}{\includegraphics[width=\textwidth]{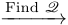}}
		\end{subfigure}
		\hfill
		\begin{subfigure}[t]{0.23\textwidth}
			\includegraphics[width=\textwidth]{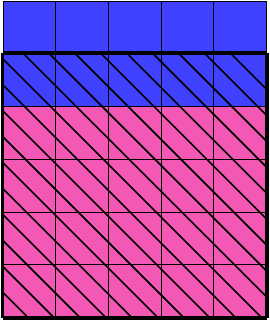}
			\caption{$\mathscr{Q}$ (shaded).}
		\end{subfigure}
		\hfill
		\begin{subfigure}[b]{0.18\textwidth}
			\centering
			\raisebox{0.766666667\hsize}{{\includegraphics[width=\textwidth]{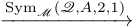}}}
		\end{subfigure}
		\hfill
		\begin{subfigure}[t]{0.23\textwidth}
			\includegraphics[scale=0.8]{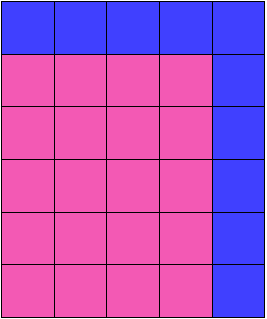}
			\caption{$\Sym_{\mathscr{M}}(\mathscr{Q}, A, 2,1)$.}
		\end{subfigure}
		
		\caption{Corollary \ref{rectanglesOneMore}: $|A|=k\ell_1$ implies $A$ is optimal.}
		\label{Case_1_rectanglesOneMore}
	\end{figure}
	If $|A| = k\ell_1$ for some $k\in \N$, then $\Sym_{\mathscr{M}}(\mathscr{Q}, A, 2,1)$ (Figure \ref{Case_1_rectanglesOneMore}) is a symmetrization of $\mathscr{M}_{\mathcal{L}}^{-1} (|A|)$ because $\ell_1 + 1 = \ell_2$,
	whence is optimal by Theorem \ref{rectangleProof}.
	Similarly, if $|A| \leq \ell_1$ or $|A|\geq \ell_1(\ell_2-1)$ then $A$ is a symmetrization of $\mathscr{M}_{\mathcal{L}}^{-1} (|A|)$.
	The other direction follows from Lemma \ref{lemmaForAll}.
	
	The second claim now follows immediately because for every optimal colexicographic type set we have a lexicographic type set of the same weight.
\end{proof}

\begin{cor}\label{exactLex}
	Suppose that $\mathscr{M} = \mathscr{M}_{[2]}(\ell_1,\ell_2)$ with $1<\ell_1 < \ell_2-1$ and with a rank increasing and rank constant weight function.
	Also, let $A\subseteq \mathscr{M}$ be a downset.
	\begin{enumerate}
		\item Suppose that $A = \mathscr{M}_{\mathcal{C}}^{-1} (|A|)$, 
		then $A$ is optimal if and only if $|A| \leq \ell_1$ or $|A|\geq \ell_1(\ell_2-1)$.
		\item If $A = \mathscr{M}_{\mathcal{L}}^{-1} (|A|)$ then $A$ is optimal.
	\end{enumerate}
\end{cor}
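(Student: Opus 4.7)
My plan is to establish two key facts: (A) in the edge ranges $m \leq \ell_1$ or $m \geq \ell_1(\ell_2-1)$, the colex and lex initial segments of size $m$ have equal weight; and (B) in the middle range $\ell_1 < m < \ell_1(\ell_2-1)$, the colex initial segment has strictly smaller weight than the lex initial segment. Both claims of the corollary then follow from Theorem \ref{rectangleProof}, which forces the optimal weight to equal $\max\{\wt(\mathscr{M}_{\mathcal{L}}^{-1}(m)), \wt(\mathscr{M}_{\mathcal{C}}^{-1}(m))\}$: in the edge ranges both initial segments attain the optimum (giving the ``if'' direction of claim (1) and claim (2) there), and in the middle range only the lex initial segment does (giving the ``only if'' direction of claim (1) and claim (2) there).

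For fact (A), when $m \leq \ell_1$ the colex initial segment is a single bottom row and the lex initial segment is a single left column, both of length $m$; they fit inside the packed square $\mathscr{Q} = \{(a, b) \in \mathscr{M} : 0 \leq a, b \leq m-1\}$ (valid as a packed poset since $m \leq \ell_1 \leq \ell_2$), and a direct computation verifies $\mathscr{M}_{\mathcal{C}}^{-1}(m) = \Sym_{\mathscr{M}}(\mathscr{Q}, \mathscr{M}_{\mathcal{L}}^{-1}(m), 1, 2)$. Since symmetrization preserves weight, the two initial segments are equiweight. The range $m \geq \ell_1(\ell_2-1)$ is handled by the mirror argument using a packed square anchored at the top corner $(\ell_1-1, \ell_2-1)$.

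For fact (B), Lemma \ref{lemmaForAll} reduces to the case $m = k\ell_1$ with $1 < k < \ell_2-1$, so that $A := \mathscr{M}_{\mathcal{C}}^{-1}(m) = \{(a, b) \in \mathscr{M} : 0 \leq a \leq \ell_1-1,\ 0 \leq b \leq k-1\}$ is a wide-short rectangle. I derive a contradiction via reflect-push. If $k < \ell_1$, I symmetrize $A$ about $(2,1)$ within the packed square $\mathscr{Q}_1 = \{(a, b) : 0 \leq a, b \leq \ell_1-1\}$ to obtain the equal-weight narrow-tall rectangle $B = \{(a, b) : 0 \leq a \leq k-1,\ 0 \leq b \leq \ell_1-1\}$; a direct check shows the first gap in column $k-1$ of $B$ occurs at height $\ell_1$, so $v_y = \ell_1 < \ell_2 - 1$, contradicting Lemma \ref{case2.3}. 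If $k \geq \ell_1$, I take $O$ to be (the top $\ell_1$ elements of) the rightmost column of $A$, reflect inside a translated copy of $\mathscr{Q}_1$ sitting at the top of $A$ to obtain the top row of $A$ as $R$, and push $R$ into column $0$ extended above $A$. The hypothesis $\ell_2 > \ell_1 + 1$ always provides enough vertical room for a strict weight increase under the bijection $\sigma$.

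The hard part will be the sub-case $k \geq \ell_1$ of fact (B): the naive single-column push of $R$ into column $0$ above $A$ can overflow $\mathscr{M}$ when $\ell_2$ is near its lower bound $\ell_1 + 2$, so $P$ must be spread across the first few columns above $A$ while preserving the downset property of $(A \setminus O) \cup P$ and the rank inequalities required by Lemma \ref{reflect-push}. Carrying out this book-keeping carefully, splitting on whether $\ell_2 \geq k + \ell_1$ or not, is the main technical obstacle; once handled, each case concludes cleanly by a direct application of Lemma \ref{reflect-push}.
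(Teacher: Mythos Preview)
Your plan is essentially correct and tracks the paper closely through fact~(A), the appeal to Lemma~\ref{lemmaForAll}, and the sub-case $k<\ell_1$ (where the paper likewise symmetrizes in $\mathscr{Q}_1=\{(a,b):0\le a,b\le \ell_1-1\}$ and then invokes Lemma~\ref{case2.3}). The divergence is only in the sub-case $k\ge \ell_1$. There the paper does \emph{not} attempt a direct reflect--push from the top row into column~$0$; instead it symmetrizes $A$ about $(2,1)$ inside the shifted square
\[
\mathscr{Q}_2=\{(a,b):0\le a\le \ell_1-1,\ k-\ell_1+1\le b\le k\}
\]
(so $\mathscr{Q}_2$ overlaps $A$ in its bottom $\ell_1-1$ rows), producing an equal-weight downset whose rightmost column is short. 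This symmetrized set now violates one of the structural lemmas from the proof of Theorem~\ref{rectangleProof}, and one concludes immediately without ever worrying about vertical room. Your approach---pushing $R$ into column~$0$ above $A$---is also valid, but as you correctly flag, it overflows when $k>\ell_2-\ell_1$ and then requires spreading $P$ over several columns and re-checking both the downset property and the rank bijection of Lemma~\ref{reflect-push}. That book-keeping can be carried out (for instance, taking $P$ to be row~$k$ restricted to columns $0,\dots,\ell_1-2$ together with the single point $(0,k+1)$ already suffices and gives a strict increase), so there is no genuine gap; but the paper's shifted-square symmetrization buys you a one-line finish with no overflow cases at all.
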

\begin{proof}
	We prove the first claim.
	So, suppose that $A = \mathscr{M}_{\mathcal{C}}^{-1} (|A|)$.
	The implication, 
	if $|A| \leq \ell_1$ or $|A|\geq \ell_1(\ell_2-1)$ then $A$ is optimal,
	follows from Theorem \ref{rectangleProof},
	since in this case $\mathscr{M}_{\mathcal{C}}^{-1} (|A|)$ and $\mathscr{M}_{\mathcal{L}}^{-1} (|A|)$ are symmetrizations of each other
	and whence have the same weight.
	We handle the other implication.
	For Lemma \ref{lemmaForAll} we get that $|A| \leq \ell_1$ or $|A|\geq \ell_1(\ell_2-1)$ or $|A| = k\ell_1$ for some $k\in \N$.
	Thus, we assume to the contrary that $\ell_1 < |A| < \ell_1(\ell_2-1)$ and $|A| = k\ell_1$ for some $k\in \N$, and show that $A$ is not optimal in this case.
	We handle two cases, one when $\ell_1 < |A| < \ell_1\ell_1$ and another when $\ell_1\ell_1 \leq |A| < \ell_1(\ell_2-1)$.
	
	So, suppose $\ell_1 < |A| < \ell_1\ell_1$. Let (Figure \ref{Case__1_corForAll})
	\begin{align*}
		\mathscr{Q}_1 = \{(a,b)\in \mathscr{M} \bigm | 0\leq a,b \leq \ell_1-1\}.
	\end{align*}
	We consider $\Sym_{\mathscr{M}}(\mathscr{Q}_1, A, 2,1)$ (Figure \ref{Case__1_corForAll}),
	and $\Sym_{\mathscr{M}}(\mathscr{Q}_1, A, 2,1)$ is not optimal by Lemma \ref{case2.3}.
	\begin{figure}
		\centering
		\begin{subfigure}[t]{0.23\textwidth}
			\includegraphics[width=\textwidth]{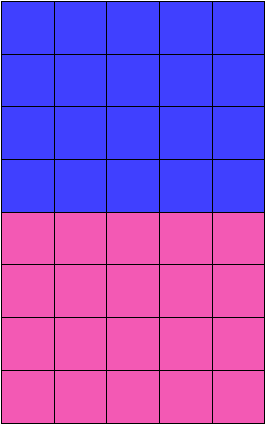}
			\caption{$A$.}
		\end{subfigure}
		\hfill
		\begin{subfigure}[b]{0.1\textwidth}
			\centering
			\raisebox{1.84\hsize}{\includegraphics[width=\textwidth]{section3/2D_Pictures/Case_1_lemmaForAll_arrow_1.pdf}}
		\end{subfigure}
		\hfill
		\begin{subfigure}[t]{0.23\textwidth}
			\includegraphics[width=\textwidth]{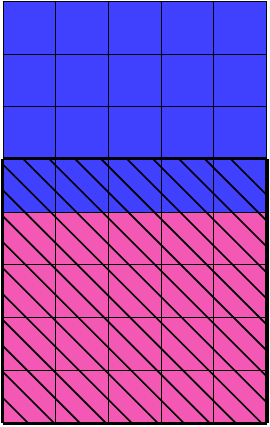}
			\caption{$\mathscr{Q}_1$ (shaded).}
		\end{subfigure}
		\hfill
		\begin{subfigure}[b]{0.18\textwidth}
			\centering
			\raisebox{1.022222222\hsize}{\includegraphics[width=\textwidth]{section3/2D_Pictures/Case_1_lemmaForAll_arrow_2.pdf}}
		\end{subfigure}
		\hfill
		\begin{subfigure}[t]{0.23\textwidth}
			\includegraphics[width=\textwidth]{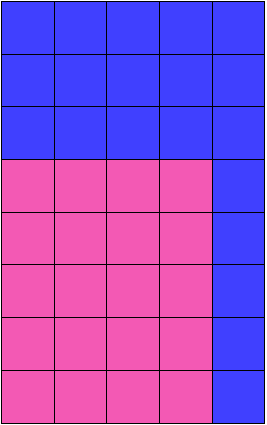}
			\caption{$\Sym_{\mathscr{M}}(\mathscr{Q}_1, A, 2,1)$.}
		\end{subfigure}
		
		\caption{Corollary \ref{exactLex} with $\ell_1 < |A| < \ell_1\ell_1$: $\Sym_{\mathscr{M}}(\mathscr{Q}_2, A, 2,1)$ is not optimal by Lemma \ref{case2.3}.}
		\label{Case__1_corForAll}
	\end{figure}
	
	Next, we handle the case $\ell_1\ell_1 \leq |A| < \ell_1(\ell_2-1)$.
	Let $(\ell_1-1, y)\in \mathscr{M}$ be the first element in $\mathscr{M}_{\{2\}}(\ell_1-1)\setminus A$.
	Also, put (Figure \ref{Case__2_corForAll})
	\begin{align*}
		\mathscr{Q}_2 = \{(a,b)\in \mathscr{M} \bigm | 0\leq a \leq \ell_1-1 \text{ and } y-\ell_1 + 1 \leq b\leq y\}
	\end{align*}
	We consider $\Sym_{\mathscr{M}}(\mathscr{Q}_2, A, 2,1)$ (Figure \ref{Case__2_corForAll}),
	and $\Sym_{\mathscr{M}}(\mathscr{Q}_2, A, 2,1)$ is not optimal by Lemma \ref{case2.4}.
	\begin{figure}
		\centering
		\begin{subfigure}[t]{0.23\textwidth}
			\includegraphics[width=\textwidth]{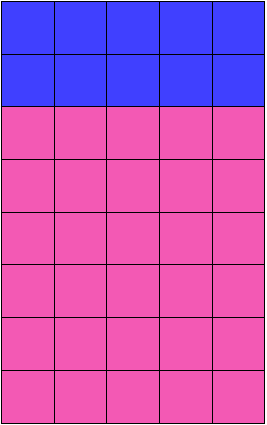}
			\caption{$A$.}
		\end{subfigure}
		\hfill
		\begin{subfigure}[b]{0.1\textwidth}
			\centering
			\raisebox{1.84\hsize}{\includegraphics[width=\textwidth]{section3/2D_Pictures/Case_2_2_lemmaForAll_arrow_1.pdf}}
		\end{subfigure}
		\hfill
		\begin{subfigure}[t]{0.23\textwidth}
			\includegraphics[width=\textwidth]{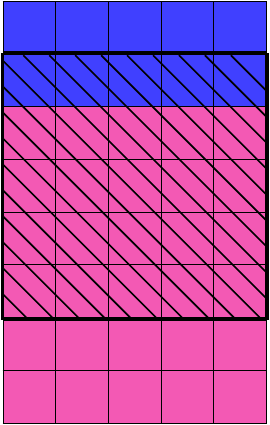}
			\caption{$\mathscr{Q}_2$ (shaded).}
		\end{subfigure}
		\hfill
		\begin{subfigure}[b]{0.18\textwidth}
			\centering
			\centering
			\raisebox{1.022222222\hsize}{\includegraphics[width=\textwidth]{section3/2D_Pictures/Case_2_2_lemmaForAll_arrow_2.pdf}}
		\end{subfigure}
		\hfill
		\begin{subfigure}[t]{0.23\textwidth}
			\includegraphics[width=\textwidth]{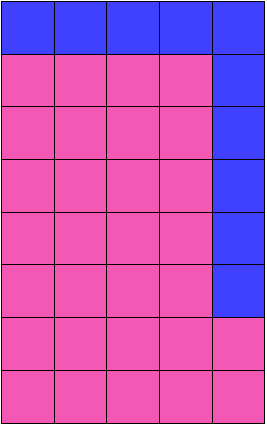}
			\caption{$\Sym_{\mathscr{M}}(\mathscr{Q}_2, A, 2,1)$.}
		\end{subfigure}
		
		\caption{Corollary \ref{exactLex} with $\ell_1\ell_1 \leq |A| < \ell_1(\ell_2-1)$: $\Sym_{\mathscr{M}}(\mathscr{Q}_2, A, 2,1)$ is not optimal by Lemma \ref{case2.4}.}
		\label{Case__2_corForAll}
	\end{figure}
	Therefore, the claim holds.
	
	The second claim now follows immediately because for every optimal colexicographic type set we have a lexicographic type set of the same weight.
\end{proof}

Of course, there are corresponding results when the domination orders switch places.

\begin{cor}\label{rectanglesOneMoreCoLex}
	Suppose that $\mathscr{M} = \mathscr{M}_{[2]}(\ell_1,\ell_2)$ with $1<\ell_2 = \ell_1 -1$ and a rank increasing and rank constant weight function.
	Also, let $A\subseteq \mathscr{M}$ be a downset.
	\begin{enumerate}
		\item Suppose that $A = \mathscr{M}_{\mathcal{L}}^{-1} (|A|)$, 
		then $A$ is optimal if and only if $|A| \leq \ell_2$ or $|A|\geq \ell_2(\ell_1-1)$ or $|A| = k\ell_2$ for some $k\in \N$.
		\item If $A = \mathscr{M}_{\mathcal{C}}^{-1} (|A|)$ then $A$ is optimal.
	\end{enumerate}
\end{cor}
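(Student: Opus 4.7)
The plan is to reduce the corollary to Corollary \ref{rectanglesOneMore} using a coordinate swap. Consider the map $\varphi \colon \mathscr{M}_{[2]}(\ell_1,\ell_2) \to \mathscr{M}_{[2]}(\ell_2,\ell_1)$ sending $(a,b) \mapsto (b,a)$. This is a rank-preserving poset isomorphism, so (the weight function being rank constant and rank increasing) it takes downsets to downsets of the same weight and carries optimal sets to optimal sets. Reading Definitions \ref{lexicographicOrder} and \ref{colexicographicOrder} against one another, $\varphi$ intertwines $\mathcal{L}$ on the source with $\mathcal{C}$ on the target, and vice versa, since $\mathcal{C}$ is precisely the domination order induced by the reversing permutation. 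In particular, $\varphi$ sends $\mathscr{M}_{\mathcal{L}}^{-1}[m]$ in the source to $\mathscr{M}_{\mathcal{C}}^{-1}[m]$ in the target.

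With the hypothesis $\ell_2 = \ell_1 - 1$, the target lattice has first length $\ell_2$ strictly less than its second length $\ell_1$ by exactly one, placing it in the setting of Corollary \ref{rectanglesOneMore} with renamed parameters $\ell_1' = \ell_2$ and $\ell_2' = \ell_1$. Applying the first claim of that corollary to $\varphi(A) = \mathscr{M}_{\mathcal{C}}^{-1}[|A|]$ in the target gives optimality if and only if $|A| \leq \ell_1' = \ell_2$, or $|A| \geq \ell_1'(\ell_2' - 1) = \ell_2(\ell_1 - 1)$, or $|A| = k\ell_1' = k\ell_2$ for some $k \in \N$. Pulling back via $\varphi$ yields the first claim of the present corollary. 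The second claim then follows in the same way from the second claim of Corollary \ref{rectanglesOneMore}, since $\varphi$ sends $\mathscr{M}_{\mathcal{C}}^{-1}[|A|]$ in the source to $\mathscr{M}_{\mathcal{L}}^{-1}[|A|]$ in the target.

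I expect no serious obstacle. The only thing to check carefully is that $\varphi$ really does intertwine $\mathcal{L}$ and $\mathcal{C}$, which is a one-line unravelling of the definitions. Alternatively, one could replay the proof of Corollary \ref{rectanglesOneMore} verbatim with the roles of the two coordinates exchanged, using the packed poset $\mathscr{Q} = \{(a,b) \in \mathscr{M} \mid 0 \leq a,b \leq \ell_2 - 1\}$ and invoking Theorem \ref{rectangleProof} together with Lemma \ref{lemmaForAll} in the analogous way; the symmetry argument above is simply the cleaner packaging.
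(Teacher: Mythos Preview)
Your proposal is correct and matches the paper's own approach: the paper states this corollary without proof, prefacing it only with the remark that ``there are corresponding results when the domination orders switch places,'' which is precisely your coordinate-swap symmetry reducing to Corollary~\ref{rectanglesOneMore}. Your explicit verification that $\varphi$ intertwines $\mathcal{L}$ and $\mathcal{C}$ and that the pushed-forward weight function remains rank constant and rank increasing fills in exactly the details the paper leaves implicit.
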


\begin{cor}\label{exactCoLex}
	Suppose that $\mathscr{M} = \mathscr{M}_{[2]}(\ell_1,\ell_2)$ with $1<\ell_2 < \ell_1-1$ and with a rank increasing and rank constant weight function.
	Also, let $A\subseteq \mathscr{M}$ be a downset.
	\begin{enumerate}
		\item Suppose that $A = \mathscr{M}_{\mathcal{L}}^{-1} (|A|)$, 
		then $A$ is optimal if and only if $|A| \leq \ell_2$ or $|A|\geq \ell_2(\ell_1-1)$.
		\item If $A = \mathscr{M}_{\mathcal{C}}^{-1} (|A|)$ then $A$ is optimal.
	\end{enumerate}
\end{cor}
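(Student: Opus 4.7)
The plan is to observe that Corollary \ref{exactCoLex} is the mirror image of Corollary \ref{exactLex} under the coordinate-swap isomorphism. Specifically, the map $(a,b) \mapsto (b,a)$ defines a poset isomorphism $\varphi: \mathscr{M}_{[2]}(\ell_1,\ell_2) \to \mathscr{M}_{[2]}(\ell_2,\ell_1)$ that preserves rank, and therefore carries rank-increasing and rank-constant weight functions to weight functions of the same type. Moreover, $\varphi$ sends the lexicographic order on the source to the colexicographic order on the target, and vice versa, so initial segments of $\mathcal{L}$ on $\mathscr{M}_{[2]}(\ell_1,\ell_2)$ correspond bijectively to initial segments of $\mathcal{C}$ on $\mathscr{M}_{[2]}(\ell_2,\ell_1)$. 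Under the hypothesis $1 < \ell_2 < \ell_1-1$, the target lattice satisfies the hypothesis of Corollary \ref{exactLex} (with the roles of $\ell_1$ and $\ell_2$ interchanged), so both statements of Corollary \ref{exactCoLex} follow by transport along $\varphi$.

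Alternatively, one could simply repeat the proof of Corollary \ref{exactLex} verbatim with the roles of the two coordinates swapped. The second claim is immediate: for every optimal colexicographic-type set there is a lexicographic-type set of the same weight (the two are symmetrizations of each other inside a common packed square), and Theorem \ref{rectangleProof} does the rest. For the first claim, when $|A| \leq \ell_2$ or $|A| \geq \ell_2(\ell_1-1)$ the set $\mathscr{M}_{\mathcal{L}}^{-1}(|A|)$ is itself a symmetrization of $\mathscr{M}_{\mathcal{C}}^{-1}(|A|)$ inside a suitable $\ell_2 \times \ell_2$ packed square (or its translate near the top), so the two sets have equal weight and optimality of one implies optimality of the other.

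For the converse direction, I would invoke the mirrored version of Lemma \ref{lemmaForAll} (again by swapping coordinates and interchanging $\mathcal{L}$ with $\mathcal{C}$) to reduce to the case $\ell_2 < |A| < \ell_2(\ell_1-1)$ with $|A|=k\ell_2$ for some $k \in \N$. We then split into two sub-ranges: when $\ell_2 < |A| < \ell_2^2$ we use a packed $\ell_2 \times \ell_2$ square at the corner and Lemma \ref{case2.3} to show non-optimality of a symmetrization of $A$, while when $\ell_2^2 \leq |A| < \ell_2(\ell_1-1)$ we use a packed $\ell_2 \times \ell_2$ square positioned at the last occupied row and Lemma \ref{case2.4}. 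These are precisely the mirror images of the two reflect-push arguments used in Corollary \ref{exactLex}.

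The only obstacle is bookkeeping: one must verify that the coordinate swap sends each packed poset, each direction of symmetrization, and each of the bounds $\ell_2$ and $\ell_2(\ell_1-1)$ to its intended mirror counterpart. Since Lemma \ref{reflect-push} and Lemmas \ref{case2.1}--\ref{case2.4} are formulated symmetrically in the two coordinates, no genuinely new argument is required, which is why the quickest presentation is just to cite Corollary \ref{exactLex} applied to $\mathscr{M}_{[2]}(\ell_2,\ell_1)$.
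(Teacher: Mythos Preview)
Your proposal is correct and matches the paper's approach exactly: the paper does not give a separate proof of Corollary~\ref{exactCoLex}, but simply states ``Of course, there are corresponding results when the domination orders switch places'' after proving Corollary~\ref{exactLex}, which is precisely your coordinate-swap argument. One minor slip in your alternative paragraph: for the second claim of Corollary~\ref{exactCoLex} the relevant direction is that every optimal \emph{lexicographic}-type set has a \emph{colexicographic}-type set of the same weight (not the reverse), but your primary transport-of-structure argument via $\varphi$ is unaffected by this.
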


Putting everything together we are able to strengthen Lindsay's edge-isoperimetric inequality in two dimensions again.
This time we get to say that the solution Lindsay found is unique, 
and the same solution remains unique when we use a rank increasing and rank constant weight function.

\begin{cor}\label{nestedSolutionsRectangles}
	Suppose that $\mathscr{M} = \mathscr{M}_{[2]}(\ell_1,\ell_2)$ with a rank increasing and rank constant weight function.
	Then we have:
	\begin{enumerate}
		\item If $\ell_1=\ell_2$ then $\mathcal{L}$ and $\mathcal{C}$ give the only nested solutions for $\mathscr{M}$.
		\item If $\ell_1=1$ or $\ell_2=1$ then there is a unique total order on $\mathscr{M}$,
		and hence $\mathcal{L}$ and $\mathcal{C}$ give the only nested solutions for $\mathscr{M}$.
		\item If $1<\ell_1 < \ell_2$ then $\mathcal{L}$ gives the only nested solutions for $\mathscr{M}$.
		\item If $1<\ell_2 < \ell_1$ then $\mathcal{C}$ gives the only nested solutions for $\mathscr{M}$.
	\end{enumerate}
\end{cor}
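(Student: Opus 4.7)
Existence for items 1, 3, and 4 follows at once from the preceding corollaries: initial segments of a toset are automatically nested, and Corollaries \ref{squares}, \ref{rectanglesOneMore}, \ref{exactLex}, \ref{rectanglesOneMoreCoLex}, and \ref{exactCoLex} guarantee the optimality of the claimed initial segments at every size. Item 2 is immediate, since when $\ell_1=1$ or $\ell_2=1$, $\mathscr{M}$ is a chain with a unique linear extension, which is simultaneously $\mathcal{L}$ and $\mathcal{C}$.

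The substance is uniqueness. Let $(A_m)_m$ be any nested sequence of optimal downsets. The plan is to prove by induction on $m$ that $A_m$ coincides with the claimed initial segment. In the inductive step one has $A_{m+1}=A_m\cup\{x\}$ for some minimal $x\in\mathscr{M}\setminus A_m$, and by Theorem \ref{rectangleProof} the set $A_{m+1}$ is either an initial segment of $\mathcal{L}$ or $\mathcal{C}$, or a symmetrization of one of these through a packed poset. In cases 3 and 4, the results of Section \ref{rectanglesExact} forbid the ``wrong'' domination order at middle sizes and cut the possibilities for $x$ down to two: the next element in the fixed domination order, or the bottom of the first column (respectively row) not yet visited. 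I would rule out the second option by showing that its only admissible extension $A_{m+2}$ in the nested chain has strictly smaller weight than the corresponding lex (respectively colex) initial segment of size $m+2$, which is a direct consequence of the weight function being strictly rank-increasing.

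Case 1 is the step I expect to need the most care. Here both $\mathcal{L}$ and $\mathcal{C}$ are legitimate nested sequences, and at intermediate sizes a nested sequence has a genuine choice of next element; in fact there exist proper symmetrizations of the relevant initial segment that are optimal of size $m+1$ and contain $A_m$. The key observation is that the first time the sequence picks $\mathcal{L}^{-1}[m+1]$ over $\mathcal{C}^{-1}[m+1]$ (or vice versa), or any time it takes a step corresponding to a proper symmetrization rather than an initial segment, the whole tail of the sequence is forced; in particular, after a proper symmetrization the next step is obliged to add an element whose inclusion pushes $A_{m+2}$ outside the list of optimal downsets given by Corollary \ref{fullRectangleProof}. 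A small-scale analysis built around Corollary \ref{fullRectangleProof}'s explicit description of which packed posets can appear in a symmetrization closes the argument, leaving exactly the two announced nested sequences.
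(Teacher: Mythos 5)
Your overall strategy---induction along the nested chain, using Theorem \ref{rectangleProof} to classify each $A_{m+1}$ and the results of Section \ref{rectanglesExact} to exclude the wrong domination order at intermediate sizes---is sound, and it supplies detail that the paper's own proof (a one-line citation of Theorem \ref{rectangleProof} and Corollaries \ref{squares}--\ref{exactCoLex}) leaves implicit. However, the step you use to kill a deviating branch looks only two moves ahead, and as stated it is false. Take $1<\ell_1<\ell_2$ and $A_m=\mathscr{M}_{\mathcal{L}}^{-1}[m]$ consisting of $k$ full columns plus the single element $(k,0)$. Adding the bottom $(k+1,0)$ of the next column produces an optimal $A_{m+1}$ ($k$ full columns plus two elements of row $0$), but its admissible extensions are not exhausted by $(k,1)$: one may also add $(k+2,0)$ whenever $k+2<\ell_1$, and the resulting set ($k$ full columns plus three elements of row $0$) has exactly the same weight as $\mathscr{M}_{\mathcal{L}}^{-1}[m+2]$, because the ranks $k,k+1,k+2$ collected along the row coincide with those collected along the column. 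It is a \ref{type2.1}-symmetrization of the lexicographic segment and is optimal, so your claim that the only admissible $A_{m+2}$ has strictly smaller weight fails. The same phenomenon defeats the parallel claim in your square case: after the first proper symmetrization the chain can continue through longer proper symmetrizations (the row branch keeps extending) for many further steps, not just one.

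The gap is repairable, but it needs an unbounded look-ahead rather than a two-step one. Once the chain enters the row branch at column $k$ it is forced to keep extending row $0$: at row length $j\geq 2$ the only other admissible addition is $(k,1)$, of rank $k+1<k+j$, which is strictly suboptimal since the lexicographic segment of that size acquires an element of rank $k+j$. The branch therefore survives until row $0$ is exhausted, at which point the unique minimal element of the complement is $(k,1)$, of rank $k+1$, while the lexicographic segment of the same size acquires an element of rank $\ell_1$; since the branch only exists for $k\leq \ell_1-2$ this is a strict loss of weight, and the chain dies there. You should either carry out this terminal analysis explicitly, or recast the induction as the statement that every \emph{complete} nested chain agrees with $\mathcal{L}$, showing that any deviation eventually reaches a size admitting no optimal extension. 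The same correction is needed, mutatis mutandis, in cases 1 and 4; item 2 and the existence assertions are fine as you have them.
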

\begin{proof}
	Theorem \ref{rectangleProof} and corollaries \ref{squares}, \ref{rectanglesOneMore}, \ref{exactLex}, \ref{rectanglesOneMoreCoLex} 
	and \ref{exactCoLex} prove the claim immediately.
\end{proof}

The results in this section give a complete picture of the behavior of optimal downsets in $\mathscr{M}(\ell_1,\ell_2)$.
However, they say something even deeper about the typical approach to higher dimensional isoperimetric problems.
This is discussed in Section \ref{appliations}.

\section{Optimal Downsets In Right Triangles}\label{triangles}

\begin{dfn}[Right Simplices]\label{trapezoids}
	Suppose that we have a multiset lattice $\mathscr{M} = \mathscr{M}_{[d]}(\ell,\dots, \ell)$.
	The \textit{right simplex induced from} $\mathscr{M}$ is the set of all increasing sequences in $\mathscr{M}$, we denote it by
	\begin{align*}
		\mathscr{R}=\mathscr{R}_{[d]}(\ell) = \{(x_1, x_2,\dots, x_d) \in \mathscr{M} \bigm | x_1\leq x_2 \leq \cdots \leq x_d\}.
	\end{align*}
	If $d=2$ then we call $\mathscr{R}$ a \textit{right triangle}.
\end{dfn}

Right triangles are subposets of multiset lattices.
We can talk about the same partial orders and total orders on right triangles that are just induced from the orders on multiset lattices.
For any domination order $\mathcal{D}$, we will write $\mathscr{R}_{\mathcal{D}}$ 
to denote the totally ordered set induced from the corresponding totally ordered multiset lattice $\mathscr{M}_{\mathcal{D}}$.
Often we will write $\mathscr{R}_{\pi}$ for $\mathscr{R}_{D}$, where $\pi \in \mathfrak{S}_d$ is the permutation corresponding to the domination order $\mathcal{D}$.
Similarly, any weight function on a multiset lattice will induce a weight function on the corresponding right triangle.
We can define subproducts like in the case for multiset lattices, for any $S\subseteq [d]$ we define $\mathscr{R}_S = \mathscr{M}_S\cap \mathscr{R}$.
Similarly, for a set of coordinates $S\subseteq [d]$ and a point $x\in \mathscr{R}_{[d]\setminus S}$ we define $\mathscr{R}_S(x) = \mathscr{R}\cap \mathscr{M}_{S}(x)$.

\begin{exm}
	Some examples of the basic properties on right triangles that are induced from multiset lattices can be seen on Figure \ref{trianglesBasics}.
\end{exm}

\begin{figure}
	\centering
	\begin{subfigure}[t]{0.32\textwidth}
		\includegraphics[width=\textwidth]{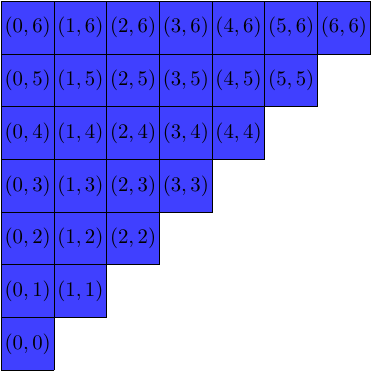}
		\caption{$\mathscr{R}_{[2]}(7)$.}
	\end{subfigure}
	\hfill
	\begin{subfigure}[t]{0.32\textwidth}
		\includegraphics[width=\textwidth]{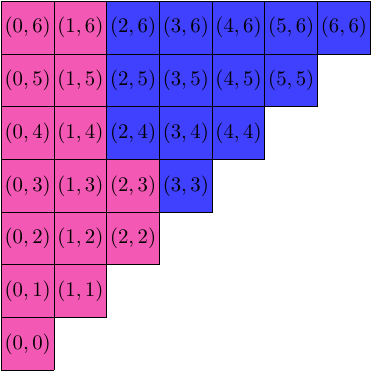}
		\caption{$(\mathscr{R}_{[2]}(7))_{\mathcal{L}}^{-1}[15]$.}
	\end{subfigure}
	\hfill
	\begin{subfigure}[t]{0.32\textwidth}
		\includegraphics[width=\textwidth]{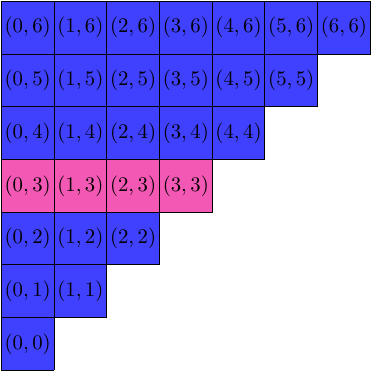}
		\caption{$(\mathscr{R}_{[2]}(7))_{\{1\}}(3)$.}
	\end{subfigure}

	\caption{A right triangle, an initial segment of $\mathcal{L}$, and a subproduct.}
	\label{trianglesBasics}
\end{figure}

Definition \ref{symmetrizationAnyPoset} even allows us to talk about symmetrization in right triangles.
With this in mind, one might ask if similar results to the ones for rectangles hold for right triangles in two dimensions.
It turns out that the main structure theorem is identical to the one for rectangles.
We state and prove this structure theorem after we introduce some tools that we use in the proof.

\begin{dfn}[Diagonal Points]
	Suppose that we have $\mathscr{R}=\mathscr{R}_{[2]}(\ell)$ induced from $\mathscr{M} = \mathscr{M}_{[2]}(\ell, \ell)$, and take a nonempty downset $A\subseteq \mathscr{R}$.
	We define the \textit{diagonal point of} $A$ to be the element $(x,x)\in A$ such that $(x+1,x+1)\not\in A$,
	and we denote it by $d(A)$.
	
	Next, for any $p=(y,y)\in \mathscr{R}$ we define the \textit{diagonal multiset lattice of} $p$,
	to be the largest subposet of $\mathscr{R}$ that contains $p$ as its bottom right corner,
	\begin{align*}
		\mathscr{Q}_p= \{(a,b)\in \mathscr{R} \bigm | 0\leq a \leq y \text{ and } y \leq b \leq \ell -1\}.
	\end{align*}

	Finally we define the \textit{diagonal multiset lattice of} $A$ to be the subposet of $\mathscr{R}$,
	\begin{align*}
		\mathscr{Q}_A = \mathscr{Q}_{d(A)}.
	\end{align*}
	
\end{dfn}

\begin{exm}
	A diagonal point and diagonal multiset lattice can be seen in Figure \ref{trianglesTricks}.
\end{exm}

\begin{figure}
	\centering
	\begin{subfigure}[t]{0.4\textwidth}
		\includegraphics[width=\textwidth]{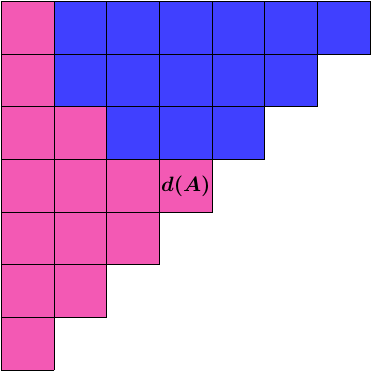}
		\caption{$A$ with $d(A)$.}
	\end{subfigure}
	\hfill
	\begin{subfigure}[t]{0.4\textwidth}
		\includegraphics[width=\textwidth]{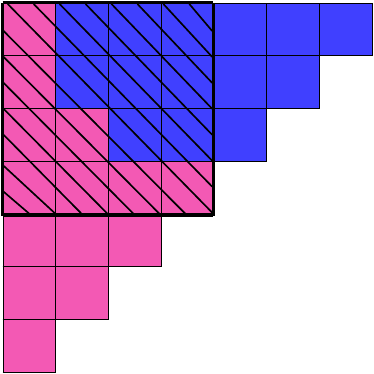}
		\caption{$A$ with $\mathscr{Q}_A$.}
	\end{subfigure}
	
	\caption{A downset $A$ with $d(A)$ and $\mathscr{Q}_A$.}\label{trianglesTricks}
\end{figure}

\begin{thm}\label{triangleProof}
	Suppose that $\mathscr{R} = \mathscr{R}_{[2]}(\ell)$ 
	is a right triangle with a rank increasing and rank constant weight function.
	If $A\subseteq \mathscr{R}$ is an optimal downset then one of the following statements must hold:
	\begin{enumerate}
		\item $A$ is an initial segment of a domination order: that is, there exists $\pi\in \mathfrak{S}_2$ such that $A=\mathscr{R}_{\pi}^{-1}[|A|]$.
		\item $A$ is a symmetrization (using a packed poset) of an initial segment of a domination order: that is,
		there exist $\pi\in \mathfrak{S}_2$, a packed poset $\mathscr{Q}\subseteq \mathscr{R}$ and coordinates $c_1,c_2\in [2]$,
		such that $A = \Sym_{\mathscr{R}}(\mathscr{Q},\mathscr{R}_{\pi}^{-1}[|A|], c_1,c_2 )$.
	\end{enumerate}
\end{thm}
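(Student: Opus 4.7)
The plan is to reduce to Theorem \ref{rectangleProof} via a decomposition of $A$ along its diagonal point. After handling the trivial cases $A\in\{\emptyset,\mathscr{R}\}$, let $(x,x)=d(A)$. Since $(x,x)\in A$, the full subtriangle $\mathscr{R}_{[2]}(x+1)=\{(a,b)\in\mathscr{R}:a\le b\le x\}$ lies in $A$, and since $(x+1,x+1)\notin A$, every element of $A$ with second coordinate exceeding $x$ has first coordinate at most $x$. Consequently, writing $\mathscr{N}=\{(a,b)\in\mathscr{R}:0\le a\le x,\ x+1\le b\le \ell-1\}$, which is a packed subposet of $\mathscr{R}$ isomorphic to $\mathscr{M}_{[2]}(x+1,\ell-x-1)$, and setting $T=A\setminus\mathscr{R}_{[2]}(x+1)$, one obtains a disjoint decomposition $A=\mathscr{R}_{[2]}(x+1)\sqcup T$ in which $T$ is a downset of $\mathscr{N}$.

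The next step is to show that $T$ is optimal in $\mathscr{N}$ among downsets of size $|T|$. For any downset $T'$ of $\mathscr{N}$ with $|T'|=|T|$, the set $\mathscr{R}_{[2]}(x+1)\sqcup T'$ is a downset of $\mathscr{R}$ with the same size as $A$ and weight $\wt(A)+\wt(T')-\wt(T)$, so optimality of $A$ forces $\wt(T)\ge\wt(T')$. Applying Theorem \ref{rectangleProof} to the rectangle $\mathscr{N}$ then yields that $T$ is either an initial segment of some domination order $\mathcal{D}_\pi$ in $\mathscr{N}$, or a symmetrization of such an initial segment using a packed subposet $\mathscr{Q}'\subseteq\mathscr{N}$.

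Finally I would translate these structures back to $\mathscr{R}$. Any packed subposet of $\mathscr{N}$ is also a packed subposet of $\mathscr{R}$ (since $\mathscr{N}$ lies strictly above the diagonal), and symmetrization about $(c_1,c_2)$ produces the same set whether performed inside $\mathscr{N}$ or $\mathscr{R}$ because it only alters elements of $\mathscr{Q}'$. A case analysis on the structure of $T$ then shows that if $T$ fills $\mathscr{N}$ through its last column or row in the direction dictated by $\pi$, the lifted set $\mathscr{R}_{[2]}(x+1)\sqcup T$ is exactly the initial segment $\mathscr{R}_\pi^{-1}[|A|]$; otherwise, the lifted set exhibits a triangular overhang along the diagonal of $\mathscr{R}$ relative to the initial segment of $\mathcal{D}_\pi$ of the same size, and this overhang is compensated by a square packed poset anchored at the diagonal, with respect to which $A$ realizes the symmetrization $\Sym_{\mathscr{R}}(\mathscr{Q},\mathscr{R}_\pi^{-1}[|A|],c_1,c_2)$.

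The main obstacle is the final translation, especially when $T$ is itself a symmetrization in $\mathscr{N}$. Nominally one is then left with two symmetrization operations, one inside $\mathscr{N}$ and one straddling the diagonal, but the explicit description of the symmetrizing packed poset from Corollary \ref{fullRectangleProof} (anchored at the last element of the initial segment in $\mathscr{N}$) forces the two operations to involve compatible packed posets, which can be merged into a single square packed poset of $\mathscr{R}$. Verifying this compatibility—through a careful comparison of column and row fillings between $\mathscr{R}_{[2]}(x+1)\sqcup T$ and $\mathscr{R}_\pi^{-1}[|A|]$—is the bookkeeping heart of the argument.
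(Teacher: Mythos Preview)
Your setup is essentially the paper's own: decompose $A$ at its diagonal point, observe that the part above the diagonal is an optimal downset in a rectangle, and invoke Theorem \ref{rectangleProof}. (The paper uses $\mathscr{Q}_A$, which includes the row $b=x$, rather than your $\mathscr{N}$; this is a cosmetic difference.) Where your proposal goes wrong is the ``translation back'' step, which you describe as bookkeeping. It is not: the implication ``$T$ is of lex/colex type in $\mathscr{N}$ $\Rightarrow$ $A$ is of lex/colex type in $\mathscr{R}$'' is \emph{false} in general, so the argument as written does not close.

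Here is a concrete instance. Take $\ell=10$, $x=5$, so $\mathscr{N}\cong\mathscr{M}_{[2]}(6,4)$, and let $T$ be the colex initial segment of size $7$ in $\mathscr{N}$, namely $\{(a,6):0\le a\le 5\}\cup\{(0,7)\}$. Since $\ell_2=4<\ell_1=6$, Corollary~\ref{exactCoLex} says $T$ is optimal in $\mathscr{N}$. Now $A=\mathscr{R}_{[2]}(6)\cup T$ has $|A|=28$; the colex initial segment of that size in $\mathscr{R}$ is $\{(a,b):a\le b\le 6\}$, which differs from $A$ exactly in that it contains $(6,6)$ (rank $12$) in place of $(0,7)$ (rank $7$). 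Hence $A$ is not optimal, and one can check directly that $A$ is not a symmetrization of either $\mathscr{R}_{\mathcal L}^{-1}[28]$ or $\mathscr{R}_{\mathcal C}^{-1}[28]$: no square packed poset inside $\mathscr{R}$ can contain both $(0,7)$ and $(6,6)$. So an optimal $T$ can lift to an $A$ that is neither of the two claimed types; you must use optimality of $A$ in $\mathscr{R}$ beyond the single fact that $T$ is optimal in $\mathscr{N}$.

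This is exactly what the paper does. Lemmas~\ref{triangleLexLemma} and \ref{triangleColexLemma} do not merely read off the structure of $\mathscr{Q}_A\cap A$ from Theorem~\ref{rectangleProof}; they combine it with the exact optimality results of Section~\ref{rectanglesExact} and, crucially, with further reflect-push moves that leave the rectangle (e.g.\ the set $V$ in the final contradiction of Lemma~\ref{triangleColexLemma} lives in column $x+1$, outside $\mathscr{Q}_A$). Those extra moves are what rule out configurations like the one above. Your proposal needs to incorporate analogous arguments; the ``compatible packed posets can be merged'' picture does not capture the actual obstruction, which already appears when $T$ is a plain initial segment.
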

\begin{proof}
	We assume that $A \neq \emptyset$ and $A \neq \mathscr{R}$, since in these cases $A$ is an initial segment of both $\mathcal{L}$ and $\mathcal{C}$.
	Let $d(A)=(x,x)$, whence $\ell \geq 2$ and $x \neq \ell - 1$ since $A \neq \mathscr{R}$.
	If $x=0$ then $A\subseteq \mathscr{R}_{\{2\}}(0)$ and we have that $A$ is an initial segment on $\mathcal{L}$.
	If $x=\ell -2$ then $\mathscr{R}\setminus A \subseteq \mathscr{R}_{\{1\}}(\ell -1)$, hence $A$ is an initial segment of $\mathcal{C}$.
	Thus, we suppose that $\ell \geq 4$ and $1\leq x \leq \ell - 3$.
	
	Let $\mathscr{Q}_A \iso \mathscr{M}(\ell_1, \ell_2)$.
	We are going to handle the theorem by proving things about $\mathscr{Q}_A$.
	In order for $A$ to be optimal in $\mathscr{R}$, we need to have that $\mathscr{Q}_A\cap A$ is optimal in $\mathscr{Q}_A$.
	However, Theorem \ref{rectangleProof} and its corollaries restrict the options for $\mathscr{Q}_A\cap A$.
	
	\begin{lem}\label{triangleLexLemma}
		If $\mathscr{Q}_A\cap A$ is a lexicographic type set in $\mathscr{Q}_A$ then the claim holds.
	\end{lem}
	\begin{proof}
		First suppose that $(x,x+1)\in A$.
		Then $\mathscr{Q}_A\cap A$ must fall under \ref{type1Cor} or \ref{type2.2} of Corollary \ref{fullRectangleProof}, since type \ref{type2.1} requires $(x,x+1)\not\in A$.
		If $\mathscr{Q}_A\cap A$ falls under \ref{type1Cor} then $A$ is an initial segment of $\mathcal{L}$ (Figure \ref{triangleLexCase1A}).
		If $\mathscr{Q}_A\cap A$ falls under \ref{type2.2} then $A$ is a symmetrization of an initial segment of $\mathcal{L}$ (Figure \ref{triangleLexCase1B}).
		\begin{figure}
			\centering
			\begin{subfigure}[t]{0.4\textwidth}
				\includegraphics[width=\textwidth]{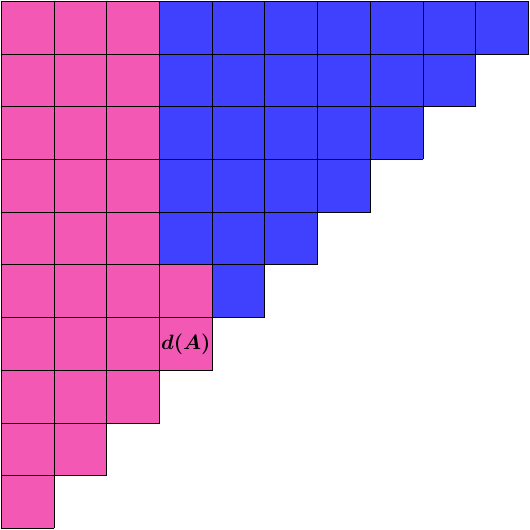}
				\caption{An initial segment of $\mathcal{L}$.}
				\label{triangleLexCase1A}
			\end{subfigure}
			\hfill
			\begin{subfigure}[t]{0.4\textwidth}
				\includegraphics[width=\textwidth]{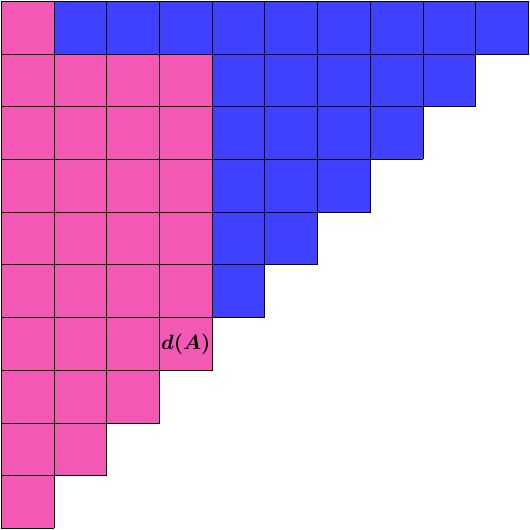}
				\caption{A symmetrization of an initial segment of $\mathcal{L}$.}
				\label{triangleLexCase1B}
			\end{subfigure}
			
			\caption{Lemma \ref{triangleLexLemma} with $(x,x+1)\in A$.}
			\label{triangleLexCase1}
		\end{figure}
	
		So, suppose that $(x,x+1)\not\in A$.
		If $|\mathscr{Q}_A\cap A|= (\ell_1-1)\ell_2 + 1$ then $\mathscr{Q}_A\cap A$ is an initial segment of $\mathcal{L}$, whence $A$ is an initial segment of $\mathcal{L}$.
		Assume that $|\mathscr{Q}_A\cap A|< (\ell_1-1)\ell_2 + 1$.
		Then $(x-1, \ell_2-1)\not \in A$, but this forces $\mathscr{Q}_A\cap A$ to fall under \ref{type2.1} of Corollary \ref{fullRectangleProof}, since $(x,x)\in \mathscr{Q}_A\cap A$.
		
		Let $n\in \N$ be the smallest integer such that $(n,x+1)\not\in A$.
		If $n=0$ then $A$ is an initial segment of $\mathcal{C}$.
		If $n=x-1$ then $A$ is a symmetrization of an initial segment of $\mathcal{L}$ (Figure \ref{triangleLexCase2}).
		\begin{figure}
			\centering
			\begin{subfigure}[t]{0.4\textwidth}
				\includegraphics[width=\textwidth]{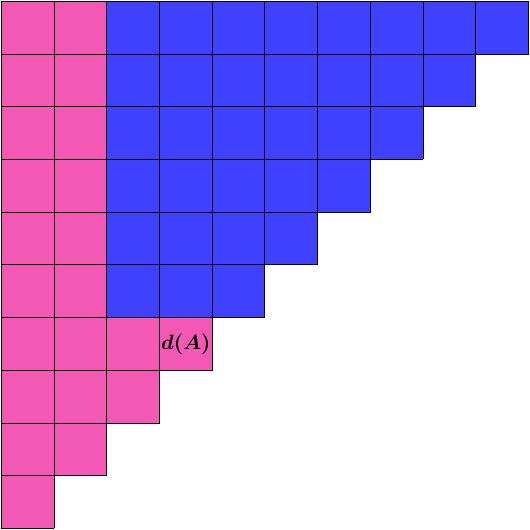}
				\caption{$A$.}
			\end{subfigure}
			\hfill
			\begin{subfigure}[b]{0.18\textwidth}
				\centering
				\raisebox{1.111111111\hsize}{{\includegraphics[width=\textwidth]{section3/2D_Pictures/Case_1_rectanglesOneMore_arrow_2.pdf}}}
			\end{subfigure}
			\hfill
			\begin{subfigure}[t]{0.4\textwidth}
				\includegraphics[width=\textwidth]{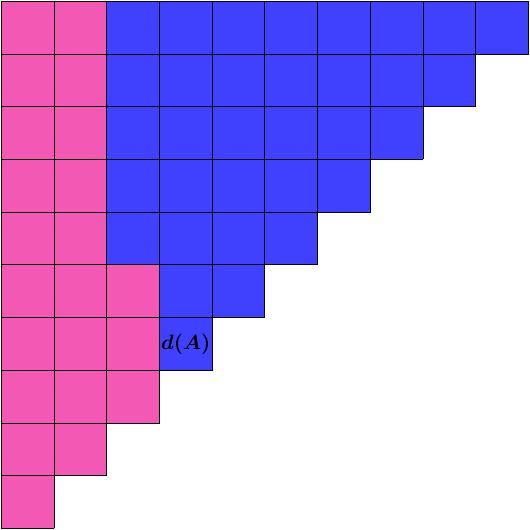}
				\caption{$\Sym_{\mathscr{M}}(\mathscr{Q}, A, 2,1)$.}
			\end{subfigure}
			
			\caption{Lemma \ref{triangleLexLemma} with $(x,x+1)\not\in A$ and $n=x-1$: $A$ is a symmetrization of an initial segment of $\mathcal{L}$ by a packed poset $\mathscr{Q}$.}
			\label{triangleLexCase2}
		\end{figure}
	
		Thus, we assume that $1\leq n \leq x-2$.
		Also, note that $\ell_2 \geq 3>1$ because $x \leq \ell-3$.
		However, now we must have that $\ell_2 \geq \ell_1 - 1$ by Corollary \ref{rectanglesOneMoreCoLex} and Corollary \ref{exactCoLex}.
		If $\ell_2=\ell_1-1$ and $n=1$ then $A$ is a symmetrization of an initial segment of $\mathcal{C}$ (Figure \ref{triangleLexCase3}).
		\begin{figure}
			\centering
			\begin{subfigure}[t]{0.4\textwidth}
				\includegraphics[width=\textwidth]{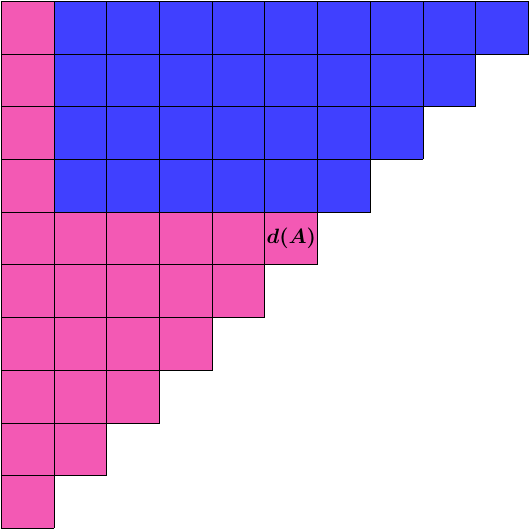}
				\caption{$A$.}
			\end{subfigure}
			\hfill
			\begin{subfigure}[b]{0.18\textwidth}
				\centering
				\raisebox{1.111111111\hsize}{{\includegraphics[width=\textwidth]{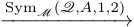}}}
			\end{subfigure}
			\hfill
			\begin{subfigure}[t]{0.4\textwidth}
				\includegraphics[width=\textwidth]{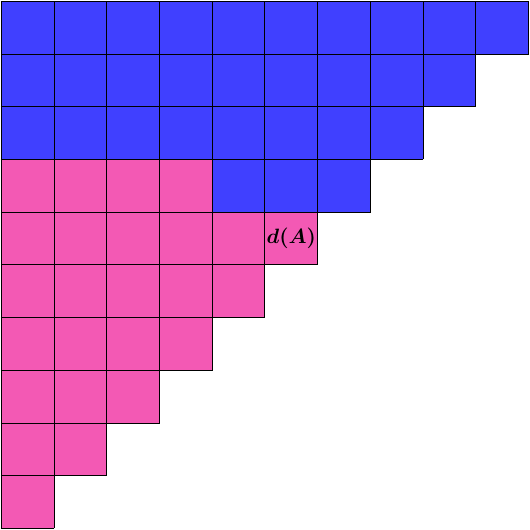}
				\caption{$\Sym_{\mathscr{M}}(\mathscr{Q}, A, 1,2)$.}
			\end{subfigure}

			\caption{Lemma \ref{triangleLexLemma} with $(x,x+1)\not\in A$, $\ell_2=\ell_1-1$ and $n=1$: $A$ is a symmetrization of an initial segment of $\mathcal{C}$ by a packed poset $\mathscr{Q}$.}
			\label{triangleLexCase3}
		\end{figure}
	
		It turns out that the above cases are the only ones possible.
		Assume to the contrary that $\ell_2>\ell_1-1$ or $1<n\leq x-2$.
		Then $A$ is not optimal because we can consider a symmetrization of $A$ by a packed poset $\mathscr{Q}$, 
		call this set $B = \Sym_{\mathscr{M}}(\mathscr{Q}, A, 2,1)$ (Figure \ref{triangleLexCase4}), 
		then $\mathscr{Q}_B\cap B$ is not optimal by Corollary \ref{fullRectangleProof}, whence $A$ is not optimal.
		\begin{figure}
			\centering
			\begin{subfigure}[t]{0.4\textwidth}
				\includegraphics[width=\textwidth]{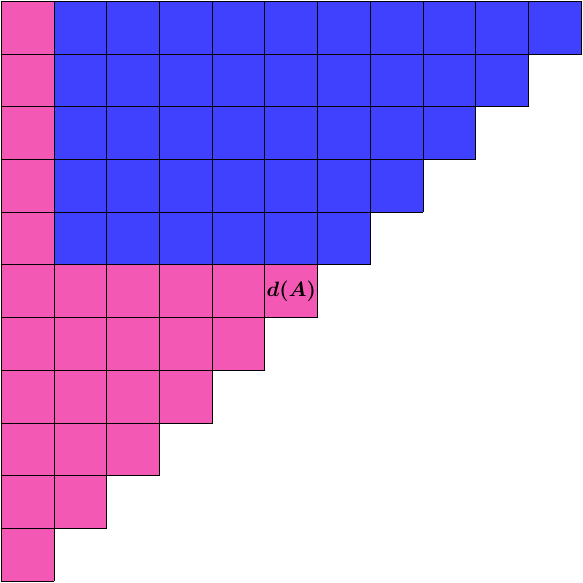}
				\caption{$A$.}
			\end{subfigure}
			\hfill
			\begin{subfigure}[b]{0.1\textwidth}
				\centering
				\raisebox{2\hsize}{\includegraphics[width=\textwidth]{section2/2D_Pictures/Case_1_2_rectangleProof_arrow.pdf}}
			\end{subfigure}
			\hfill
			\begin{subfigure}[t]{0.4\textwidth}
				\includegraphics[width=\textwidth]{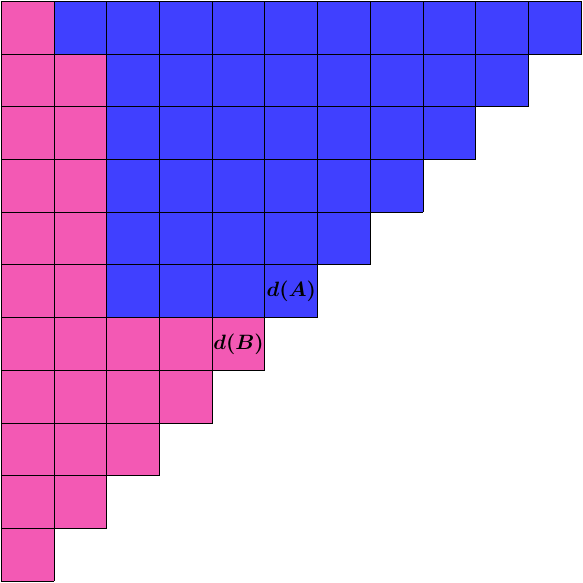}
				\caption{$B=\Sym_{\mathscr{M}}(\mathscr{Q}, A, 1,2)$.}
			\end{subfigure}

			\caption{Lemma \ref{triangleLexLemma} with $(x,x+1)\not\in A$, $\ell_2>\ell_1-1$ or $1<n\leq x-2$: 
				$A$ is not optimal because a symmetrization of it is not optimal by Corollary \ref{fullRectangleProof}.}\label{triangleLexCase4}
		\end{figure}
	\end{proof}

	\begin{lem}\label{triangleColexLemma}
		If $\mathscr{Q}_A\cap A$ is a colexicographic type set in $\mathscr{Q}_A$ then the claim holds.
	\end{lem}
	\begin{proof}
		If $|\mathscr{Q}_A\cap A| = \ell_1$ then $A$ is a an initial segment of $\mathcal{C}$.
		Similarly, if $|\mathscr{Q}_A\cap A| = \ell_1\ell_2$ then $A$ is an initial segment of $\mathcal{L}$.
		So, suppose that $\ell_1 < |\mathscr{Q}_A\cap A| < \ell_1\ell_2$ for the rest of this proof.
		
		First, suppose $\ell_1 < \ell_2-1$.
		Then $|\mathscr{Q}_A\cap A| \leq \ell_1$ or $|\mathscr{Q}_A\cap A|\geq \ell_1(\ell_2-1)$ by Corollary \ref{exactLex}.
		Thus, $|\mathscr{Q}_A\cap A|\geq \ell_1(\ell_2-1)$ and $A$ is a symmetrization of an initial segment of $\mathcal{L}$, since $\ell_1 < \ell_2-1$.
		Hence, we assume that $\ell_1 \geq \ell_2 - 1$ for the rest of this proof.
		Note that $\ell_2 \geq 3$ because $x \leq \ell -3$, whence $\ell_1\geq 2$.
		
		Next, suppose that $|\mathscr{Q}_A\cap A|\leq 2\ell_1$.
		First, we handle the edge case $\ell_1=2$.
		Then $2 \geq \ell_2 -1$, which gives $\ell_2\leq 3$.
		However, this gives $\ell_2=3$, since we have $\ell_2\geq 3$.
		But now Corollary \ref{rectanglesOneMore} implies that $|\mathscr{Q}_A\cap A| = 4$.
		This gives only two options for $\mathscr{Q}_A\cap A$ based on if $(x,x+1)\in A$ (Figure \ref{triangleCoLexCase1}),
		for one of them $A$ is an initial segment of $\mathcal{C}$ (Figure \ref{triangleCoLexCase1A}) when $(x,x+1)\in A$,
		and in the other $A$ is initial segment of $\mathcal{L}$ (Figure \ref{triangleCoLexCase1B}) when $(x,x+1)\not \in A$.
		\begin{figure}
			\centering
			\begin{subfigure}[t]{0.3\textwidth}
				\includegraphics[width=\textwidth]{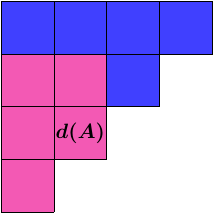}
				\caption{An initial segment of $\mathcal{C}$.}
				\label{triangleCoLexCase1A}
			\end{subfigure}
			\hspace{2cm}
			\begin{subfigure}[t]{0.3\textwidth}
				\includegraphics[width=\textwidth]{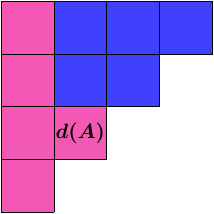}
				\caption{An initial segment of $\mathcal{L}$.}
				\label{triangleCoLexCase1B}
			\end{subfigure}
			
			\caption{Lemma \ref{triangleColexLemma} with $|\mathscr{Q}_A\cap A|\leq 2\ell_1$ and $\ell_1 = 2$.}
			\label{triangleCoLexCase1}
		\end{figure}
		So, we assume that $\ell_1 \geq 3$.
		Then $\mathscr{Q}_A\cap A$ must be of type \ref{type1Cor} or \ref{type2.1} in Corollary \ref{fullRectangleProof}, 
		notice that we can never have type \ref{type2.2} because $\ell_1 \geq \ell_2 - 1$, $\ell_1 \geq 3$ and $(x,x)\in \mathscr{Q}_A\cap A$.
		If $\mathscr{Q}_A\cap A$ is of type \ref{type1Cor} then $A$ is an initial segment of $\mathcal{C}$.
		If $\mathscr{Q}_A\cap A$ is of type \ref{type2.1} then $A$ is a symmetrization of an initial segment of $\mathcal{C}$.
		Hence, we assume that $|\mathscr{Q}_A\cap A|> 2\ell_1$ for the rest of this proof.
		
		Next, we handle the case $|\mathscr{Q}_A\cap A|= 3\ell_1$.
		Well, $\mathscr{Q}_A\cap A$ has to be one of the 3 types of colexicographic sets in Corollary \ref{fullRectangleProof}.
		It can't be of type \ref{type2.1} because $\ell_1\geq \ell_2-1$ and $|\mathscr{Q}_A\cap A|= 3\ell_1 > 2\ell_1$.
		If it is of type \ref{type1Cor} then $A$ is a symmetrization of an initial segment of $\mathcal{C}$ (Figure \ref{triangleCoLexCase2}).
		\begin{figure}
			\centering
			\begin{subfigure}[t]{0.4\textwidth}
				\includegraphics[width=\textwidth]{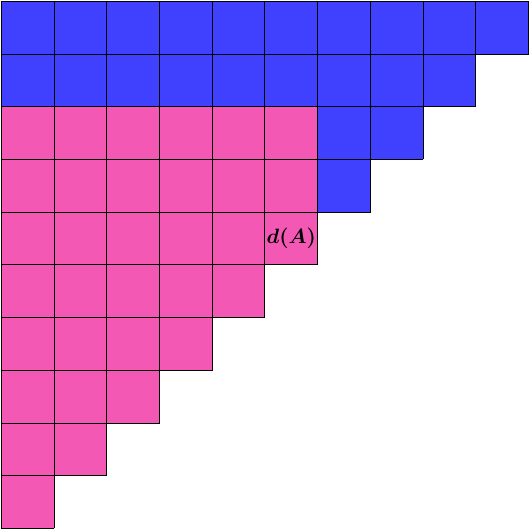}
				\caption{$A$.}
			\end{subfigure}
			\hfill
			\begin{subfigure}[b]{0.18\textwidth}
				\centering
				\raisebox{1.111111111\hsize}{{\includegraphics[width=\textwidth]{section4/2D_Pictures/triangleLexCase3_arrow.pdf}}}
			\end{subfigure}
			\hfill
			\begin{subfigure}[t]{0.4\textwidth}
				\includegraphics[width=\textwidth]{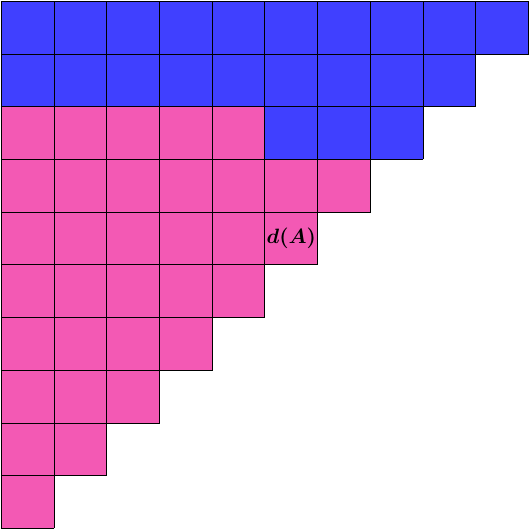}
				\caption{$\Sym_{\mathscr{M}}(\mathscr{Q}, A, 1,2)$.}
			\end{subfigure}

			\caption{Lemma \ref{triangleColexLemma} with $|\mathscr{Q}_A\cap A|= 3\ell_1$ and $\mathscr{Q}_A\cap A$ of type \ref{type1Cor}: $A$ is a symmetrization of an initial segment of $\mathcal{C}$ by a packed poset $\mathscr{Q}$.}
			\label{triangleCoLexCase2}
		\end{figure}
		So we assume that $\mathscr{Q}_A\cap A$ is of type \ref{type2.2}.
		Then $\ell_1 = 3$, which implies that $\ell_2 \leq \ell_1+1 = 4$.
		We can't have $\ell_2 = 3$ because $|\mathscr{Q}_A\cap A| < \ell_1\ell_2$, whence $\ell_2 = 4$.
		However, in this case $A$ is an initial segment of $\mathcal{L}$ (Figure \ref{triangleCoLexCase3}).
		\begin{figure}
			\centering
			\includegraphics[width=0.3\textwidth]{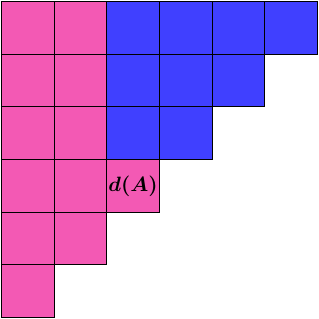}
			\caption{Lemma \ref{triangleColexLemma} with $|\mathscr{Q}_A\cap A|= 3\ell_1$ and $\mathscr{Q}_A\cap A$ of type \ref{type2.2}: $A$ is an initial segment of $\mathcal{L}$.}
			\label{triangleCoLexCase3}
		\end{figure}
		
		It turns out that the above cases are the only ones possible.
		Assume to the contrary that $|\mathscr{Q}_A\cap A|> 2\ell_1$ and $|\mathscr{Q}_A\cap A|\neq 3\ell_1$.
		We show that if $\mathscr{Q}_A\cap A$ is a colexicographic type set then it is not optimal, which will give us a contradiction.
		So, without loss of generality, we assume that $\mathscr{Q}_A\cap A$ is an initial segment of $\mathcal{C}$, as any symmetrization has the same weight.
		Let $y\in \N$ be the last integer such that $\mathscr{R}_{\{1\}}(y)\cap A \neq \emptyset$ and put $H=\mathscr{R}_{\{1\}}(y)\cap \mathscr{Q}_{A}$ (Figure \ref{triangleCoLexCase_H_and_V}).
		Also, we define $V = \{(a,b)\in \mathscr{R}(\ell) \bigm |  a = x+1 \text{ and } x+1\leq b < y\}$ (Figure \ref{triangleCoLexCase_H_and_V}).
		\begin{figure}
			\centering
			\begin{subfigure}[t]{0.4\textwidth}
				\includegraphics[width=\textwidth]{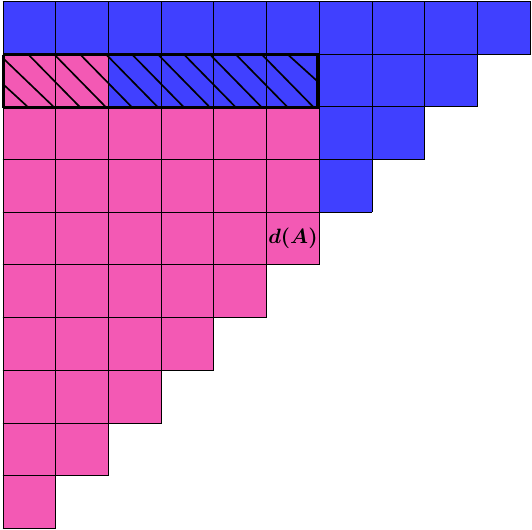}
				\caption{The set $H$ (shaded).}
			\end{subfigure}
			\hfill
			\begin{subfigure}[t]{0.4\textwidth}
				\includegraphics[width=\textwidth]{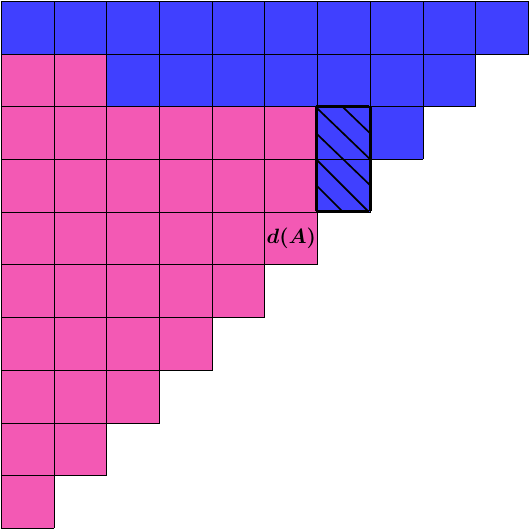}
				\caption{The set $V$ (shaded).}
			\end{subfigure}
			
			\caption{Lemma \ref{triangleColexLemma} with $|\mathscr{Q}_A\cap A|> 2\ell_1$ and $|\mathscr{Q}_A\cap A|\neq 3\ell_1$.}
			\label{triangleCoLexCase_H_and_V}
		\end{figure}
		Note that $|H| = \ell_1$ and $|V| = y-x-1\leq \ell_2 - 1 -1 \leq \ell_1 -1 < \ell_1 =|H|$.
		Define the packed poset (Figure \ref{triangleCoLexCase_Q})
		\begin{align*}
			\mathscr{Q} = \{(a,b) \in \mathscr{M}_{[2]}(\ell, \ell) \bigm | 0 \leq a \leq x+1 \text{ and } y-x-1\leq b\leq y\}.
		\end{align*}
		\begin{figure}
			\centering
			\includegraphics[width=0.5\textwidth]{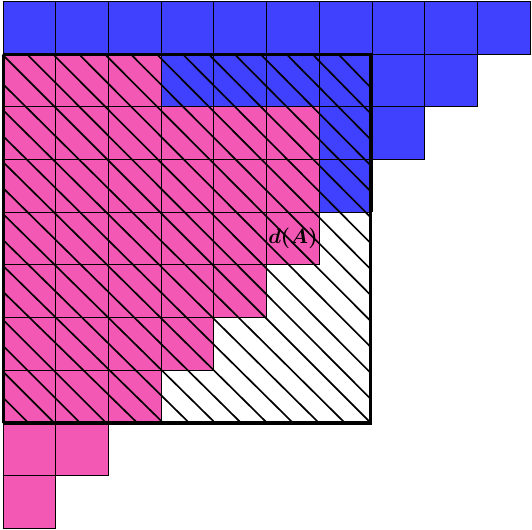}
			\caption{Lemma \ref{triangleColexLemma} with $|\mathscr{Q}_A\cap A|> 2\ell_1$ and $|\mathscr{Q}_A\cap A|\neq 3\ell_1$: 
				The packed poset $\mathscr{Q}$.}\label{triangleCoLexCase_Q}
		\end{figure}
		Then $H,V\subseteq \mathscr{Q}$, and every element of $V$ is a reflection ( in $\mathscr{Q}$) of and element from $H$, since $|V|< |H|$.
		Thus, there is a weight preserving injection from $V$ to $H$.
		First, we suppose that $|\mathscr{Q}_A\cap A|\neq k\ell_1$ for all $k\in \N$.
		Then $A$ is not optimal by a reflect-push method (Figure \ref{triangleCoLexCaseContra1}) that moves $n$ elements with $n = \min\{|H\cap A|, |V|\}$,
		such that the symmetrization is happening in $\mathscr{Q}$ with respect to $(1,2)$,
		and the pushing is upwards.
		\begin{figure}
			\centering
			\begin{subfigure}[t]{0.4\textwidth}
				\includegraphics[width=\textwidth]{section4/2D_Pictures/cube_diagram_10x10_lemma_2_Q.pdf}
				\caption{$A$.}
			\end{subfigure}
			\hfill
			\begin{subfigure}[b]{0.18\textwidth}
				\centering
				\raisebox{1.111111111\hsize}{{\includegraphics[width=\textwidth]{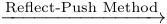}}}
			\end{subfigure}
			\hfill
			\begin{subfigure}[t]{0.4\textwidth}
				\includegraphics[width=\textwidth]{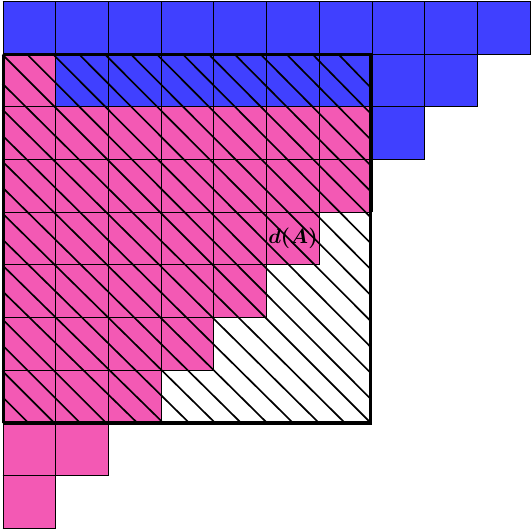}
				\caption{A set with larger weight than $A$.}
			\end{subfigure}
			\caption{Lemma \ref{triangleColexLemma} with $|\mathscr{Q}_A\cap A|> 2\ell_1$ and $|\mathscr{Q}_A\cap A|\neq k\ell_1$: $A$ is not optimal by a reflect-push method.}
			\label{triangleCoLexCaseContra1}
		\end{figure}
		So, $|\mathscr{Q}_A\cap A|= k\ell_1$ for some $k\in \N$ and $k\neq 3$.
		However, now we can preform a symmetrization that moves $|V|$ elements in $\mathscr{Q}$ with respect to $(1,2)$,
		and the resulting set $B$ is not optimal, since it satisfies the conditions from the previous argument.
		Therefore, we get a contradiction in both cases and the claim holds.
	\end{proof}

	Putting Lemma \ref{triangleLexLemma} and Lemma \ref{triangleColexLemma} together, we get that the theorem holds.
\end{proof}

If one follow the proof of Theorem \ref{triangleProof} then a much more specific statement comes out.
This is similar to Corollary \ref{fullRectangleProof} where the packed posets are explicitly stated.
This is an easy exercise and is left to the reader.
From Theorem \ref{triangleProof} we can conclude that the domination orders are best possible.
This is captured in the following corollary.

\begin{cor}\label{triangleCor}
	Suppose that $\mathscr{R} = \mathscr{R}_{[2]}(\ell)$ 
	is a right triangle with a rank increasing and rank constant weight function.
	If $A\subseteq \mathscr{R}$ is a downset then
	\begin{align*}
		\wt(A) \leq \max \{\wt(\mathscr{R}_{\mathcal{L}}^{-1}[|A|]), \wt(\mathscr{R}_{\mathcal{C}}^{-1}[|A|])\}
	\end{align*} 
\end{cor}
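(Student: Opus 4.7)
The plan is to invoke Theorem~\ref{triangleProof} directly and then verify that symmetrization through a packed poset preserves total weight, which reduces the bound to a comparison between the only two domination orders available in two dimensions.

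First I would pick an optimal downset $A^* \subseteq \mathscr{R}$ with $|A^*| = |A|$, so that $\wt(A) \leq \wt(A^*)$ by definition of optimality. Theorem~\ref{triangleProof} gives two cases: either $A^* = \mathscr{R}_{\pi}^{-1}[|A|]$ for some $\pi \in \mathfrak{S}_2$, or $A^* = \Sym_{\mathscr{R}}(\mathscr{Q},\mathscr{R}_{\pi}^{-1}[|A|],c_1,c_2)$ for some $\pi \in \mathfrak{S}_2$, some packed $\mathscr{Q} \subseteq \mathscr{R}$, and some $c_1,c_2 \in [2]$. Since $\mathfrak{S}_2 = \{\mathrm{id},(1\,2)\}$, the toset $\mathscr{R}_{\pi}$ is either $\mathscr{R}_{\mathcal{L}}$ or $\mathscr{R}_{\mathcal{C}}$. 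In the first case the bound is immediate; everything therefore reduces to showing that in the second case $\wt(A^*) = \wt(\mathscr{R}_{\pi}^{-1}[|A|])$.

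The key claim is that symmetrization about $(c_1,c_2)$ inside a packed poset preserves weight. By Definition~\ref{reflectionsAndSymmetrization} such a symmetrization requires $\ell_{c_1} = \ell_{c_2}$ inside $\mathscr{Q}$. Under the standard multiset identification $\mathscr{Q} \cong \mathscr{M}_{[2]}(k,k)$, the reflection $f \mapsto f_{\{c_1,c_2\}}$ swaps the two multiplicities, so if $\mathscr{Q}$ occupies the block $a \leq x_{c_1} \leq a+k-1$, $b \leq x_{c_2} \leq b+k-1$ in $\mathscr{R}$, then a point with coordinates $(p,q)$ is sent to $(a + (q-b),\, b + (p-a))$, whose coordinate sum equals $p+q$. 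Thus each reflection preserves rank, and because the weight function is rank constant it preserves weight pointwise; since the definition of $\Sym$ keeps the same number of elements per reflection orbit, the full symmetrization is both size and weight preserving. The chain $\wt(A) \leq \wt(A^*) = \wt(\mathscr{R}_{\pi}^{-1}[|A|]) \leq \max\{\wt(\mathscr{R}_{\mathcal{L}}^{-1}[|A|]),\wt(\mathscr{R}_{\mathcal{C}}^{-1}[|A|])\}$ then closes the argument. The heavy lifting was already carried out in Theorem~\ref{triangleProof}; the only obstacle remaining is this short rank-preservation check for reflections, which is immediate from the equal-length constraint built into the definition of symmetrization.
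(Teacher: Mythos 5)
Your proposal is correct and follows the paper's own route: the paper derives this corollary directly from Theorem~\ref{triangleProof}, and your rank-preservation check for reflections inside the packed poset is exactly the (implicit) reason the symmetrized case contributes the same weight as the corresponding initial segment. Nothing further is needed.
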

\begin{proof}
	Follows right away from Theorem \ref{triangleProof}.
\end{proof}

It turns out that we can prove Corollary \ref{triangleCor} without the machinery of Theorem \ref{triangleProof}.
In fact, we can state an even stronger statement.

\begin{cor}
	Suppose that $\mathscr{R} = \mathscr{R}_{[2]}(\ell)$ 
	is a right triangle with a rank increasing and rank constant weight function.
	Let $A\subseteq \mathscr{R}$ be a downset with $d(A)=(x,x)$.
	One has:
	\begin{enumerate}
		\item if $x\leq \ell/2$ then $\wt(A) \leq \wt(\mathscr{R}_{\mathcal{L}}^{-1}[|A|])$.
		\item if $x\geq \ell/2$ then $\wt(A) \leq \wt(\mathscr{R}_{\mathcal{C}}^{-1}[|A|])$.
	\end{enumerate}
	Therefore,
	\begin{align*}
		\wt(A) \leq \max \{\wt(\mathscr{R}_{\mathcal{L}}^{-1}[|A|]), \wt(\mathscr{R}_{\mathcal{C}}^{-1}[|A|])\}.
	\end{align*} 
\end{cor}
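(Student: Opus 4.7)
I will prove part 1 (part 2 follows by the analogous argument, swapping lex for colex and Corollaries \ref{exactCoLex} and \ref{rectanglesOneMore} for Corollaries \ref{exactLex} and \ref{rectanglesOneMoreCoLex}). Fix a downset $A\subseteq \mathscr{R}$ with $d(A)=(x,x)$ and $x\leq \ell/2$; set $m=|A|$ and $L=\mathscr{R}_{\mathcal{L}}^{-1}[m]$.

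The strategy proceeds in two steps. First, decompose $A=T_{x-1}\sqcup A'$, where $T_{x-1}=\{(a,b)\in \mathscr{R}:b\leq x-1\}\subseteq A$ (forced by $(x,x)\in A$) and $A'=A\cap \mathscr{Q}_A$ is a downset of the packed rectangle $\mathscr{Q}_A\iso \mathscr{M}_{[2]}(x+1,\ell-x)$. Since $x\leq \ell/2$ gives $\ell-x\geq x+1$ (or $\ell-x=x$ in the borderline case), $\mathscr{Q}_A$ is at least as tall as wide; combining Corollary \ref{exactLex} with Corollary \ref{rectanglesOneMoreCoLex} and Corollary \ref{squares} identifies the lex initial segment of size $|A'|$ in $\mathscr{Q}_A$ as an optimal downset. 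Replacing $A'$ by this lex segment produces $B:=T_{x-1}\sqcup \mathscr{Q}_{A,\mathcal{L}}^{-1}[|A'|]$ with $|B|=m$ and $\wt(A)\leq \wt(B)$.

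Second, I would compare $\wt(B)$ to $\wt(L)$ column by column. Write $|A'|=k(\ell-x)+r$ with $0\leq r<\ell-x$. For $c<k$, both $B$ and $L$ contain exactly the full column $\{(c,c),\dots,(c,\ell-1)\}$, contributing equal weight. The remaining $N:=\binom{x-k+1}{2}+r$ cells of $L$ sit inside column $k$ (possibly overflowing to column $k+1$) at the consecutive ranks $2k,2k+1,\dots,2k+N-1$. By contrast the remaining cells of $B$ form a staircase across columns $k,k+1,\dots,x-1$, where column $c$ contributes ranks $\{2c,\dots,c+x-1\}$ and column $k$ additionally contributes $r$ cells at ranks $k+x,\dots,k+x+r-1$. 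To conclude $\wt(B)\leq \wt(L)$, I would establish pointwise stochastic dominance of $L$'s rank multiset over $B$'s: for every threshold $t\geq 2k$, the number of $B$-residual cells of rank at most $t$ is at least $t-2k+1=|\{L\text{-residual cells of rank}\leq t\}|$. Because the weight function is strictly rank-increasing, this dominance forces $\wt(B)\leq \wt(L)$.

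The main obstacle is the stochastic-dominance verification, which is a careful count of $B$-cells by rank threshold across overlapping rank intervals. A clean way to carry it out is by induction on the staircase depth $x-k$: the base $x-k=0$ is immediate since $B$'s residual then lives entirely in column $k$ and exactly matches $L$; for the inductive step one separates the contribution of column $k$ from that of columns $c>k$, noting that column $k$ alone already provides the consecutive initial block of ranks starting at $2k$, and any later column $c>k$ contributes only ranks $\geq 2c>2k$, so the cumulative count for $B$ exceeds that for $L$ at every threshold, and the identity $\binom{x-k+1}{2}=\binom{x-k}{2}+(x-k)$ keeps the totals in balance. Combined with $\wt(A)\leq \wt(B)\leq \wt(L)$, this completes part 1. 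Part 2 is proved by the mirror argument with colex in place of lex: when $x\geq \ell/2$, the rectangle $\mathscr{Q}_A$ is at least as wide as tall, Corollaries \ref{exactCoLex} and \ref{rectanglesOneMore} upgrade $A'$ to its colex initial segment, and the row-indexed analogue of the stochastic dominance yields $\wt(B)\leq \wt(\mathscr{R}_{\mathcal{C}}^{-1}[m])$.
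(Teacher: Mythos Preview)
Your step~1 is exactly one iteration of the paper's argument: replace $A\cap\mathscr{Q}_A$ by the lex initial segment of the same size inside $\mathscr{Q}_A$, using the rectangle corollaries to see that weight does not drop when $x\le\ell/2$. The paper, however, does not attempt your step~2 at all. It simply \emph{iterates} step~1: after each replacement the diagonal point strictly decreases (or the set is already a lex initial segment of $\mathscr{R}$), so in finitely many rounds one lands on $\mathscr{R}_{\mathcal L}^{-1}[m]$ itself with a chain of inequalities $\wt(A)=\wt(A_0)\le\wt(A_1)\le\cdots\le\wt(\mathscr{R}_{\mathcal L}^{-1}[m])$. This is cheaper than any direct comparison.

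Your step~2 has a genuine gap. You assert that, after stripping the common full columns $0,\dots,k-1$, the $L$-residual occupies ``the consecutive ranks $2k,2k+1,\dots,2k+N-1$'' and hence $|\{L\text{-residual cells of rank}\le t\}|=t-2k+1$. That formula is only valid when the residual fits in column $k$, i.e.\ when $N\le\ell-k$. But $N=\binom{x-k+1}{2}+r$ can exceed $\ell-k$ whenever $x-k\ge 2$ (for suitable $r<\ell-x$); then $L$ spills into column $k+1$, whose ranks restart at $2k+2$, and the $L$-residual rank multiset is no longer an interval of consecutive integers---in particular your count $t-2k+1$ undercounts the $L$-cells of rank $\le t$, so proving that $B$ has at least $t-2k+1$ low-rank cells does not establish the desired dominance. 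Your induction sketch on $x-k$ inherits the same error. The stochastic dominance you are after is true (it is equivalent to the statement you are proving, since it must hold for \emph{every} rank-increasing weight), but your argument does not establish it. The clean repair is the paper's: iterate step~1 rather than compare $B$ to $L$ directly.

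A smaller issue: at the borderline $x=\ell/2$ (even $\ell$) one has $\ell_1=x+1>\ell-x=\ell_2$, so $\mathscr{Q}_A$ is strictly wider than tall and Corollary~\ref{rectanglesOneMoreCoLex} does \emph{not} give that the lex segment is optimal for every size; your parenthetical handling of this case is not quite right. (The paper's terse justification ``since $x\le\ell/2$'' glosses over the same point.)
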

\begin{proof}
	We prove the first claim; the proof for the second one is similar.
	So suppose that $x\leq \ell/2$ and let $A_0 = A$.
	Then suppose that $n\geq 1$ and that for all $n'<n$ the set $A_{n'}\subseteq \mathscr{R}$ is defined.
	Let $\mathscr{Q}=\mathscr{Q}_{A_{n-1}}$.
	We define
	\begin{align*}
		A_n = (A_{n-1}\setminus(\mathscr{Q}\cap A_{n-1}))\cup \mathscr{Q}_{\mathcal{L}}^{-1}[|\mathscr{Q}\cap A_{n-1}|].
	\end{align*}
	Thus, we consider the sequence $(A_n)_{n=0}^\infty$.
	For any $n\geq 1$ we have that $\wt(A_{n-1}) \leq \wt(A_{n})$, since $x\leq \ell/2$.
	The sequence is eventually constant because $A_{n-1} \neq A_{n}$ iff the diagonal point of $A_{n}$ has smaller entries than the entries of the diagonal point of $A_{n-1}$.
	It is easily seen that the constant at the tail of this sequence if an initial segment of $\mathcal{L}$.
	Hence, the first claim is proved.
	The second claim follows by a similar argument.
	The last claim follows by combining the first two claims.
\end{proof}

\section{Applications Of The Main Results}\label{appliations}
We apply the results obtained in the previous sections to several extremal problems.
For this we formally state the classical two edge-isoperimetric problems.
All graphs in this section are simple.

\begin{dfn}[Induced Edges]\label{inducedEdges}
	For a graph $G=(V_G,E_G)$ and $A\subseteq V_G$ we define $I_G(A)$ to be the \textit{set of edges induced by} $A$, that is,
	\begin{align*}
		I_G(A) = \{\{x,y\}\in E_G \bigm | x,y\in A\}.
	\end{align*}
	For $m\in \N$ we define $I_G(m)$ to be \textit{the maximum number of edges induced by a set} $A\subseteq V_G$ \textit{of size} $m$, that is,
	\begin{align*}
		I_G(m) = \max_{\substack{A\subseteq V_G\\|A|=m}} |I_G(A)|.
	\end{align*} 
\end{dfn}

\begin{prb}[Induced Edges Problem]\label{inducedEdgesProblem}
	Given a graph $G=(V_G,E_G)$, for each $m\in [|V_G|+1]_0$ find a set $A\subseteq V_G$ such that $|A|= m$ and $|I_G(A)| = I_G(|A|)$.
	We call such a set $A$ \textit{optimal for the induced edges problem}.
\end{prb}

\begin{dfn}[Boundary Edges]\label{boundaryEdges}
	For a graph $G=(V_G,E_G)$ and $A\subseteq V_G$ we define $\Theta_G(A)$ to be the \textit{set of boundary edges of} $A$, that is,
	\begin{align*}
		\Theta_G(A) = \{\{x,y\}\in E_G \bigm | x\in A \text{ and } y\not\in A\}.
	\end{align*}
	For $m\in \N$ we define $\Theta_G(m)$ to be \textit{the minimum number of boundary edges of a set} $A\subseteq V_G$ \textit{of size} $m$, that is,
	\begin{align*}
		\Theta_G(m) = \min_{\substack{A\subseteq V_G\\|A|=m}} |\Theta_G(A)|.
	\end{align*} 
\end{dfn}

\begin{prb}[Boundary Edges Problem]\label{boundaryEdgesProblem}
	Given a graph $G=(V_G,E_G)$, for each $m\in [|V_G|+1]_0$ find a set $A\subseteq V_G$ such that $|A|= m$ and $|\Theta_G(A)| = \Theta_G(|A|)$.
	We call such a set $A$ \textit{optimal for the boundary edges problem}.
\end{prb}

In the above definitions and problems we omit the index $G$ when the graph is clear from context.
Problems \ref{inducedEdgesProblem} and \ref{boundaryEdgesProblem} are equivalent when the graph is regular.
The following trivial result shows us this.
This lemma is a folklore result, but a proof of it can be found in \cite{HarperBook}.

\begin{lem}\label{edgeIsoEquiv}
	If $G=(V,E)$ is a regular graph with degree $k$ then for any $A\subseteq V$ we have
	\begin{align*}
		2|I(A)| + |\Theta(A)| = k|A|.
	\end{align*}
\end{lem}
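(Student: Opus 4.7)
The plan is to prove this by a standard double-counting argument on the quantity $S = \sum_{v\in A} \deg_G(v)$. Since $G$ is $k$-regular, one immediately has $S = k|A|$, which will give the right-hand side. The rest of the argument is to evaluate $S$ in a different way and recognize the expression $2|I(A)| + |\Theta(A)|$ on the other side.

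For the alternate evaluation, I will count the contribution to $S$ edge-by-edge instead of vertex-by-vertex. Each edge $e = \{x,y\} \in E$ contributes to $S$ once for every endpoint of $e$ that lies in $A$. Thus an edge with both endpoints in $A$ (i.e. an edge of $I(A)$) contributes $2$, an edge with exactly one endpoint in $A$ (i.e. an edge of $\Theta(A)$) contributes $1$, and an edge with no endpoint in $A$ contributes $0$. Summing over all edges yields
\begin{align*}
    S = 2|I(A)| + |\Theta(A)|.
\end{align*}
Combining this with $S = k|A|$ gives the claimed identity.

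There is essentially no obstacle here; the only thing to be careful about is that every edge of $G$ falls into exactly one of the three categories above (both endpoints in $A$, exactly one, or none), which is a trivial partition. The lemma is a folklore identity precisely because the proof is a one-line double count, so the write-up in the paper can simply be the two displays above followed by the equation of the two expressions for $S$.
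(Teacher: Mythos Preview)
Your proof is correct; it is the standard double-counting argument and there is nothing to fix. Note, however, that the paper does not actually give its own proof of this lemma: it states the result as folklore and simply cites \cite{HarperBook} for a proof, so there is no in-paper argument to compare against.
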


\subsection{The Ahlswede-Katona Problem}
The Ahlswede-Katona Problem is an extremal problem concerning the number of pairs of adjacent edges in a graph.

\begin{dfn}
	Let $n,m\in \N$ and by $\mathscr{G}(n,m)$ denote the set of all graphs on $n$ vertices and $m$ edges.
	For $G=(V,E)\in \mathscr{G}(n,m)$ we denote the \textit{set of pairs of adjacent edges in} $G$ by
	\begin{align*}
		P(G) = \left\lbrace \{e_1,e_2\} \in \binom{E}{2}\bigm | |e_1\cap e_2| = 1 \right\rbrace .
	\end{align*}
	For any $m\in \left[\binom{n}{2} +1 \right]_0$ we define
	\begin{align*}
		P_n(m) = \max_{\substack{G\in \mathscr{G}(n,m)}} |P(G)|.
	\end{align*}
\end{dfn}

\begin{prb}[The Ahlswede-Katona Problem]\label{AKProblem}
	For any $m\in \left[\binom{n}{2} +1 \right]_0$ find $G \in \mathscr{G}(n,m)$ such that $P(m) = |P(G)|$.
	We call such a graph $G$ \textit{optimal in} $\mathscr{G}(n,m)$.
\end{prb}

Problem \ref{AKProblem} is a special case of problems \ref{inducedEdgesProblem} and \ref{boundaryEdgesProblem}.
This is captured in the following lemma.

\begin{lem}
	Let $n,m\in \N$ and consider a graph $G=(V_G,E_G) \in \mathscr{G}(n,m)$.
	Let $J=(V_J, E_J)$ denote the Johnson graph $J(n,2)$.
	Then $P(G) = I_J(E_G)$.
	By Lemma \ref{edgeIsoEquiv}, $G$ is optimal in $\mathscr{G}(n,m)$ iff $E_G$ is optimal for $I_J$ iff $E_G$ is optimal for $\Theta_J$.
\end{lem}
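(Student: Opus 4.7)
The plan is to verify the set-level equality $P(G) = I_J(E_G)$ by a direct unpacking of definitions, and then invoke regularity of $J(n,2)$ together with Lemma \ref{edgeIsoEquiv} to deduce the equivalence of the two optimality statements.

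First I would unwind the definitions. The vertex set of $J = J(n,2)$ is $V_J = \binom{[n]}{2}$, so every edge of $G$ is, tautologically, an element of $V_J$; hence $E_G \subseteq V_J$. By the definition of $J(n,2)$, two vertices $e_1,e_2 \in V_J$ are adjacent in $J$ iff $|e_1 \cap e_2| = 2-1 = 1$. Comparing with the definition of $P(G)$, an unordered pair $\{e_1,e_2\}$ lies in $P(G)$ iff $e_1,e_2 \in E_G$ and $|e_1\cap e_2|=1$, which is precisely the condition that $\{e_1,e_2\}$ is an edge of $J$ with both endpoints in the vertex subset $E_G$. That is exactly the definition of $I_J(E_G)$, so $P(G) = I_J(E_G)$ as sets and hence $|P(G)| = |I_J(E_G)|$.

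For the second assertion, I need that $J(n,2)$ is regular so that Lemma \ref{edgeIsoEquiv} applies. The degree of any vertex $e=\{a,b\}\in V_J$ equals the number of $2$-subsets meeting $e$ in exactly one element: there are $2$ choices for which element of $e$ to keep and $n-2$ choices of replacement, giving degree $2(n-2)$, independent of $e$. Together with the observation that the map $G \mapsto E_G$ is a bijection between $\mathscr{G}(n,m)$ and the collection of $m$-element subsets of $V_J$, one gets $G$ optimal in $\mathscr{G}(n,m)$ iff $|I_J(E_G)| = I_J(m)$, and then Lemma \ref{edgeIsoEquiv} transfers this to optimality for $\Theta_J$.

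There is no real obstacle here: the lemma is essentially a translation statement, and the only point worth flagging is the bijective identification of graphs on $[n]$ with $m$ edges and $m$-subsets of $\binom{[n]}{2}$, which ensures the maxima in the definitions of $P_n(m)$ and $I_J(m)$ really do range over matching domains.
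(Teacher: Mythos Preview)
Your proposal is correct. The paper itself does not supply a proof of this lemma at all---it is stated as a translation statement and left to the reader---so your direct unpacking of the definitions together with the regularity computation for $J(n,2)$ is exactly the intended (and only reasonable) verification.
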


The only thing to do now is to reduce the edge-isoperimetric problem on $J(n,2)$ to a maximum weight downset problem on one of the posets that we studied in the previous sections.
The following lemma summarizes several strands of work. 
It is an easy exercise on shifting/stabilization and proofs of it can already been found in the literature \cite{ACHyper, HarperBook}.

\begin{lem}
	Consider the Johnson graph $J=(V_J,E_J)=J(n,2)$.
	For any $v\in V_J$ we can write $v=\{a_1,a_2\}$ with $a_1< a_2$,
	and hence we have a bijection $\sigma: V_J \rightarrow \mathscr{R}_{[2]}(n-1)$ defined by $\sigma(v) = (a_1-1, a_2-2)$.
	Giving $\mathscr{R}_{[2]}(n-1)$ with the standard weight function,
	for every $A\subseteq \mathscr{R}_{[2]}(n-1)$ there is a downset $B\subseteq \mathscr{R}_{[2]}(n-1)$ such that:
	\begin{enumerate}
		\item $|B| = |A|$.
		\item $|I_J(\sigma^{-1}(B))| \geq |I_J(\sigma^{-1}(A))|$.
		\item $|I_J(\sigma^{-1}(B))| = \wt(B)$.
	\end{enumerate}
	Thus, for any $n,m$ we can find a $G=(V,E)\in \mathscr{G}(n,m)$ with $E$ optimal for $I_J$, 
	and such that $\sigma(E)$ is an optimal downset in $\mathscr{R}_{[2]} (n-1)$ with the standard weight function.
\end{lem}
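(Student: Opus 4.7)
The plan is to establish the three conclusions in two stages: first, a \emph{closed form} on downsets identifying $|I_J(\sigma^{-1}(B))|$ with $\wt(B)$, and second, a \emph{compression argument} producing such a downset $B$ from any starting $A$ without decreasing the induced-edge count.

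For the closed form, the first step is to observe that $\sigma$ turns adjacency in $J(n,2)$ into comparability in $\mathscr{R}_{[2]}(n-1)$. Given $v=\{a_1,a_2\}$ with $a_1<a_2$ and $\sigma(v)=(b_1,b_2)=(a_1-1,a_2-2)$, the $2(n-2)$ neighbors of $v$ are obtained by replacing one of $a_1,a_2$ by some $a\notin\{a_1,a_2\}$, and a direct case split shows that in every case the image under $\sigma$ is coordinatewise comparable to $(b_1,b_2)$. Enumerating only the neighbors whose image is strictly below $(b_1,b_2)$ yields three groups of sizes $b_1$, $b_2-b_1$, $b_1$, summing to $b_1+b_2$, the rank of $\sigma(v)$. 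Since every edge of $J$ is thus an edge between comparable points of $\mathscr{R}$, if $B$ is a downset then any neighbor of $v\in\sigma^{-1}(B)$ lying strictly below $v$ is automatically in $\sigma^{-1}(B)$. Double counting edges by their upper endpoint gives
\begin{align*}
	|I_J(\sigma^{-1}(B))|=\sum_{v\in\sigma^{-1}(B)} \#\{v'\sim v : \sigma(v')<\sigma(v)\}=\sum_{(b_1,b_2)\in B}(b_1+b_2)=\wt(B),
\end{align*}
proving (3) for any downset $B$.

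For (1) and (2), I would use the classical $(i,j)$-shifting operation $S_{ij}$ on subsets of $V_J$ (for $1\leq i<j\leq n$): for $A\subseteq V_J$, replace each $v\in A$ containing $j$ but not $i$ by $(v\setminus\{j\})\cup\{i\}$ whenever the image is not already in $A$, and keep the other elements of $A$ unchanged. Standard (and routine) calculations show that $|S_{ij}(A)|=|A|$ and $|I_J(S_{ij}(A))|\geq |I_J(A)|$; references for this fact are given in the text. Iterating over all pairs $(i,j)$ with $i<j$ terminates, since each nontrivial shift strictly decreases the sum $\sum_{\{a_1,a_2\}\in A}(a_1+a_2)$, and produces a ``shifted'' family $A^{*}\subseteq V_J$ fixed by every $S_{ij}$.

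To conclude, one checks that $\sigma(A^{*})$ is a downset in $\mathscr{R}_{[2]}(n-1)$. The two covering relations in $\mathscr{R}_{[2]}(n-1)$, namely $(b_1,b_2)\mapsto(b_1-1,b_2)$ and $(b_1,b_2)\mapsto(b_1,b_2-1)$, correspond under $\sigma^{-1}$ to single shifts of the form $S_{b_1,b_1+1}$ and $S_{b_2+1,b_2+2}$ respectively. Since $A^{*}$ is fixed under all these shifts, $B:=\sigma(A^{*})$ is closed under covering relations, hence a downset; by construction $|B|=|A|$ and $|I_J(\sigma^{-1}(B))|\geq|I_J(\sigma^{-1}(A))|$, and part~(3) already proved yields $|I_J(\sigma^{-1}(B))|=\wt(B)$. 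The final assertion, producing an optimal $E$ for $|V|=n$, $|E|=m$ whose image under $\sigma$ is an optimal downset, then follows by taking any optimal $A$ and applying the reduction.

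The only nonroutine piece is the edge-comparability computation of the first paragraph; everything else is standard shifting/compression. The main obstacle in presentation is not a difficult step but keeping the case analysis for the three groups of neighbors streamlined; once adjacency-implies-comparability is in hand, the rank identity and the compression argument both fall out cleanly.
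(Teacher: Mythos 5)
Your proof is correct, and it is exactly the shifting/stabilization argument that the paper itself invokes without writing out (the paper gives no proof of this lemma, only the remark that it ``is an easy exercise on shifting/stabilization'' with pointers to the literature). Both halves check out: the adjacency-implies-comparability case split does give $b_1+(b_2-b_1)+b_1=b_1+b_2$ lower neighbors, which yields $|I_J(\sigma^{-1}(B))|=\wt(B)$ for downsets by counting edges at their upper endpoints, and the compression via the $S_{ij}$ terminates at a family whose image is closed under the covering relations of $\mathscr{R}_{[2]}(n-1)$ and hence is a downset.
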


So, we know that Problem \ref{AKProblem} is just a special case of Problem \ref{maximumWeightIdealProblem} on $\mathscr{R}_{[2]}(\ell)$,
and in particular the results in section \ref{triangles} apply to Problem \ref{AKProblem}.
The result of Ahlswede and Katona \cite{Ahlswede1978GraphsWM} is the corresponding special case of Corollary \ref{triangleCor}.
The results in section \ref{triangles} are much stronger and describe the behavior of all optimal downsets.
We should mention that Theorem 3 in \cite{Ahlswede1978GraphsWM} compared initial segments of $\mathcal{L}$ and $\mathcal{C}$ in more detail.
The proof of this theorem is somewhat complicated.
In the near future, the authors of the current paper hope to apply the reflect-push method to strengthen these results and provide a combinatorial and geometric proof. 

The result of Ahlswede and Katona was recently generalized in another direction by Keough and Radcliffe in \cite{KeoughLauren2016Gwtf}.
First, notice that maximizing the number of pairs of adjacent edges is equivalent to minimizing the number of pairs of disjoint edges.
Then one can generalize the idea of a disjoint pair of edges to a matching (a set of pairwise disjoint edges).
The Keough-Radcliffe Theorem states that among all graph in $\mathscr{G}(n,m)$, 
the lexicographic or colexicographic graph minimizes the number of matchings, 
and also minimizes the number of $k$-matchings.
It would be interesting to generalize this theorem with the ideas of Theorem \ref{triangleProof}.
We give some ideas along these lines in the final section of the paper. 

\subsection{Generalized Edge-Isoperimetric Functions}

We now turn our attention to Lindsay's theorem and generalized edge-isoperimetric functions.
\begin{dfn}[Cartesian Graph Products]\label{graphProducts}
	For graphs $G$ and $H$ their \textit{Cartesian product} is
	a graph $G\square H$ defined as follows:
	\begin{align*}
		V_{G\square H} &= V_G \times V_H\\
		E_{G\square H} &= \{ ((v_G,v_H), (u_G,u_H) ) \bigm |
		v_G=u_G \text{ and } (v_H,u_H)\in E_H, \text{ or } 
		v_H=u_H \text{ and } (v_G,u_G)\in E_G\}.
	\end{align*}
	Let $G^d=G\square \cdots \square G$ ($d$ times). 
	Note that $G^0$ is a simple graph with one vertex.
\end{dfn}

For any $n\in \N$ let $K_n=(V,E)$ be the complete graph on vertex set $V=[n]_0$.
With this setup, the vertex set of an arbitrary product of complete graphs is a multiset lattice.
Thus, we will talk about initial segments of domination orders.
We are now ready to state Lindsay's Theorem.

\begin{thm}[Lindsay \cite{LINDSEYJH1964AoNt}]\label{cliqueProduct}
	If $n_1\leq n_1\leq \cdots \leq n_d$ then every initial segment of $\mathcal{L}$ is optimal in $G=K_{n_1} \square K_{n_2}\square\cdots \square K_{n_d}$ for $I_G$ and $\Theta_G$.
\end{thm}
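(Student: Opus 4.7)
The plan is to translate the edge-isoperimetric problem on $G$ into the maximum weight downset problem on a multiset lattice, then combine the two-dimensional results of Section~\ref{rectangles} with the Ahlswede-Cai local-global principle referenced in Theorem~\ref{lgbAC}. First I would identify $V_G$ with $\mathscr{M} = \mathscr{M}_{[d]}(n_1,\dots,n_d)$, under which two vertices of $G$ are adjacent iff they differ in exactly one coordinate. Since $G$ is regular, Lemma~\ref{edgeIsoEquiv} reduces the two edge-isoperimetric problems to one, so it suffices to solve the induced edges problem. A standard coordinate-shifting argument then produces, from any $A\subseteq V_G$, a downset $B\subseteq \mathscr{M}$ with $|B|=|A|$ and $|I_G(B)|\geq |I_G(A)|$: for each coordinate $i$ and each $x\in \mathscr{M}_{\overline{\{i\}}}$, replace $A\cap \mathscr{M}_{\{i\}}(x)$ by the initial segment of the same size in that chain, and iterate across all coordinates.

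For a downset $B\subseteq \mathscr{M}$, every $v\in B$ contains all of its coordinate-wise predecessors, so the edges of $G$ from $v$ to smaller neighbors all stay in $B$ and number exactly $\sum_i v_i$, the rank of $v$. Counting each edge once from its upper endpoint gives $|I_G(B)| = \wt(B)$ under the standard weight function, which is rank-increasing and rank-constant. The theorem reduces to showing that the initial segment of $\mathcal{L}$ of size $m$ is a maximum weight downset in $\mathscr{M}$ for every $m$. For $d=2$, the hypothesis $n_1\leq n_2$ puts us in case 1, 2, or 3 of Corollary~\ref{nestedSolutionsRectangles}, each of which asserts that $\mathcal{L}$ yields nested optimal solutions.

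For $d\geq 3$ I would lift via the Ahlswede-Cai local-global principle: if $\mathcal{L}$ is optimal on every two-dimensional coordinate slice then $\mathcal{L}$ is optimal globally. Each slice $\mathscr{M}_{\{i,j\}}(x)$ with $i<j$ and $x\in \mathscr{M}_{\overline{\{i,j\}}}$ is isomorphic to $\mathscr{M}_{[2]}(n_i,n_j)$, and the standard weight restricted to it is ``rank plus a constant,'' where the constant equals the contribution of the fixed coordinates. This shifted weight is still rank-increasing and rank-constant, and $n_i\leq n_j$ since $i<j$, so Corollary~\ref{nestedSolutionsRectangles} applies and $\mathcal{L}$ is nested-optimal on the slice. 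Applying the Ahlswede-Cai principle then finishes the proof.

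The main obstacle is a small but crucial bookkeeping point: one must verify that the weight functions induced on two-dimensional slices remain in the class covered by the results of Section~\ref{rectangles}. Here the constant shift preserves both rank-constancy and strict rank-monotonicity, so this is immediate but needs checking. A secondary routine point is that the compression step from $A$ to a downset $B$ does not decrease $|I_G|$, which is the standard fiber-by-fiber calculation. Once these are in place, the conclusion in fact extends to any rank-increasing and rank-constant weight on $\mathscr{M}$, giving a strengthening of Lindsay's theorem in the same spirit as the main rectangle results.
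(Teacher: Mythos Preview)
Your approach is essentially the paper's own: reduce to the maximum weight downset problem with the standard weight, invoke Corollary~\ref{nestedSolutionsRectangles} for the two-dimensional case, and then lift via a local-global principle. The paper says exactly this in the paragraph following Theorem~\ref{HarperLgp}.

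There is one genuine slip in the citation. You invoke Theorem~\ref{lgbAC} (the Ahlswede--Cai principle), but as stated in the paper that result concerns a single push-down function $\varphi:2^{[n]_0}\to\R$ and its powers $\varphi^d$ on $\mathscr{M}_{[d]}(n,\dots,n)$, i.e.\ all factors have the \emph{same} size $n$. Lindsay's theorem allows $n_1\le n_2\le\cdots\le n_d$ with distinct $n_i$, so Theorem~\ref{lgbAC} does not apply directly. The correct black box is Theorem~\ref{HarperLgp} (Harper's version of the local-global principle), whose hypothesis is precisely that each pairwise product $G_i\square G_j$ has lexicographic nested solutions; this is what your two-dimensional slice argument verifies via Corollary~\ref{nestedSolutionsRectangles} applied to $\mathscr{M}_{[2]}(n_i,n_j)$ with $n_i\le n_j$. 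Once you swap in the right reference, your analysis of the induced slice weight (rank plus a constant, still rank-increasing and rank-constant) is exactly what is needed, and the proof goes through as the paper indicates.
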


It turns out that optimal sets in the product of complete graphs are found among downsets.
Furthermore, the edge-isoperimetric problem reduces to a maximum weight downset problem.
This is a general property of graphs with nested solutions and we state it as such.

\begin{dfn}[Nested Solutions and $\delta$-sequences]\label{nestedSolutonsInduced}
	Let $G=(V,E)$ be a graph.
	We say that $G$ has \textit{nested solutions} if there is a sequence of subsets $A_0\subseteq A_1\subseteq \cdots \subseteq A_{|V|}$,
	such that for each $i$ we have that $|A_i|=i$ and $A_i$ is optimal under $I_G$.
	
	Equivalently, if a graph $G$ has nested solutions then its vertex set can be totally ordered by some order $\mathcal{O}$,
	such that each initial segment of $\mathcal{O}$ is an optimal set under $I_G$.
	We call such an order $\mathcal{O}$ an \textit{optimal order}.
	Hence, vertex set of a Cartesian product of graphs is then just a product of totally ordered sets.
	Thus, the vertex set of a Cartesian product of graphs is isomorphic to a multiset lattice by Proposition \ref{decompositionOfMultisetLatices}.
	
	Then we define the $\delta$\textit{-sequence} of $G$ (see \cite{AHLSWEDE1997355,AHLSWEDE1997479, BezrukovSergei2018Nifo, BezrukovS.L.2000AEPf, BezrukovSergeiL.1999Oaei, BezrukovSergeiL.2007Opoh, BezrukovSergeiL.2003Epfc, BezrukovSergeiL.2004Anat, BonnetEdouard2016AnoE}) such that for any $i\in [|V|]$ we have
	\begin{align*}
		\delta_G(i) = |I_G(\mathcal{O}^{-1}[i])| - |I_G(\mathcal{O}^{-1}[i-1])|.
	\end{align*}
	Note that any optimal order gives the same $\delta$-sequence.
\end{dfn}

For example, $K_n$ has nested solutions and 
\begin{align*}
	\delta_{K_n} = (0,1,2,3,4,\dots, n-1).
\end{align*}
Of course, one can define such nested solutions and $\delta$-sequences for $\Theta_G$ in a similar way to $I_G$.

The nested solutions make the vertex set into a totally ordered set.
The ideas of the following lemma are presented by Harper in \cite{HarperBook}. Harper credits Bezrukov with this insight.

\begin{lem}
	Suppose that we have graphs $G_i = (V_i,E_i)$ for $i\in [d]$, that have nested solutions.
	Let $G_1\square \cdots \square G_d = (V,E)$ and $\sigma:V \rightarrow \mathscr{M}=\mathscr{M}_{[d]}(|V_1|,\dots, |V_d|)$ be the decomposition isomorphism in Proposition \ref{decompositionOfMultisetLatices}.
	For any $A\subseteq \mathscr{M}$ there is a downset $B\subseteq \mathscr{M}$ such that:
	\begin{enumerate}
		\item $|B|=|A|$.
		\item $ |I_G(\sigma^{-1}(B))|\geq |I_G(\sigma^{-1}(A))|$.
		\item $|I_G(\sigma^{-1}(B))| =  \sum_{(x_1,\dots, x_d)\in B} \sum_{i=1}^d \delta_{G_{i}}(x_i)$.
	\end{enumerate}
\end{lem}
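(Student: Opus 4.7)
The plan is to use a standard fiber-wise compression. For each $i \in [d]$ and each $z \in \prod_{j\neq i} V_j$, write $F_i(z)$ for the $i$-fiber $\{(y_1,\dots,y_d) \in V : y_j = z_j \text{ for all } j\neq i\}$, which is isomorphic as an induced subgraph of $G$ to $G_i$. Fixing an optimal order $\mathcal{O}_i$ on $V_i$ for each $i$ (which exists because $G_i$ has nested solutions), define the compression $C_i(A)$ to be the set obtained from $A$ by replacing $A \cap F_i(z)$, for every $z$, by the $\mathcal{O}_i$-initial segment of $F_i(z)$ of the same size. Transporting via $\sigma$, the $C_i$ act on subsets of $\mathscr{M}$, and plainly $|C_i(A)| = |A|$.

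The key step is to show $|I_G(\sigma^{-1}(C_i(A)))| \geq |I_G(\sigma^{-1}(A))|$. The edges of $G$ split into intra-fiber edges (inside some $F_i(z)$) and inter-fiber edges (between $F_i(z)$ and $F_i(z')$ where $z,z'$ differ only in a coordinate $j \neq i$ with $z_j$ and $z'_j$ adjacent in $G_j$). Intra-fiber edges are handled directly by applying the nested solutions hypothesis on $G_i$ to each fiber. For inter-fiber edges, observe that the canonical bijection $\phi_{z,z'}\colon F_i(z)\to F_i(z')$ preserving the $i$-coordinate identifies each edge with a pair $\{u,\phi_{z,z'}(u)\}$; thus the cross-edge count inside $A$ equals $|A\cap F_i(z)\cap \phi_{z,z'}^{-1}(A\cap F_i(z'))|$, which is at most $\min(|A\cap F_i(z)|,|A\cap F_i(z')|)$. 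After compression both factors become $\mathcal{O}_i$-initial segments and are therefore nested, so the count attains exactly this minimum. Hence no edges are lost.

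Now iterate $C_1, C_2, \ldots, C_d, C_1, \ldots$. The integer potential $\Phi(A) = \sum_{x\in A}\sum_i x_i$ (using $\mathscr{M}$-coordinates) is bounded below and strictly decreases under any nontrivial application of some $C_i$, so the process terminates at a set $B$ invariant under every $C_i$. If $x\in B$ and $y\leq x$ in $\mathscr{M}$, then $y$ is reached from $x$ by a chain of single-coordinate decreases, each of which stays in $B$ because $B \cap F_i(z)$ is an $\mathcal{O}_i$-initial segment; hence $B$ is a downset. For the formula, assign each edge of $G[\sigma^{-1}(B)]$ to its later endpoint in the relevant $\mathcal{O}_i$: for fixed $x\in B$ and direction $i$, the $\mathcal{O}_i$-predecessors of $x_i$ that are $G_i$-neighbors of $x_i$ all correspond, via the downset property, to vertices in $\sigma^{-1}(B)$, and their number is exactly what $\delta_{G_i}$ records at $x_i$; summing over $x$ and $i$ yields the desired expression. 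The main obstacle is the inter-fiber estimate, whose content is precisely that initial segments of one fixed total order on $V_i$ are automatically nested.
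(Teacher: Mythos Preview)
Your proof is correct and follows the standard fiber-wise compression argument; the paper does not supply its own proof of this lemma but instead attributes the ideas to Harper's book (crediting Bezrukov), and your argument is precisely that classical approach. One minor remark: in the formula in item~(3) there is an implicit off-by-one between the $\mathscr{M}$-coordinate $x_i\in[|V_i|]_0$ and the indexing of $\delta_{G_i}$ on $[|V_i|]$, but this is a notational ambiguity already present in the paper and your sentence ``their number is exactly what $\delta_{G_i}$ records at $x_i$'' captures the intended content.
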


Given this lemma, we can see that solving the edge-isoperimetric problem on the product of complete graphs,
is equivalent to solving the maximum weight downset problem on the corresponding multiset lattice,
where we are working with the standard weight function.
Thus, all the results from sections \ref{rectangles} and \ref{rectanglesExact} apply to the product of two complete graphs.
We get results about all optimal downsets, the uniqueness of the nested solutions, and the comparison between the lexicographic and colexicographic orders.

Ahlswede and Cai in \cite{AHLSWEDE1997479} proved that $G\square \cdots \square G$ has lexicographical nested solutions if $G \square G$ has lexicographical nested solutions, when $|V_G|\geq 3$.
To be more precise they proved such a result for generalized edge-isoperimetric functions.
Harper in \cite{HarperBook} proves this theorem for an arbitrary product of graphs, and calls this result the Ahlswede-Cai local-global principle.

\begin{thm}[Harper \cite{HarperBook}]\label{HarperLgp}
	Suppose that we have graphs $G_i = (V_i,E_i)$ for $i\in [d]$, that have nested solutions.
	If for all $i<j$ the product $G_i\square G_j$ has lexicographic nested solutions then $G_1\square \cdots \square G_d$ has lexicographic nested solutions. 
\end{thm}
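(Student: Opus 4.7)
The plan is to prove the theorem by iteratively applying two-dimensional compressions to drive an arbitrary optimal downset into a lex-initial segment. By the lemma preceding the theorem, the induced edges problem on $G := G_1 \square \cdots \square G_d$ reduces to the maximum weight downset problem on the multiset lattice $\mathscr{M} := \mathscr{M}_{[d]}(|V_1|,\ldots,|V_d|)$ with weight $\wt(y_1,\ldots,y_d) = \sum_{k=1}^d \delta_{G_k}(y_k)$. It therefore suffices to exhibit, for every $m$, a sequence of operations that transforms any downset of size $m$ into $\mathscr{M}_{\mathcal{L}}^{-1}[m]$ without decreasing its weight.

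For each pair $\{i,j\} \subseteq [d]$ I would introduce the compression operator $C_{ij}$: for every $x \in \mathscr{M}_{[d]\setminus\{i,j\}}$, replace $A \cap \mathscr{M}_{\{i,j\}}(x)$ by the lex-initial segment of the slab $\mathscr{M}_{\{i,j\}}(x)$ of the same cardinality. Two verifications are needed. Weight monotonicity, $\wt(C_{ij}(A)) \geq \wt(A)$: on any single slab, the weight differs from $\delta_{G_i}(y_i) + \delta_{G_j}(y_j)$ only by an additive constant determined by the fixed coordinates, so up to that constant it is exactly the weight driving the two-dimensional isoperimetric problem on $G_i \square G_j$, which by hypothesis is optimized by lex-initial segments. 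Downset preservation: the fact that $A$ is a downset forces $|A \cap \mathscr{M}_{\{i,j\}}(x)|$ to be weakly decreasing as $x$ grows in $\mathscr{M}_{[d]\setminus\{i,j\}}$, and the lex-initial segments of the $\{i,j\}$-slab of varying sizes are totally nested under inclusion, so the slab-by-slab replacement yields a global downset.

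Iterate the $C_{ij}$ in a round-robin fashion, tracking the potential $\Phi(A) = \sum_{f \in A} \mathscr{M}_{\mathcal{L}}(f)$ (the sum of ambient lex positions). Every nontrivial application of some $C_{ij}$ strictly decreases $\Phi$: inside each slab the ambient lex restricts to the slab's own lex, and the unique subset of a toset of a given cardinality minimizing the sum of positions is its initial segment. Since $\Phi$ takes non-negative integer values, after finitely many steps the process halts at a fully compressed downset $A^{\ast}$ with $|A^{\ast}| = |A|$ and $\wt(A^{\ast}) \geq \wt(A)$.

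The main obstacle is the final step: showing that any $\{C_{ij}\}$-fixed downset $A^{\ast}$ coincides with $\mathscr{M}_{\mathcal{L}}^{-1}[|A^{\ast}|]$. I would argue by induction on $d$; the base case $d = 2$ is immediate. For $d \geq 3$, fix a value $a$ of the first coordinate and set $L_a := \{(y_2,\ldots,y_d) : (a, y_2, \ldots, y_d) \in A^{\ast}\}$. The compressions $C_{ij}$ with $i, j \in \{2, \ldots, d\}$ witness that each $L_a$ is itself fully 2D-compressed in $\mathscr{M}_{\{2,\ldots,d\}}(|V_2|,\ldots,|V_d|)$, so by the inductive hypothesis (applied to $G_2 \square \cdots \square G_d$, which inherits every needed two-dimensional assumption) each $L_a$ is a $(d{-}1)$-dimensional lex-initial segment. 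The downset condition gives $|L_{a+1}| \leq |L_a|$, and the compressions $C_{1k}$ for $k \geq 2$ then force strong compatibility between successive layers, pinning the sequence to the threshold shape $L_a = \mathscr{M}_{\{2,\ldots,d\}}(|V_2|,\ldots,|V_d|)$ for $a < a^{\ast}$, $L_{a^{\ast}}$ a proper lex-initial segment, and $L_a = \emptyset$ for $a > a^{\ast}$. This is exactly $\mathscr{M}_{\mathcal{L}}^{-1}[|A^{\ast}|]$, and the delicate layer-compatibility argument driven by the $C_{1k}$ is where the bulk of the detailed case-work lives.
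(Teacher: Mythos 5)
Your overall plan is the standard one: the paper does not prove Theorem \ref{HarperLgp} itself (it quotes it from Harper's book), but it does say in Section \ref{appliations} that the local--global principles are proved by iterating pushing-down compressions until the set is compressed, which is exactly your $C_{ij}$ scheme. Your verifications of weight monotonicity (the slab weight is the two-dimensional weight $\delta_{G_i}(y_i)+\delta_{G_j}(y_j)$ plus a constant multiple of the cardinality, so the two-dimensional hypothesis applies slab by slab), of downset preservation, and of termination via the potential $\Phi$ are all correct.

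The gap is the final step: it is not true that a downset fixed by every $C_{ij}$ must equal $\mathscr{M}_{\mathcal{L}}^{-1}[m]$, and in particular the $C_{1k}$ do not pin the layers to the threshold shape you describe. Take $G_1=G_2=G_3=K_2$, so $\mathscr{M}=\mathscr{M}_{[3]}(2,2,2)$ with $\wt(y)=y_1+y_2+y_3$, and let
\begin{align*}
A=\{(0,0,0),(0,0,1),(0,1,0),(1,0,0)\}.
\end{align*}
This is a downset, and every two-dimensional slab meets $A$ in either $\{(0,0)\}$ or $\{(0,0),(0,1),(1,0)\}$, both of which are lex-initial segments of their slab; hence $A$ is fixed by $C_{12}$, $C_{13}$ and $C_{23}$. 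Yet its layers over the first coordinate are $L_0=\{(0,0),(0,1),(1,0)\}$ and $L_1=\{(0,0)\}$ --- two proper nonempty lex-initial segments rather than a threshold profile --- and $\wt(A)=3<4=\wt(\mathscr{M}_{\mathcal{L}}^{-1}[4])$. So your process can stabilize at a compressed downset that is neither lex-initial nor optimal, and the induction never gets off the ground at $d=3$. This is the classical difficulty in every compression proof of an edge-isoperimetric inequality: after compression one must separately classify the compressed-but-not-initial downsets and defeat them by an additional exchange step, which is where Ahlswede--Cai invoke submodularity and where the bulk of Harper's argument actually lives. Your proposal defers exactly this part to ``detailed case-work,'' but the structural claim that the case-work is supposed to establish is false, so the missing ingredient is not routine bookkeeping --- it is the heart of the theorem.
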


Combining Harper's version of the local-global principle with our results on rectangles gives us Lindsay's Theorem.
We can actually prove even more along these lines by considering the original local-global principle by Ahlswede and Cai.
For this we first need to define generalized edge-isoperimetric functions.

\begin{dfn}[Generalized Edge-Isoperimetric Functions]
	Let $\varphi:2^{[n]_0} \rightarrow \R$ be a function.
	We are now going to define the \textit{$d$-th power of} $\varphi$, and denote it by $\varphi^d$.
	The range of this new function is $\R$ and the domain of it is $2^{[n]_0\times \cdots \times [n]_0}$, where we have a $d$-fold Cartesian product.
	Notice that $[n]_0\times \cdots \times [n]_0$ is the multiset lattice $\mathscr{M}=\mathscr{M}_{[d]}(n,\dots, n)$.
	For any $A\subseteq \mathscr{M}_{[d]}(n,\dots, n)$ we define $\varphi^d(A)$ to be the sum of $\varphi$ applied to all $1$-dimensional subproducts intersected with $A$.
	To be precise, let
	\begin{align*}
		\varphi^d(A) = \sum_{i=1}^d \sum_{x\in \mathscr{M}_{[d]\setminus \{i\}}} \varphi(\mathscr{M}_{\{i\}} (x)\cap A).
	\end{align*}
	The function $\phi^d$ is called a \textit{generalized edge-isoperimetric function}.
\end{dfn}

The most important examples of generalized edge-isoperimetric functions are $I_G^d$ and $\Theta_G^d$. 

\begin{dfn}[Push-Down Functions]
	Let $\varphi:2^{[n]_0} \rightarrow \R$ be a function.
	We say that $\varphi$ is a \textit{push-down} function if it satisfies the following properties:
	\begin{enumerate}
		\item (nestedness/nested solutions) for all $k\in [n]_0$ and all $A\subseteq [n]_0$ we have
		\begin{align*}
			\varphi(A) \leq \varphi([k]).
		\end{align*}
		\item (submodularity) for all $A,B\subseteq [n]_0$ we have
		\begin{align*}
			\varphi(A) + \varphi(B) \leq \varphi(A\cup B) + \varphi(A\cap B).
		\end{align*}
		\item $\varphi(\emptyset) = 0$.
	\end{enumerate}
\end{dfn}

First, note that the third property of push-down functions is just for convenience, 
since we can always define $\varphi'$ such that $\varphi'(A) = \varphi(A) - \varphi(\emptyset)$.
Second, notice that $I_G$ and $-\Theta_G$ satisfy the second and third properties for push-down functions for any graph $G$.
If the graph $G$ has nested solutions then they satisfy the first property as well.

\begin{prb}[Maximizing Push-Down Functions]\label{submodularProblem}
	Given a push-down function $\varphi:2^{[n]_0} \rightarrow \R$, $d\geq 1$, and $m\in [n+1]_0$, find a set $A\subseteq [n]_0$ such that $|A|=m$ and  
	\begin{align*}
		\varphi^d(A) = \max_{\substack{S\subseteq [n]_0\\ |S|=m}} \varphi^d(S).
	\end{align*}
	We call such a set $A$ \textit{optimal for} $\varphi^d$.
\end{prb}

\begin{dfn}[$\delta$-sequences]
	For any push-down function $\varphi: 2^{[n]_0} \rightarrow \R$ we define its $\delta$-sequence such that for any $i\in [n]$
	\begin{align*}
		\delta_{\varphi}(i) = \varphi([i]_0) - \varphi([i-1]_0).
	\end{align*}
\end{dfn}

\begin{lem}[Ahlswede-Cai \cite{AHLSWEDE1997355}]
	Suppose that $\varphi: 2^{[n]_0} \rightarrow \R$ is a push-down function and $d\in \N$ with $d\geq 1$.
	If $A\subseteq \mathscr{M}=\mathscr{M}_{[d]}(n,\dots, n)$ then:
	\begin{enumerate}
		\item there exists a downset $B\subseteq \mathscr{M}$ such that $\varphi^d(A) \leq \varphi^d(B)$.
		\item $\varphi^d(B) = \sum_{(x_1,\dots, x_d)\in B} \sum_{i=1}^d \delta_{\varphi}(x_i)$.
	\end{enumerate}
\end{lem}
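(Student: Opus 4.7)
My plan is to dispatch the two claims by rather different means: the second (the formula on downsets) is a bookkeeping identity, while the first (the reduction to downsets) is an iterated compression argument driven by nestedness and submodularity. For the formula, I would first observe that if $B$ is a downset then every axis-parallel line $\mathscr{M}_{\{i\}}(x)\cap B$ is an initial segment $[k_{i,x}]_0$. Telescoping gives $\varphi([k_{i,x}]_0)=\sum_{j=1}^{k_{i,x}}\delta_{\varphi}(j)$, and regrouping the triple sum $\sum_{i}\sum_{x}\sum_{j=1}^{k_{i,x}}\delta_{\varphi}(j)$ as a sum over the points $p=(p_1,\ldots,p_d)\in B$ (each $p$ contributes one term in each direction $i$) yields the claimed formula, modulo the obvious shift between $[n]_0$ and $[n]$.

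For the first claim, define the compression operator $C_i$ on $2^{\mathscr{M}}$ which, for every $x\in\mathscr{M}_{[d]\setminus\{i\}}$, replaces $\mathscr{M}_{\{i\}}(x)\cap A$ by the initial segment of the same size. Then $C_i$ preserves $|A|$, and the potential $\Phi(A)=\sum_{p\in A}(p_1+\cdots+p_d)$ is a non-negative integer that strictly decreases whenever $C_i(A)\neq A$. Cycling through $C_1,\ldots,C_d$ therefore terminates in a set $B$ fixed by every $C_i$; such a $B$ has every axis-parallel line as an initial segment and so is a downset. Provided each application of $C_i$ satisfies $\varphi^d(C_i(A))\geq\varphi^d(A)$, the resulting $B$ will satisfy $\varphi^d(B)\geq\varphi^d(A)$.

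The main lemma to prove is therefore that $C_i$ does not decrease $\varphi^d$. Splitting $\varphi^d$ by direction, the direction-$i$ contribution $\sum_x\varphi(\mathscr{M}_{\{i\}}(x)\cap A)$ does not decrease because nestedness replaces each $\varphi(S)$ by $\varphi([|S|]_0)\geq\varphi(S)$. The obstacle is the direction-$j$ contribution for $j\neq i$. To handle it, I would fix all coordinates outside $\{i,j\}$, reducing to a two-dimensional slice on which $C_i$ acts as row compression and the direction-$j$ contribution is $\sum_x\varphi(C_x)$, where $C_x$ is the column at position $x$ in the slice.

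The technical heart is a decomposition of row compression into column-pair moves. For $0\leq x<x'\leq n-1$ define $S_{x,x'}$ by: in every row $R_y$ with $x'\in R_y$ and $x\notin R_y$, replace $x'$ by $x$. A direct check shows that $S_{x,x'}$ fixes all columns except $C_x$ and $C_{x'}$, which transform as $C_x\mapsto C_x\cup C_{x'}$ and $C_{x'}\mapsto C_x\cap C_{x'}$, so by submodularity
\[
\varphi(C_x\cup C_{x'})+\varphi(C_x\cap C_{x'})\geq\varphi(C_x)+\varphi(C_{x'}),
\]
and the slice's direction-$j$ contribution does not decrease. Applying the moves $S_{0,1},S_{0,2},\ldots,S_{0,n-1},S_{1,2},\ldots,S_{n-2,n-1}$ in this order drives each row to its initial segment (after the first block, row $y$ contains $0$ whenever $R_y\neq\emptyset$; after the second, it also contains $1$ whenever $|R_y|\geq 2$; and so on), so the composed operation is exactly $C_i$ on the slice. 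Chaining the submodular inequalities gives the bound, and the main lemma follows.
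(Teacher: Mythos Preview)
The paper does not supply its own proof of this lemma; it is stated with attribution to Ahlswede and Cai \cite{AHLSWEDE1997355} and used as a black box. So there is no in-paper argument to compare against. That said, your proposal is correct and is precisely the standard compression proof that underlies the cited result: reduce to downsets by iterated one-dimensional compressions $C_i$, use nestedness for the direction-$i$ summand, and use submodularity for the transverse directions by factoring $C_i$ (restricted to a two-dimensional slice) into the elementary pair moves $S_{x,x'}$, each of which sends the pair of columns $(C_x,C_{x'})$ to $(C_x\cup C_{x'},\,C_x\cap C_{x'})$. Your termination argument via the potential $\Phi(A)=\sum_{p\in A}\sum_i p_i$ and your telescoping derivation of the formula in part~(2) are both fine; your parenthetical about the index shift is appropriate, since as stated in the paper the formula $\sum_{p\in B}\sum_i\delta_\varphi(p_i)$ involves $\delta_\varphi(0)$, which is not defined, and the intended expression is $\sum_{p\in B}\sum_i\delta_\varphi(p_i+1)$. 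One small remark: the lemma as written omits the condition $|B|=|A|$, but your construction gives it automatically (each $C_i$ preserves cardinality), and the surrounding text clearly intends it.
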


Thus, Problem \ref{submodularProblem} is a special case of the maximum weight downset problem.
In particular, our results for rectangles apply to any push-down function $\varphi:2^{[n]_0}\rightarrow \R$ and its second power $\varphi^2$,
whenever $\delta_\varphi = (0, c, 2c,3c,\dots, (n-1)c)$ for some $c\in \R$.

\begin{thm}[Ahlswede-Cai \cite{AHLSWEDE1997479}]\label{lgbAC}
	Suppose that $\varphi: 2^{[n]_0} \rightarrow \R$ is a push-down function with $n\geq 3$.
	If all initial segments of $\mathcal{L}$ are optimal under $\varphi^2$ then all initial segments of $\mathcal{L}$ are optimal under $\varphi^d$ for any $d\geq 1$. 
\end{thm}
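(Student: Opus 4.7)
The plan is to proceed by induction on $d$, with base cases $d=1$ trivial (by nestedness of $\varphi$, the initial segment $[m]_0$ is a maximizer of $\varphi$ on subsets of size $m$) and $d=2$ supplied by hypothesis. For the inductive step with $d\ge 3$, I would first reduce to downsets via the Ahlswede-Cai reduction lemma stated just before the theorem: any $A\subseteq \mathscr{M}=\mathscr{M}_{[d]}(n,\dots,n)$ can be replaced by a downset $B$ with $|B|=|A|$ and $\varphi^d(B)\ge \varphi^d(A)$, and on downsets $\varphi^d(B)=\sum_{x\in B}\sum_{i=1}^d \delta_\varphi(x_i)$ is a separable weight. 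Since lex extends the coordinatewise order on multisets, $\mathcal{L}^{-1}[m]$ is itself a downset, so it suffices to compare $\varphi^d$ on downsets of size $m$.

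Next I would introduce a family of 2D lex-compressions. For each unordered pair $\{i,j\}\subseteq [d]$ and each $y\in \mathscr{M}_{[d]\setminus\{i,j\}}$, the slice $A\cap \mathscr{M}_{\{i,j\}}(y)$ is a downset in a $2$-dimensional multiset lattice; define $C_{ij}(A)$ by replacing each such slice with the lex initial segment of the same size inside $\mathscr{M}_{\{i,j\}}(y)$. Since lex initial segments of the same 2D subproduct are nested by size and the downset $A$ has nested slice sizes across increasing $y$, the result $C_{ij}(A)$ is again a downset in $\mathscr{M}$. On the separable weight $\sum_{i}\delta_\varphi(x_i)$ the contribution to $\varphi^d$ from the directions outside $\{i,j\}$ depends only on the slice sizes (which are preserved), while the contribution from the directions $\{i,j\}$ equals $\varphi^2$ of the slice and is by the $d=2$ hypothesis not decreased under replacement by the lex initial segment. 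Iterating the $C_{ij}$ over all pairs must terminate, since $\varphi^d$ is bounded and there are finitely many downsets of each size, at a downset $A^*$ all of whose 2D slices are lex initial segments.

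The key remaining step is to show that such a fully 2D-lex-stable downset must in fact be a lex initial segment of the global order $\mathcal{L}$. I would argue by contradiction: if $A^*$ is not a lex initial segment then there exist $x\in A^*$ and $y\notin A^*$ with $y<_{\mathcal{L}}x$. Let $i$ be the smallest coordinate where $x$ and $y$ differ, so $y_i<x_i$. Since $A^*$ is a downset and $y\notin A^*$, $y$ is not coordinatewise below $x$, so there exists some $k>i$ with $y_k>x_k$. I would then pass to an auxiliary point $z$ obtained from $y$ by altering coordinates outside $\{i,k\}$ to align with $x$ (one builds $z$ step by step through a chain of intermediate points lying in successively chosen 2D slices, so each step is controlled by the 2D-lex structure of some slice). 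The projection of $x$ and of the terminal point into a common $(i,k)$-slice then puts the terminal point in the lex-predecessor position of $x$ within the slice, forcing it into $A^*$ by the slice being a lex initial segment; traveling back along the chain, each 2D slice forces the preceding point into $A^*$, eventually giving $y\in A^*$, a contradiction. The hypothesis $n\ge 3$ is used precisely here: when $n=2$ the 2D lex and colex orders on $\mathscr{M}_{[2]}(2,2)$ are interchangeable by symmetrization, so the 2D slice structure cannot uniquely dictate global lex behavior.

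The main obstacle is the last step: although each 2D slice individually being lex is a strong local condition, pasting these together into global lex requires carefully choosing the sequence of intermediate points and auxiliary coordinates to chain together the slice-level forcings, because the global lex comparison between $x$ and $y$ can depend on coordinates lying outside any single 2D slice. This is where the technical content of the Ahlswede-Cai argument resides, and where the assumption $n\ge 3$ is indispensable.
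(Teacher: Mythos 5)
Your overall architecture --- reduce to downsets via the Ahlswede--Cai lemma, then iterate two-dimensional lex-compressions $C_{ij}$ on slices, using the $d=2$ hypothesis to show each compression does not decrease $\varphi^d$ --- is the natural starting point, and it matches the strategy the paper itself sketches in Section \ref{appliations} (the paper does not prove Theorem \ref{lgbAC}; it is quoted from Ahlswede--Cai). However, your key remaining step is false as stated: a downset all of whose two-dimensional slices are lex initial segments need \emph{not} be a global lex initial segment, even for $n\geq 3$. Concretely, in $\mathscr{M}_{[3]}(3,3,3)$ take
\begin{align*}
A=\{(0,0,0),(0,0,1),(0,0,2),(0,1,0),(0,1,1),(0,1,2),(0,2,0),(1,0,0)\}.
\end{align*}
This is a downset of size $8$. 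Its $\{2,3\}$-slices (at $x_1=0,1,2$) have sizes $7,1,0$ and each is the lex initial segment of that size in the $3\times 3$ grid; its $\{1,3\}$-slices have sizes $4,3,1$ and are lex initial segments; its $\{1,2\}$-slices have sizes $4,2,2$ and are lex initial segments. So $A$ is a fixed point of every $C_{ij}$. Yet $A$ is not the initial segment of $\mathcal{L}$ of size $8$ (which is the first eight points of the plane $x_1=0$), and for the standard weight it has weight $12$ versus $14$ for the lex segment. In particular the iteration can terminate at a stable downset that is neither lex nor optimal, and no chaining of slice-level forcings can deduce $y\in A$ from $x\in A$ with $y<_{\mathcal{L}}x$: here $x=(1,0,0)\in A$ while $y=(0,2,1)\notin A$.

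This is exactly where the substance of the Ahlswede--Cai argument lies: one must analyze the compression-stable downsets that are not lex initial segments and show by a separate exchange argument that each of them is still no better than (and can be strictly improved toward) the lex initial segment; stability under the $C_{ij}$ alone does not suffice, so the theorem cannot be closed the way you propose. A smaller secondary issue is termination: each $C_{ij}$ only weakly increases $\varphi^d$, so boundedness and finiteness do not rule out cycling among equal-weight downsets; one needs a strictly decreasing potential (for instance the sum of the $\mathcal{L}$-positions of the elements) to guarantee the iteration reaches a stable set.
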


Thus, we get the following corollary by combining Theorem \ref{lgbAC} and Corollary \ref{nestedSolutionsRectangles}.
\begin{cor}\label{lastCor}
	Suppose that $\varphi: 2^{[n]_0} \rightarrow \R$ is a push-down function with $n\geq 3$, and $\delta_\varphi = (0,c,2c,\dots, (n-1)c)$ for some $c\in \R$.
	Then all initial segments of $\mathcal{L}$ are optimal under $\varphi^d$ for any $d\geq 1$.
\end{cor}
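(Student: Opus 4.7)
The plan is to chain Theorem \ref{lgbAC} with the two-dimensional optimality result of Corollary \ref{nestedSolutionsRectangles}, using the Ahlswede--Cai reduction lemma stated just before Theorem \ref{lgbAC} to translate between the push-down formulation and the maximum weight downset formulation. First, by Theorem \ref{lgbAC} (whose hypotheses are met since $n\geq 3$), it suffices to prove the statement for $d=2$: once every initial segment of $\mathcal{L}$ on $\mathscr{M}_{[2]}(n,n)$ is shown to be optimal under $\varphi^2$, the corresponding statement for every $d\geq 1$ follows automatically.

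Next, I would invoke the Ahlswede--Cai reduction lemma: for any $A\subseteq \mathscr{M}_{[2]}(n,n)$ there exists a downset $B$ with $|B|=|A|$ and $\varphi^2(A)\leq \varphi^2(B)$, together with the identity $\varphi^2(B)=\sum_{(x_1,x_2)\in B}(\delta_\varphi(x_1)+\delta_\varphi(x_2))$. This reduces the $d=2$ instance of Problem \ref{submodularProblem} to the maximum weight downset problem on $\mathscr{M}_{[2]}(n,n)$ under the weight function
\begin{align*}
	\wt((x_1,x_2)) = \delta_\varphi(x_1)+\delta_\varphi(x_2) = c(x_1+x_2),
\end{align*}
which, up to the scalar $c$, coincides with the rank function of $\mathscr{M}_{[2]}(n,n)$. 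In particular $\wt$ is rank constant (Definition \ref{rankConstant}) and, when $c>0$, rank increasing (Definition \ref{rankIncreasing}).

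In the principal case $c>0$ I would apply Corollary \ref{nestedSolutionsRectangles}, part 1, with $\ell_1=\ell_2=n$, to conclude that $\mathcal{L}$ (and $\mathcal{C}$) provide the only nested family of optimal downsets in $\mathscr{M}_{[2]}(n,n)$ under this weight; in particular every initial segment of $\mathcal{L}$ is an optimal downset, hence optimal under $\varphi^2$. The degenerate case $c=0$ is immediate because then $\wt\equiv 0$ and every downset has weight zero, so initial segments of $\mathcal{L}$ are trivially optimal. Finally, Theorem \ref{lgbAC} lifts the $d=2$ conclusion to arbitrary $d\geq 1$, completing the proof.

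There is no real obstacle here: all the heavy lifting has already been done, first in Section \ref{rectangles} (to identify optimal downsets in rectangles), then in Section \ref{rectanglesExact} (to show that $\mathcal{L}$ is the \emph{unique} nested order when $\ell_1=\ell_2\geq 2$), and finally in Theorem \ref{lgbAC} (to lift from dimension two to arbitrary dimension). The only technical care needed is checking that the weight function induced by the Ahlswede--Cai reduction is exactly (a scalar multiple of) the standard rank on $\mathscr{M}_{[2]}(n,n)$, so that the hypotheses of Corollary \ref{nestedSolutionsRectangles} genuinely apply; this is a direct consequence of the assumption $\delta_\varphi=(0,c,2c,\ldots,(n-1)c)$.
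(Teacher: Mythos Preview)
Your proposal is correct and follows exactly the paper's route: the paper's proof is the single line that the result follows by combining Theorem \ref{lgbAC} with Corollary \ref{nestedSolutionsRectangles}, and you have simply unpacked that combination via the Ahlswede--Cai reduction lemma. (One harmless slip in your closing paragraph: when $\ell_1=\ell_2$, Corollary \ref{nestedSolutionsRectangles} part~1 gives both $\mathcal{L}$ and $\mathcal{C}$ as nested optimal orders, not $\mathcal{L}$ uniquely --- but you only need that $\mathcal{L}$ is among them, so the argument is unaffected.)
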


Corollary \ref{nestedSolutionsRectangles} can tell us something even stronger.
Discrete isoperimetric problems typically involve maximizing (respectively minimizing) some submodular (respectively supermodular, the inequality flips) function $\varphi:2^{[n]_0}\rightarrow \R$.
If $\varphi$ has nested solutions then a lot of the time we can reduce to maximizing over downsets.
Then to solve the problem for $\varphi^d$, pushing-down compression is used with respect to some order $\mathcal{O}$.
For an order $\mathcal{O}$ on $\mathscr{M} = \mathscr{M}_{[d]}(n,\dots, n)$, 
set of coordinates $S\subseteq [d]$, 
and a set $A\subseteq \mathscr{M}$ we define the pushing-down compression of $A$ with respect to $S$ by
\begin{align*}
	C_S(A) = \bigcup_{x\in \mathscr{M}_{[d]\setminus S}} \mathscr{M}^{-1}_S(x)[|\mathscr{M}_S(x)\cap A|].
\end{align*}
Theorems \ref{HarperLgp} and \ref{lgbAC} are proved by applying pushing-down operation with respect to $\mathcal{O} = \mathcal{L}$ 
until $C_S(A)=A$ (such a  set is called compressed) for any $S\subseteq [d]$.
The typical property that occurs is $\varphi^d(A) \leq \varphi^d(C_S(A))$.
The key component to using pushing-down compression is the order $\mathcal{O}$.
In particular, we can inductively solve the problem for $\varphi^d$ if we know that all initial segments of $\mathcal{O}$ are optimal for $\varphi^2$.
This is where Corollary \ref{nestedSolutionsRectangles} gives us uniqueness.
If one was to apply such a general technique of pushing-down compression when $\delta_{\varphi}=(0,c,2c,\dots, (n-1)c)$, 
then we have at most two options for $\mathcal{O}$, namely $\mathcal{O} = \mathcal{L}$ and $\mathcal{O}=\mathcal{C}$.

We should mention that Corollary \ref{lastCor} has been discovered before as a special case of Clements-Lindström Theorem \cite{ClementsG.F.1969Agoa}.
Both Engel \cite{engel_1997} and Harper \cite{HarperBook} cover it in the chapter (Chapter 8 for both) on Macaulay posets.
Harper covers it as a special case of the vertex isoperimetric problem.
Engel covers it from the shadow minimization perspective and provides a detailed history of relation between shadow minimization problems and maximum weight ideal problems.
The uniqueness of the orders $\mathcal{L}$ and $\mathcal{C}$ has not been discussed before.

\section{Final Remarks and Future Directions}\label{remarks}

As mentioned in section \ref{appliations}, 
generalizing the Keough-Radcliffe matching theorem along the lines of Theorem \ref{triangleProof} would be interesting to see.
Do symmetrization and reflect-push method increase the number of matching?
One can ask an even more general question.
Let $J(n,2)=(V,E)$.
What properties does a function $\varphi: 2^V \rightarrow \R$ need to satisfy such that for any set $A\subseteq V$,
symmetrizations of $A$ and reflect-push methods on $A$,
that produce a new set $B$,
satisfy $\varphi(A)\leq \varphi(B)$ (or $\varphi(A) \geq (B)$)?

It would be exciting to see a solution to the edge-isoperimetric problem in $I(n,3,2)$.
This question was first asked 45 years ago by Ahlswede and Katona \cite{Ahlswede1978GraphsWM}.
As mentioned in the introduction there is a lot of interest in this area.
However, this problem is extremely difficult as there have been many attempts over such a long period that only give partial results.
Notice that rectangles and right triangles are subposets of the vertex set on $I(n,3,2)$.
So, one can get partial results about the edge-isoperimetric problem from our two dimensional results.
We hope to study more types of subposets of the vertex set on $I(n,3,2)$, as subposet ideas were very useful in proving Theorem \ref{triangleProof}.

The first type of poset is right trapezoids.
These are generalization of right triangles,
the posets that keep all the increasing or decreasing sequences of $\mathscr{M}_{[2]}(\ell_1,\ell_2)$.
Another interesting case to look as is posets of the form $\mathscr{R}_{[2]}(\ell_1) \times \mathscr{M}_{[1]}(\ell_2)$.
We call these types of posets prisms.
Solving the maximum weight ideal problem on these posets could give ideas on how to approach the edge-isoperimetric problem in $I(n,3,2)$.

We predict that a very similar results hold for right trapezoids, as the ones for right triangles.
The case for the prisms is much more interesting and difficult.
If one follows the diagonal ($(1,2,3), (2,3,4), (3,4,5),\dots $) of $I(n,3,2)$ and stops,
then the poset that includes the all the vertices with higher $z$-coordinate can be partitioned into two prisms.
If all optimal downsets in prisms are found then similar techniques to the proof of Theorem \ref{triangleProof} could work to handle the edge-isoperimetric problem on $I(n,3,2)$.

\bibliographystyle{acm}
\bibliography{./reflect_push}

\begin{thebibliography}{10}

\bibitem{AHLSWEDE199575}
{\scshape Ahlswede, R., and Bezrukov, S.}
\newblock Edge isoperimetric theorems for integer point arrays.
\newblock {\em Applied Mathematics Letters 8}, 2 (1995), 75--80.

\bibitem{AHLSWEDE1997355}
{\scshape Ahlswede, R., and Cai, N.}
\newblock General edge-isoperimetric inequalities, part i:
  Information-theoretical methods.
\newblock {\em European Journal of Combinatorics 18}, 4 (1997), 355--372.

\bibitem{AHLSWEDE1997479}
{\scshape Ahlswede, R., and Cai, N.}
\newblock General edge-isoperimetric inequalities, part ii: a local–global
  principle for lexicographical solutions.
\newblock {\em European Journal of Combinatorics 18}, 5 (1997), 479--489.

\bibitem{ACCounter}
{\scshape Ahlswede, R., and Cai, N.}
\newblock A counterexample to {K}leitman's conjecture concerning an
  edge-isoperimetric problem.
\newblock {\em Combinatorics, Probability and Computing 8}, 4 (1999),
  301–305.

\bibitem{ACHyper}
{\scshape Ahlswede, R., and Cai, N.}
\newblock Appendix: On edge–isoperimetric theorems for uniform hypergraphs.
\newblock In {\em General Theory of Information Transfer and Combinatorics},
  vol.~4123 of {\em Lecture Notes in Computer Science}. Springer Berlin
  Heidelberg, Berlin, Heidelberg, 2006, pp.~979--1005.

\bibitem{Ahlswede1978GraphsWM}
{\scshape Ahlswede, R., and Katona, G. O.~H.}
\newblock Graphs with maximal number of adjacent pairs of edges.
\newblock {\em Acta Mathematica Academiae Scientiarum Hungarica 32\/} (1978),
  97--120.

\bibitem{AKComplete}
{\scshape Ahlswede, R., and Khachatrian, L.~H.}
\newblock The complete intersection theorem for systems of finite sets.
\newblock {\em European journal of combinatorics 18}, 2 (1997), 125--136.

\bibitem{AKPushPull}
{\scshape Ahlswede, R., and Khachatrian, L.~H.}
\newblock A pushing-pulling method: New proofs of intersection theorems.
\newblock {\em Combinatorica (Budapest. 1981) 19}, 1 (1999), 1--15.

\bibitem{Balogh}
{\scshape Balogh, J., Das, S., Liu, H., Sharifzadeh, M., and Tran, T.}
\newblock Structure and supersaturation for intersecting families.
\newblock {\em The Electronic journal of combinatorics 26}, 2 (2019).

\bibitem{BernsteinA.J.1967MCAo}
{\scshape Bernstein, A.~J.}
\newblock Maximally connected arrays on the n-cube.
\newblock {\em SIAM journal on applied mathematics 15}, 6 (1967), 1485--1489.

\bibitem{AhlswedeBookBey}
{\scshape Bey, C.}
\newblock Remarks on an edge isoperimetric problem.
\newblock In {\em General Theory of Information Transfer and Combinatorics},
  vol.~4123 of {\em Lecture Notes in Computer Science}. Springer Berlin
  Heidelberg, Berlin, 2006, pp.~971--978.

\bibitem{BezrukovSergei2018Nifo}
{\scshape Bezrukov, S., Bulatovic, P., and Kuzmanovski, N.}
\newblock New infinite family of regular edge-isoperimetric graphs.
\newblock {\em Theoretical computer science 721\/} (2018), 42--53.

\bibitem{BezrukovS.L.2000AEPf}
{\scshape Bezrukov, S., Das, S., and Elsässer, R.}
\newblock An edge-isoperimetric problem for powers of the petersen graph.
\newblock {\em Annals of combinatorics 4}, 2 (2000), 153--169.

\bibitem{BezrukovSergeiL.1999Oaei}
{\scshape Bezrukov, S.~L.}
\newblock On an equivalence in discrete extremal problems.
\newblock {\em Discrete mathematics 203}, 1 (1999), 9--22.

\bibitem{BezrukovSergeiL.2007Opoh}
{\scshape Bezrukov, S.~L., and Battiti, R.}
\newblock On partitioning of hypergraphs.
\newblock {\em Discrete mathematics 307}, 14 (2007), 1737--1753.

\bibitem{BezrukovSergeiL.2003Epfc}
{\scshape Bezrukov, S.~L., and Elsässer, R.}
\newblock Edge-isoperimetric problems for cartesian powers of regular graphs.
\newblock {\em Theoretical computer science 307}, 3 (2003), 473--492.

\bibitem{BezrukovSergeiL.2004Anat}
{\scshape Bezrukov, S.~L., Piotrowski, V.~P., and Pfaff, T.~J.}
\newblock A new approach to macaulay posets.
\newblock {\em Journal of combinatorial theory. Series A 105}, 2 (2004),
  161--184.

\bibitem{Bollobs1991EdgeisoperimetricII}
{\scshape Bollob{\'a}s, B., and Leader, I.}
\newblock Edge-isoperimetric inequalities in the grid.
\newblock {\em Combinatorica 11\/} (1991), 299--314.

\bibitem{BEKneser:2003}
{\scshape Bollobás, B., and Leader, I.}
\newblock Set systems with few disjoint pairs.
\newblock {\em Combinatorica (Budapest. 1981) 23}, 4 (2003), 559--570.

\bibitem{BonnetEdouard2016AnoE}
{\scshape Bonnet, E., and Sikora, F.}
\newblock A note on edge isoperimetric numbers and regular graphs.
\newblock {\em International Journal of Foundations of Computer Science 27}, 06
  (2016), 771--774.

\bibitem{Brown-KramerJoshua2010Fssw}
{\scshape Brown-Kramer, J.}
\newblock Fractional set systems with few disjoint pairs.
\newblock {\em Discrete mathematics 310}, 1 (2010), 115--124.

\bibitem{CarlsonThomasA.2002Tepf}
{\scshape Carlson, T.~A.}
\newblock The edge-isoperimetric problem for discrete tori.
\newblock {\em Discrete mathematics 254}, 1 (2002), 33--49.

\bibitem{ClementsG.F.1969Agoa}
{\scshape Clements, G., and Lindström, B.}
\newblock A generalization of a combinatorial theorem of macaulay.
\newblock {\em Journal of combinatorial theory 7}, 3 (1969), 230--238.

\bibitem{ClementsG.F.1971SoLP}
{\scshape Clements, G.~F.}
\newblock Sets of lattice points which contain a maximal number of edges.
\newblock {\em Proceedings of the American Mathematical Society 27}, 1 (1971),
  13--15.

\bibitem{DGS}
{\scshape Das, S., Gan, W., and Sudakov, B.}
\newblock The minimum number of disjoint pairs in set systems and related
  problems.
\newblock {\em Combinatorica (Budapest. 1981) 36}, 6 (2016), 623--660.

\bibitem{engel_1997}
{\scshape Engel, K.}
\newblock {\em Sperner Theory}.
\newblock Encyclopedia of Mathematics and its Applications. Cambridge
  University Press, 1997.

\bibitem{EKR}
{\scshape Erdős, P., Ko, C., and Rado, R.}
\newblock Intersection theorems for systems of finite sets.
\newblock {\em The Quarterly Journal of Mathematics 12}, 1 (01 1961), 313--320.

\bibitem{HARPERLH1964OAoN}
{\scshape Harper, L.}
\newblock Optimal assignments of numbers to vertices.
\newblock {\em Journal of the Society for Industrial and Applied Mathematics
  12}, 1 (1964), 131--135.

\bibitem{Harper1991OnAP}
{\scshape Harper, L.~H.}
\newblock On a problem of {K}leitman and {W}est.
\newblock {\em Discret. Math. 93\/} (1991), 169--182.

\bibitem{HarperBook}
{\scshape Harper, L. H. L.~H.}
\newblock {\em Global methods for combinatorial isoperimetric problems}.
\newblock Cambridge studies in advanced mathematics ; 90. Cambridge University
  Press, Cambridge ;, 2004.

\bibitem{HartSergiu1976Anot}
{\scshape Hart, S.}
\newblock A note on the edges of the n-cube.
\newblock {\em Discrete mathematics 14}, 2 (1976), 157--163.

\bibitem{Katz1971RearrangementsO}
{\scshape Katz, M.}
\newblock Rearrangements of (0–1) matrices.
\newblock {\em Israel Journal of Mathematics 9\/} (1971), 53--72.

\bibitem{KeoughLauren2016Gwtf}
{\scshape Keough, L., and Radcliffe, A.~J.}
\newblock Graphs with the fewest matchings.
\newblock {\em Combinatorica (Budapest. 1981) 36}, 6 (2016), 703--723.

\bibitem{Kletiman}
{\scshape Kleitman, D.~J., Krieger, M.~M., and Rothschild, B.~L.}
\newblock Configurations maximizing the number of pairs of hamming-adjacent
  lattice points.
\newblock {\em Studies in Applied Mathematics 50}, 2 (1971), 115--119.

\bibitem{LINDSEYJH1964AoNt}
{\scshape Lindsey, J.}
\newblock Assignment of numbers to vertices.
\newblock {\em The American mathematical monthly 71}, 5 (1964), 508--516.

\end{thebibliography}
\end{document}